\documentclass[oneside,reqno]{amsart}

\usepackage{Macros}
\usepackage{physics}
\usepackage{mathtools}
\usepackage{appendix}
\usepackage{bm}
\usepackage{bbm}
\usepackage{mathrsfs}

\usepackage{hyperref}

\setcounter{tocdepth}{1}

\usepackage{parskip}

\newtheorem{proposition}{Proposition}
\newtheorem{theorem}[proposition]{Theorem}
\newtheorem{lemma}[proposition]{Lemma}

\theoremstyle{remark}

\theoremstyle{definition}
\newtheorem{definition}[proposition]{Definition}

\numberwithin{equation}{section}
\numberwithin{proposition}{section}
\numberwithin{table}{section}

\renewcommand{\epsilon}{\varepsilon}
\DeclareMathOperator*{\argmin}{arg \, min}

\DeclareMathOperator{\diff}{d}
\DeclareMathOperator{\Lip}{Lip}
\DeclareMathOperator{\loc}{loc}
\DeclareMathOperator{\AC}{AC}
\DeclareMathOperator{\Prox}{Prox}
\DeclareMathOperator{\id}{id}

\newcommand{\scr}[1]{\mathscr{#1}}

\title{Li-Yau-Hamilton Inequality on the JKO Scheme for the Granular-Medium Equation}
\author{Fanch Coudreuse}
\address{Institut Camille Jordan, Lyon 1}

\begin{document}

\begin{abstract}
    We establish a version of the Li--Yau--Hamilton inequality for the Granular-Medium equation on the torus, both at the PDE level and for its time-discrete approximation given by the JKO scheme. We then apply this estimate to derive further quantitative results for the continuous and discrete JKO flows, including Lipschitz and $L^\infty$ bounds, as well as a quantitative Harnack inequality. Finally, we use the regularity provided by this estimate to show that the JKO scheme for the Fokker--Planck equation converges in $L^2_{\mathrm{loc}}((0,+\infty); H^2(\bb{T}^d))$.
\end{abstract}

\maketitle

\tableofcontents

\section{Introduction}

     The goal of this article is to derive Li–Yau–Hamilton type inequalities for the JKO approximation to the Granular–Medium equation
\[
    \partial_t \rho_t = \Delta \rho_t + \nabla \cdot (\rho_t \nabla V + \rho_t \nabla W * \rho_t)
\]
with $C^{2,1}(\bb{T}^d)$ potentials, on the torus $\bb{T}^d$. At the same time, we derive a new, so far as we know, Li–Yau–Hamilton inequality also at the level of the continuous-time solution. As a by-product, we show how to use this inequality to derive quantitative estimates for the solution, and we shall prove a quantitative version of the Harnack inequality, both at the continuous and JKO levels, and show that one can use these estimates to prove strong, local-in-time convergence of the JKO scheme.

Li–Yau type inequalities are fundamental estimates in the theory of the heat equation, and more generally in the study of diffusion equations. Originally proved by P. Li and S. Yau in \cite{Li-Yau}, they state that positive solutions to the heat equation on Riemannian manifolds $M^d$ with non-negative Ricci curvature satisfy the pointwise bound $\Delta \log \rho_t \geq -\frac{d}{2 t}$. Later on, this inequality was improved by Hamilton, under more stringent assumptions on the geometry of $M^d$ to a full Hessian estimate by Hamilton in \cite{Hamilton_Matrix_Ineq}, taking the form $D^2 \log \rho_t \succeq -\frac{1}{2t}$. This powerful estimate allows one to derive quantitative versions of the Harnack inequality, as well as other quantitative and qualitative properties of the heat equation. Since then, generalizations of this estimate to other types of diffusions have attracted great attention from various communities, including Markov diffusion operators using logarithmic type inequalities, first by D. Bakry and M. Ledoux \cite{BakLed}, then extended by D. Bakry, F. Bolley and I. Gentil in \cite{LiYauCD}, and PDEs. Let us mention for instance the Aronson-Bénilan estimate for the porous-medium equation \cite{AronsonBenilan}. 

It is nowadays well understood, since the fundamental work of Jordan, Kinderlehrer, and Otto \cite{JKO}, that the heat equation is the gradient flow of the Boltzmann entropy in Wasserstein space, and that one can use a time-discrete implicit-Euler-type scheme to approximate solutions of this equation. This scheme, popularised under the name of the JKO scheme, can be applied to various functionals on the space of probability measures, and is based on the iterative minimisation
\[
    \rho_{k+1}^\tau \in \argmin \cl{F} + \frac{1}{2 \tau} W_2^2(\cdot,\rho_k^\tau)
\]
Without stringent assumptions on $\cl{F}$ the convergence of this scheme is typically weak. A popular strategy to improve convergence, and to show the robustness of the scheme, is to prove that well-known estimates which hold for the continuous equation also hold at the level of the JKO scheme. See for instance \cite{Lee_Lipschitz} \cite{Lip_JKO_Filippo} \cite{Caillet_Continuity} \cite{L2H2} \cite{LpSobo}. 

The Li–Yau–Hamilton inequality is then a natural choice of such an estimate one would like to obtain at the level of the JKO scheme. A first step in this direction was obtained by P. W. Y. Lee in 2018 in \cite{Lee_Harnack}: on the torus, starting from $C^{2,\alpha}(\bb{T}^d)$ and strictly positive initial data, he showed that an estimate of the form $D^2 \log \rho_t^\tau \succeq -\frac{C}{2 t}$ for all $\tau \leq \tau_0$, with $\tau_0$ depending on the initial data, $C \in (1/2,1]$ is some universal constant and $(\rho_t^\tau)_{t \geq 0}$ is the piecewise-constant interpolation, with step $\tau$ of the values obtained from the JKO scheme.

This is the first hint that the full Li-Yau-Hamilton estimate might be recovered for the JKO scheme. At least three directions of improvement can be listed: Firstly, in the continuous case, the estimate holds independently on the regularity of the initial data, in contrast with Lee's result where $\tau_0$ blows up as $\rho_0$ becomes less and less regular. Secondly, one would hope to recover the optimal constant $1/2$, at least asymptotically, in the sense that one may hope to obtain a constant $C_\tau$ going to $1/2$ as $\tau \to 0$. The final possible improvement is when the initial data is already regular, in which case the Li-Yau-Hamilton can be slightly improved to an inequality valid up to time $t=0$. 

Another direction is to try to extend the proof to other types of equations, for instance the Fokker–Planck equation. To obtain such a result, one should at least be able to prove it for the continuous-time equation. Unfortunately, if one mimics the classical maximum-principle argument for $D^2(\log \rho_t + V)$ (as $\log \rho_t + V$ is the natural pressure associated to the equation), then one ends up with gradient terms that are hard to control. This can be resolved by looking instead at the quantity $D^2(\log \rho_t + \frac{1}{2} V)$ but it demands that $V$ is convex, and that a fourth-order quantity involving $V$ is bounded from below, we refer to \cite{LeeDiffHarnack} for more about such estimates, see also \cite{ChaConfEich} for a deeper study on this direction. On the Torus unfortunately, such assumptions cannot hold, as there is no non-constant convex function. On the other hand, the fundamental, but elementary, observation, in this case, gradients terms can be controlled using the semi-convexity:

\begin{lemma}[Gradient Estimate for Semi-Convex Periodic Function] \label{lemma: gradient_estimate}
    Let $u : \bb{R}^d \to \bb{R}$ be such that $D^2 u \succeq -\lambda \rm{I}_d$ in the weak-sense, with $\lambda \geq 0$ (i.e. $u$ is $-\lambda$-convex). Then one has $||\nabla u(x)||_\infty \leq \frac{1}{2} \lambda$ for any $i=1,\ldots,d$ (where $\nabla u(x)$ is understood as any element of $\partial u(x)$, and $||v||_\infty = \max_i |v_i|$ is the max norm of a vector $v$). 
\end{lemma}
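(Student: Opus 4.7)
My plan is to derive the bound directly from the semi-convex subgradient inequality, evaluated at two well-chosen periodic translates. First I would recast the weak inequality $D^2 u \succeq -\lambda I_d$ as convexity of $u + \frac{\lambda}{2}|\cdot|^2$, which is equivalent to the classical pointwise semi-convex inequality
\[
    u(y) \geq u(x) + \langle p, y - x\rangle - \frac{\lambda}{2}|y-x|^2, \quad \forall y \in \mathbb{R}^d,
\]
for every $x$ and every $p \in \partial u(x)$. This is the only place where some interpretive work is needed, translating a weak Hessian bound into a subgradient inequality; it is textbook material for semi-convex functions.

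The next step is to exploit the $\mathbb{Z}^d$-periodicity of $u$ (implicit from the title and the ambient torus setting) by plugging in $y = x + e_i$ and $y = x - e_i$, where $e_i$ is the $i$-th standard basis vector. Periodicity forces $u(x \pm e_i) = u(x)$ on the left, while the right-hand side contributes $\pm p_i - \lambda/2$. The two inequalities collapse to
\[
    -\frac{\lambda}{2} \leq p_i \leq \frac{\lambda}{2},
\]
hence $|p_i| \leq \lambda/2$ for each $i$. Maximizing over $i$ and over $p \in \partial u(x)$ gives $\|\nabla u(x)\|_\infty \leq \lambda/2$, as claimed.

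I do not anticipate any real obstacle. The only point worth flagging is that the sharp constant $1/2$ reflects the unit length of the minimal generators of $\mathbb{Z}^d$: the quadratic penalty contributes exactly $\frac{\lambda}{2}|e_i|^2 = \lambda/2$. An alternative, more cumbersome route would go through mollification together with the identity $\int_0^1 \partial_i u(x+te_i)\,dt = 0$ and the $-\lambda$-monotonicity of $t \mapsto \partial_i u(x + t e_i)$; it yields the same constant but is unnecessarily heavy compared to the direct two-translate argument above.
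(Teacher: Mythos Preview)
Your proposal is correct and follows essentially the same argument as the paper: both use the semi-convex subgradient inequality at the translates $y = x \pm e_i$ and invoke $\bb{Z}^d$-periodicity to cancel $u(x\pm e_i)-u(x)$, yielding $|p_i|\le \lambda/2$. The only cosmetic difference is that you spell out the equivalence between the weak Hessian bound and the subgradient inequality a bit more explicitly.
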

    
\begin{proof}
    Working with $-u$ instead of $u$ we only need to consider the semi-convex case. Let $\nabla u(x) \in \partial u(x)$, then by semi-convexity one has for any $y \in \bb{R}^d$
    \begin{align}
        u(y) \geq u(x) + \nabla u(x) \cdot (y-x) - \frac{\lambda}{2} |x-y|^2 
    \end{align}
    applying this inequality to $x+\pm e_i$ for $i=1,\ldots,d$, and $e_i$ the $i$-th element of the canonical basis yields
    \begin{align}
        \pm \partial_i u(x) \leq u(x\pm e_i) - u(x) + \frac{\lambda}{2} = \frac{\lambda}{2}
    \end{align}
    where we used periodicity of $u$, which concludes the proof. 
\end{proof}

This lemma is going to be the main tool to extend the Li-Yau-Hamilton estimate for the Fokker-Planck, but also for the Granular-Medium equation, on the torus, both at the continuous and JKO level. 

\subsection{Main results}
    
    Before presenting the main result, we shall introduce our assumptions. In this paper, we shall consider two potentials $V,W$ which will always be at least of class $C^{2,1}(\bb{T}^d)$ (and sometimes more regular). We shall quantify this $C^{2,1}$ regularity $V,W$ using the following constants:
    \begin{itemize}
        \item \emph{Semi-Convexity bound: } We have $D^2 V \succeq -\lambda_V$ and $D^2 W \succeq -\lambda_W$ for some $\lambda_V,\lambda_W \geq 0$. We shall also set $\lambda^* := \lambda_V + \lambda_W$. 
        \item \emph{Lipschitz bound: } For all $\nu \in \bb{S}^d$, we have $|\nabla \partial_{\nu \nu} V|_1 \leq L_V$ and $|\nabla  \partial_{\nu \nu} W|_1 \leq L_W$ with $L_V,L_W \geq 0$. We shall also set $L^* := \frac{1}{2} L_V + L_W$. 
    \end{itemize}
    Finally, we shall make use of the following constant, combining the semi-convexity and Lipschitz behaviour:
    \begin{equation} \label{eq: Lambda_Star}
       \Lambda = 2 \lambda^* + L^*
    \end{equation}

    Our main result is an asymptotic version of the Li-Yau-Hamilton inequality for the Granular-Medium equation on the torus. Under $C^{2,1}$ assumptions on the potentials $V$ and $W$. Informally, it takes the following form:

    \begin{theorem}[Asymptotic Li-Yau-Hamilton estimate]
        Let $(\rho_t^\tau)_{t \geq 0}$ be a JKO flow starting from $\rho_0$. Then for all $t_0 > 0$, and $t \geq t_0$ one has
        \[
            D^2 (\log \rho_t^\tau + V + W * \rho_t^\tau) \geq\left \{ \begin{array}{ll} -\frac{1+o(\tau)}{2 t} & \mbox{if $\Lambda = 0$} \\ -\frac{(1+o(\tau))\Lambda}{2 (1-e^{-\Lambda t})} & \mbox{else} \end{array}  \right . 
        \]
    \end{theorem}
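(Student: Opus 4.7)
The natural pressure for the granular-medium equation is $P := \log \rho + V + W*\rho$, since the PDE may be rewritten as $\partial_t \rho = \nabla\cdot(\rho\nabla P)$. My plan is to adapt Hamilton's matrix maximum principle for $D^2 P$ to the JKO setting, replacing the continuous-time evolution by a one-step Monge--Ampère identity, and using Lemma~\ref{lemma: gradient_estimate} to absorb the gradient terms that would otherwise be uncontrolled on the torus. The Euler--Lagrange equation for the JKO step gives $\nabla P_{k+1}^\tau = -\nabla\varphi/\tau$ with $\varphi$ the Kantorovich potential between $\rho_{k+1}^\tau$ and $\rho_k^\tau$, so the Brenier map admits the explicit form $T_k(x) = x - \tau \nabla P_{k+1}^\tau(x)$, and Monge--Ampère produces the fundamental one-step identity
\begin{equation*}
    P_{k+1}^\tau(x) - P_k^\tau(T_k(x)) \;=\; V(x) - V(T_k(x)) + W*\rho_{k+1}^\tau(x) - W*\rho_k^\tau(T_k(x)) + \log\det\bigl(I - \tau D^2 P_{k+1}^\tau(x)\bigr).
\end{equation*}

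Let $\mu_{k+1}$ denote the minimum eigenvalue of $D^2 P_{k+1}^\tau$ on the torus, attained at some point $x^*$ in direction $\nu^*$. Differentiating the identity twice in the direction $\nu^*$ and evaluating at $x^*$, the first-order condition $\nabla \partial_{\nu^*\nu^*} P_{k+1}^\tau(x^*) = 0$ removes the nastiest third-derivative contributions from the $\log\det$ expansion and from the chain rule applied to $P_k^\tau \circ T_k$, while the second-order condition $D^2 \partial_{\nu^*\nu^*} P_{k+1}^\tau(x^*) \succeq 0$, together with the positive definiteness of $I - \tau D^2 P_{k+1}^\tau(x^*)$ for $\tau$ small, forces the remaining $\log\det$ contribution to have the right sign. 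After regrouping, the contributions of $D^2 V$ and $D^2(W*\rho)$ are controlled by $\lambda^*$ and $L^*$, and the mixed terms of the form $\partial_{\nu^*} \nabla P_{k+1}^\tau$ that appear when expanding $V(T_k(x))$ or $W*\rho_k^\tau(T_k(x))$ are controlled through Lemma~\ref{lemma: gradient_estimate} applied to the semi-convex periodic function $P_{k+1}^\tau$. The outcome should be a scalar recursion of the form
\begin{equation*}
    \mu_{k+1}\bigl(1 - 2\tau \mu_{k+1}\bigr) \;\geq\; \mu_k - \tau\bigl(\Lambda |\mu_{k+1}| + O(\tau)\bigr).
\end{equation*}

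This recursion is precisely the implicit Euler scheme for the ODE $\dot u = -2 u^2 - \Lambda u$, whose maximal solution starting from $u(0^+) = -\infty$ is $u(t) = -\Lambda/(2(1 - e^{-\Lambda t}))$, and $u(t) = -1/(2t)$ when $\Lambda = 0$. A discrete comparison between the JKO sequence $\mu_k$ and the implicit Euler iterates of this ODE yields the claimed asymptotic bound for $t \geq t_0$, the $o(\tau)$ correction tracing back to the $O(\tau)$ term in the recursion. I expect the main obstacle to be the treatment of the non-local contribution $\partial_{\nu^*\nu^*}\bigl[W*\rho_k^\tau \circ T_k\bigr]$: after the chain rule it involves $D^2(W*\rho_k^\tau)$ contracted with $DT_k$ as well as $D^2 T_k = -\tau D^3 P_{k+1}^\tau$, and only a careful combination of the semi-convexity bound $\lambda_W$, the Lipschitz bound $L_W$ on $D^2 W$, and the a priori $L^\infty$ control on $\nabla P_{k+1}^\tau$ supplied by Lemma~\ref{lemma: gradient_estimate} seems to close the estimate. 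A secondary delicate point is recovering the sharp asymptotic constant $1 + o(\tau)$, rather than the suboptimal $C \in (1/2, 1]$ of Lee's earlier result: this requires pushing the Taylor expansion of $\log\det(I - \tau D^2 P_{k+1}^\tau)$ to second order in $\tau$ and retaining the exact quadratic contribution $\tfrac{\tau^2}{2}\operatorname{tr}\bigl((D^2 P_{k+1}^\tau)^2\bigr)$ at the minimizing point.
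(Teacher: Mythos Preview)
Your overall strategy — derive a one-step recursion from the Monge--Amp\`ere identity by a matrix maximum principle, then compare with an ODE — is exactly the paper's, but the execution has a genuine gap that prevents the recursion from closing.

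First a minor point: the Euler--Lagrange relation is $\nabla P_{k+1}^\tau = -\nabla\psi/\tau$ with $\psi$ the potential from $\rho_{k+1}^\tau$ to $\rho_k^\tau$, so the Brenier map is $T_k = \id - \nabla\psi = \id + \tau\nabla P_{k+1}^\tau$, not $\id - \tau\nabla P_{k+1}^\tau$. With the corrected sign, $DT_k = I + \tau D^2 P_{k+1}^\tau$.

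The real obstacle is the following. Differentiating your identity twice in $\nu^*$ at the minimum $x^*$ of $\partial_{\nu^*\nu^*}P_{k+1}^\tau$ gives, for the $\log\det$ contribution,
\[
    \partial_{\nu^*\nu^*}\log\det(I+\tau H)\;=\;\tau\,\Tr\bigl[(I+\tau H)^{-1}H_{\nu^*\nu^*}\bigr]\;-\;\tau^2\,\Tr\bigl[((I+\tau H)^{-1}H_{\nu^*})^2\bigr],
\]
with $H=D^2 P_{k+1}^\tau(x^*)$. The second-order condition $H_{\nu^*\nu^*}\succeq 0$ makes the first term non-negative, which you can safely discard; but the second term is non-positive, and when you move it to the other side of the recursion it becomes the \emph{positive} quantity $\tau^2\,\Tr\bigl[((I+\tau H)^{-1}H_{\nu^*})^2\bigr]$. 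This involves third derivatives of $P_{k+1}^\tau$, for which you have no a priori control, and Lemma~\ref{lemma: gradient_estimate} does not help here. The recursion therefore does not close in the form you wrote. The paper sidesteps this by differentiating the Monge--Amp\`ere equation \emph{from the other side}: it writes $\log\det D^2 v = \log\eta - \log\rho(\nabla v)$ for the inverse map $\nabla v:\rho_k^\tau\to\rho_{k+1}^\tau$ and works at the \emph{maximum} of $D^2 v[\nu,\nu]$. There, both terms in $\partial_{\nu\nu}\log\det D^2 v$ carry the right sign and can be dropped, yielding an exact inequality $G[\tau\lambda_1,\tau]\le\tau\lambda_0$ with no third-derivative remainder.

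Finally, your plan for recovering the sharp $1+o(\tau)$ constant via a second-order Taylor expansion of $\log\det(I-\tau D^2 P_{k+1})$ is not how the paper obtains it: the quadratic contribution that drives the improvement comes from the exact factor $M^2=(1-\tau\lambda_1)^{-2}$ relating $D^2 v$ to $D^2 u[\rho]$, not from any $\tau$-expansion. The paper keeps the exact function $G$, builds an exact comparison sequence $(E_k^\tau)$, and only at the very end studies its asymptotics as $\tau\to 0$; an $O(\tau)$ error at the level of the recursion would be much harder to propagate to a global $o(\tau)$.
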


    See Section 3 theorem \ref{thm: Asymptotic_Li_Yau_Hamilton_Estimate} for the precise statement. In fact we show that, under regularity assumptions on the initial data, this estimate can be improved up to time $0$. This is an improvement of Lee's result on the four directions we explained before. As a by-product of this result, letting $\tau \to 0$, we obtain a version of the Li-Yau-Hamilton estimate for the Granular-Medium equation on the torus. We shall nevertheless provide a direct proof of this estimate on the continuous level for two reasons: First we believe that it might be of interest to use the same method for similar equations that does not admits a Gradient flow structure. Second the estimate is easier to obtain in the continuous setting, and the proof gives a hint on the computations one want to mimic at the discrete level. We shall note however that it was easier to see how to handle the addition of the interaction term at the discrete level, and it was only after that we found a proof at the continuous level, in contrast with what happen usually when dealing with the JKO scheme. 

    \begin{theorem}[Li-Yau-Hamilton estimate]
        Let $\rho$ be a Gradient flow solution of the Granular-Medium equation (see \ref{thm: gradient_flow_solution} for the relevant definition). Then for all $t > 0$ one has
        \[
            D^2 (\log \rho_t + V + W * \rho_t) \geq -\left \{ \begin{array}{ll} \frac{1}{2 t} & \mbox{if $\Lambda = 0$} \\ \frac{\Lambda}{2 (1-e^{-\Lambda t})} & \mbox{else} \end{array}  \right .
        \]
    \end{theorem}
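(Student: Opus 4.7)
The plan is to apply a parabolic maximum principle to the Hessian of the pressure $u_t := \log \rho_t + V + W * \rho_t$; the key new ingredient, compared with the classical Li--Yau--Hamilton argument, is to exploit Lemma~\ref{lemma: gradient_estimate}, which converts the semi-convexity of $u$ into an $L^\infty$ control on $\nabla u$ and thereby absorbs all the error terms coming from $V$, $W * \rho$ and the non-local drift.

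I would first rewrite the PDE as the continuity equation $\partial_t \rho_t = \nabla \cdot (\rho_t \nabla u_t)$ and compute the evolution of the pressure,
\[
    \partial_t u = \Delta u + |\nabla u|^2 - \nabla u \cdot (\nabla V + \nabla W * \rho) + W * \nabla \cdot (\rho \nabla u).
\]
Differentiating this identity twice in a fixed unit direction $\nu$ produces an evolution equation for $h_\nu := \partial_{\nu \nu} u$ whose leading terms are $\Delta h_\nu + 2 \nabla u \cdot \nabla h_\nu + 2 |\nabla \partial_\nu u|^2$, together with auxiliary terms built from $\partial_{\nu \nu} V$, $\partial_{\nu \nu}(W * \rho)$, $\nabla \partial_{\nu \nu} V$, $\nabla \partial_{\nu \nu}(W * \rho)$, and the non-local contribution $\partial_{\nu \nu}(W * \partial_t \rho)$; the last is rewritten, through integration by parts on $\bb{T}^d$, as $\int \nabla \partial_{\nu \nu} W(x - y) \cdot \rho(y) \nabla u(y) \, dy$, which is exactly the form in which the constant $L_W$ naturally appears.

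Setting $m(t) := \inf_{x \in \bb{T}^d,\, \nu \in \bb{S}^{d-1}} \partial_{\nu \nu} u_t(x)$, I would then apply the parabolic maximum principle at a triple $(t_0, x_0, \nu_0)$ realising the infimum: there $\nabla h_\nu = 0$, $\Delta h_\nu \geq 0$, and $\nu_0$ is an eigenvector of $D^2 u_{t_0}(x_0)$ with eigenvalue $m(t_0)$, so $\nabla \partial_{\nu_0} u = m(t_0)\, \nu_0$ and $|\nabla \partial_{\nu_0} u|^2 = m(t_0)^2$. Terms involving $\partial_{\nu \nu} V$ and $\partial_{\nu \nu}(W * \rho)$ are lower-bounded by $-\lambda_V$ and $-\lambda_W$ respectively; terms of the form $\nabla \partial_{\nu \nu} V \cdot \nabla u$, $(\nabla \partial_{\nu \nu} W * \rho) \cdot \nabla u$, and the non-local contribution are controlled by pairing $L_V$, $L_W$ against $\|\nabla u\|_\infty$. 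The decisive observation is that $u$ is periodic and $(-m(t))$-semi-convex by the very definition of $m$, so Lemma~\ref{lemma: gradient_estimate} gives $\|\nabla u\|_\infty \leq -m(t)/2$ whenever $m(t) \leq 0$. Collecting every estimate leads to the clean differential inequality
\[
    m'(t) \geq 2\, m(t)^2 + \Lambda\, m(t), \qquad m(t) \leq 0.
\]

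It remains to compare $m$ with $y := -\phi$, where $\phi$ is the function in the statement: a direct substitution shows that $y$ satisfies $y' = 2 y^2 + \Lambda y$ with equality, so the classical ODE comparison principle gives $m(t) \geq y(t) = -\phi(t)$, the comparison being trivially initialised by the blow-up $\phi(t) \to +\infty$ as $t \to 0^+$. The main obstacle I anticipate is making the envelope-theorem step rigorous: one must show that $m$ is absolutely continuous in $t$ and that the above inequality holds almost everywhere. This should be handled by first establishing the estimate for smooth, strictly positive initial data, where parabolic regularity of the Granular-Medium equation produces a classical solution on which the maximum principle argument applies directly, and then recovering the general gradient-flow case by approximation and the stability of semi-convexity bounds under the relevant weak limits.
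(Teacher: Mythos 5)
Your proposal is correct and follows essentially the same route as the paper's own smooth-case argument: derive the nonlocal Hamilton--Jacobi equation satisfied by the pressure $u_t$, differentiate twice in a fixed direction, apply the parabolic maximum principle at an extremal $(t,x,\nu)$, absorb the potential and interaction error terms via the semi-convexity-to-Lipschitz estimate of Lemma~\ref{lemma: gradient_estimate}, and close with a logistic Gr\"onwall/ODE comparison; the paper likewise postpones the rigorous envelope-theorem step to the smooth setting and then passes to general initial data by stability. The only cosmetic difference is that you keep the term $W * \nabla\cdot(\rho\nabla u)$ explicit, while the paper packages it (together with $-\nabla W*\rho\cdot\nabla u$) into the single nonlocal remainder $\cl{R}(\nabla W,\nabla u_t,\rho_t)$, but after integration by parts the two forms coincide exactly.
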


    To our knowledge, this version of the Li-Yau-Hamilton inequality is new. 
    
    As for the classical heat equation, it can be used to derive quantitative estimates for solutions of the Granular-Medium equation. We shall present three of them: a Lipschitz and $L^\infty$ bound, and a quantitative Harnack inequality.

    Similarly, the discrete version of the estimate can be used to derive Lipschitz, $L^\infty$ and Harnack estimates for the discrete flow, uniform in $\tau$. We shall then use these results to improve the convergence of the JKO scheme, locally in time (the first convergence being an almost immediate consequence of the uniform Lipschitz estimate). 

    \begin{theorem}
        Let $\rho_0$ be such that $\cl{F}[\rho_0] < +\infty$. Then:
        \begin{enumerate}
            \item For all $1 \leq p < +\infty$ and $\alpha \in (0,1)$, $(\rho^\tau_t)_{t \geq 0}$ converges to a solution to the Granular-Medium equation starting from $\rho_0$ in $L^p_{\loc}((0,+\infty);C^{0,\alpha}(\bb{T}^d))$.
            \item If we also assume that $W = 0$ (i.e. we work with the Fokker-Planck equation). Then one has also convergence in $L^2_{\loc}((0,+\infty);H^2(\bb{T}^d))$. 
        \end{enumerate}
    \end{theorem}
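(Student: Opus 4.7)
The strategy is to exploit the uniform-in-$\tau$ regularity coming from the asymptotic Li--Yau--Hamilton estimate announced above to upgrade the weak convergence of the JKO scheme into the stated strong convergences.

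\emph{Uniform regularity on $[t_0,T]$.} Fix $[t_0,T] \subset (0,+\infty)$. For $\tau$ small enough, Theorem \ref{thm: Asymptotic_Li_Yau_Hamilton_Estimate} provides a uniform semi-convexity bound on the discrete pressure $u^\tau_t := \log \rho^\tau_t + V + W*\rho^\tau_t$. Lemma \ref{lemma: gradient_estimate} then turns this into a uniform $L^\infty$ bound on $\nabla u^\tau_t$, hence on $\nabla \log \rho^\tau_t$ (since $V$ and $W$ are $C^{2,1}$ and $\rho^\tau_t$ is a probability density). Combined with the announced uniform Harnack inequality and conservation of mass, this yields uniform two-sided bounds $0 < c_{t_0} \le \rho^\tau_t \le C_{t_0}$ on $[t_0,T] \times \bb{T}^d$, and therefore a uniform Lipschitz bound on $\rho^\tau_t$ itself.

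\emph{Part $(1)$: $L^p_{\loc}(C^{0,\alpha})$ convergence.} Standard JKO theory provides narrow convergence $\rho^\tau_t \rightharpoonup \rho_t$, uniformly in $t$ on compact subintervals of $(0,+\infty)$, to the gradient-flow solution starting from $\rho_0$. Applying Ascoli--Arzelà to the uniform $C^{0,1}$-bound above, I obtain precompactness of $\{\rho^\tau_t\}_\tau$ in $C^{0,\alpha}(\bb{T}^d)$ for every $\alpha \in (0,1)$. Identifying the weak and strong limits promotes this to $\rho^\tau_t \to \rho_t$ in $C^{0,\alpha}$ for each $t$, and the uniform $C^{0,1}$-majorant upgrades pointwise-in-$t$ convergence to $L^p([t_0,T];C^{0,\alpha}(\bb{T}^d))$ convergence for every $p<\infty$ by dominated convergence.

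\emph{Part $(2)$: $L^2_{\loc}(H^2)$ when $W=0$.} For Fokker--Planck I would use the Fisher-information dissipation inherent to the JKO scheme, starting from time $t_0$ at which the Fisher information is already finite thanks to the uniform regularity above, to obtain a uniform bound
\[
\int_{t_0}^T \!\!\int_{\bb{T}^d} \rho^\tau_t \,|D^2(\log \rho^\tau_t + V)|^2 \,\diff x\,\diff t \le C(t_0,T).
\]
Since $\rho^\tau_t \ge c_{t_0}>0$, this gives a uniform $L^2([t_0,T]\times\bb{T}^d)$ bound on $D^2 \log \rho^\tau_t$, and then on $D^2 \rho^\tau_t$ via the chain rule together with the Lipschitz control from Step 1. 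Weak-$H^2$ compactness combined with the strong $C^{0,\alpha}$ convergence of Part $(1)$ identifies the limit; upgrading to \emph{strong} $L^2(H^2)$ convergence would follow by passing to the limit in the discrete entropy/Fisher-information identity to show convergence of the squared $H^2$-norms.

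\emph{Main obstacle.} I expect the delicate part to be Part $(2)$: the JKO scheme only provides an \emph{inequality} version of the Fisher-information dissipation, which a priori gives only weak convergence in $H^2$. Promoting this to strong convergence requires proving that the dissipation inequality is asymptotically saturated, equivalently that the discrete entropy decay converges to the continuous one; this should follow from the strong $C^{0,\alpha}$ convergence of Part $(1)$ and lower semicontinuity of the relative Fisher information, but the bookkeeping between the discrete times $k\tau$ and continuous-time integrals -- likely through an appropriate interpolating curve -- is where the technical work lies.
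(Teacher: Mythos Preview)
Your Part $(1)$ is correct and in fact takes a cleaner route than the paper. The paper invokes the piecewise-constant Aubin--Lions lemma (Theorem \ref{thm: Aubin_Lions}), which requires a separate time-difference estimate $\tau^{-1}\|\rho^\tau - \rho^\tau(\cdot-\tau)\|_{L^1(Y)} \le C$ coming from the $W_1$-displacement bound of the scheme. Your argument bypasses this: you use the pointwise-in-$t$ $\bb{W}_2$-convergence already provided by Theorem \ref{thm: JKO_converges}, combine it with Ascoli--Arzel\`a at each fixed $t$ to identify the limit in $C^{0,\alpha}$, and then apply dominated convergence in the time variable. This is more elementary and avoids Aubin--Lions altogether; the paper's route has the advantage of also yielding a.e.\ convergence of gradients, but that is not part of the stated theorem.

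For Part $(2)$ your plan is in the right spirit but you are reinventing the content of \cite{L2H2}. The paper does not redo the Fisher-dissipation-to-strong-$H^2$ argument; it cites \cite{L2H2} as a black box and observes that its proof goes through on $[t_0,T]$ once three hypotheses are verified at time $t_0$: finite energy, uniform Lipschitz and two-sided bounds on $\rho^\tau_{t_0}$ (which you have), and \emph{convergence of the Fisher information} $\int |\nabla(\log\rho^\tau_{t_0}+V)|^2\,\diff\rho^\tau_{t_0}$ to its continuous counterpart. The paper then dispatches this last point in a few lines: the uniform semi-convexity bound on $\log\rho^\tau_{t_0}+V$ from Theorem \ref{thm: Asymptotic_Li_Yau_Hamilton_Estimate} ensures that gradients converge a.e.\ (subgradients of uniformly converging semi-convex functions converge), and dominated convergence finishes the job. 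This is the key step you are missing --- once you have Fisher-info convergence at $t_0$, the ``saturation of the dissipation inequality'' you flag as the main obstacle is precisely what \cite{L2H2} already proves, so you need not redo it.
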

    
\subsection{Organization of the paper}

    As explained in the introduction, we chose to still keep an (almost) self-contained proof of the estimate in the continuous time case. As such, the paper is divided into two parts: Section $2$ deals with the continuous time case, and Sections $4,5$ with the discrete time case. Some of the proofs are postponed to the appendix, as they are merely technical and do not involve particularly nice ideas. The precise organization is the following one:

    \begin{itemize}
        \item In Section $2$ we prove the Li-Yau-Hamilton estimate for the Granular-Medium equation. We then proceed to use this inequality to prove quantitative Lipschitz and $L^\infty$ estimates for the solution. Finally, mimicking the classical proof, we show a quantitative Harnack inequality for solutions to the Granular-Medium equation. 
        
        \item In Section $3$ we collect the relevant basic tools from the theory of optimal transport and basics results about the JKO scheme. 
        
        \item In Section $4$ we prove the asymptotic Li-Yau-Hamilton estimate for the JKO scheme. A big part of the proof is merely technical and is postponed to Appendix $A$ and $B$. 
        
        \item In Section $5$ we use the estimate to derive estimates on the discrete case, analogue to the continuous time estimate: Lipschitz and $L^\infty$ bounds, together with a quantitative Harnack inequality. We then use these estimates to derive the local in time strong convergence of the scheme. 
        
        \item In Appendix $A$ we show how to prove rigorously the one-step-improvement \ref{thm: One-Step_Improvement} of semi-convexity used in Section $4$ when the initial data is irregular.
        
        \item In Appendix $B$ we prove the asymptotic estimates used in Section $4$. 
    \end{itemize}
    
\subsection{Notations and Conventions}
    In the rest of the paper, we shall adopt the following notations and conventions: 
    
    \begin{itemize}
        \item For $x \in \bb{R}^d$, we let $[x]$ be the unique representative of $x$ mod $\bb{Z}^d$ on the cube $Q = [-1/2,1/2)^d$. We shall denote by $d(x,y)^2$ the metric of the torus, such that if $x,y \in \bb{R}^d$, one can write $d(x,y)^2 = |[x-y]|^2$. Note that whenever $|x-y|_\infty < 1/2$, then $d(x,y)^2 = |x-y|^2$. 
    
        \item We shall not make distinctions between class of functions defined on the torus, and the corresponding class of periodic functions on $\bb{R}^d$. For instance, a $C^{k,\alpha}(\bb{T}^d)$ function is the same as a periodic $C^{k,\alpha}(\bb{R}^d)$ function. Similarly, we shall not make distinctions between a probability measure on the torus is the same as a translation invariant positive measure on $\bb{R}^d$ giving mass one to the unit cube.

        \item We shall abuse notations by denoting by $\rho$ a measure, and its density with respect to the Lebesgue measure if it has one. 
        
        \item If $v \in \bb{R}^d$, we let $|v|_1,|v|_2,|v|_\infty$ be respectively the $l^1,l^2$ and $l^\infty$ norms of $v$. If $U : \bb{T}^d \to \bb{R}^d$ is some function, we shall write $|U|_k$ for $\sup_{x \in \bb{T}^d} |U(x)|_k$ where $k = 1,2,\infty$.
        
        \item Spatial derivatives in some direction $\nu \in \bb{S}^d$ will always be denoted by a subscript. On the other hand, we shall reserve the time subscript for the value of some function at time $t$, to emphasis that we interpret solutions as paths valued in some function space. For instance $\rho_{t,\nu}$ is the derivative of $\rho$ in direction $\nu$ evaluated at time $t$, but $\rho_t$ is the value of $\rho$ at time $t$ (seen as some function of x), and $\partial_t \rho_t$ is the time derivative evaluated at $t$.
        
        \item If $A$ is a symmetric matrix, and $\alpha$ a scalar, we write $A \succeq \alpha$ to mean $A \succeq \alpha \rm{I}_d$. Furthermore, if $v \in \bb{R}^d$, we shall write $A[v,v]$ for the quantity $v^T A v$. 
    \end{itemize}

\subsection{Acknowledgment} The author acknowledges the support of the European Union via the ERC AdG
101054420 EYAWKAJKOS. \\
The author would also like to thank Filippo Santambrogio and Ivan Gentil for valuable discussions and feedbacks during this work, as
well as Louis-Pierre Chaintron for pointing out the existence of Hamilton inequality for Fokker-Planck equation in the whole space and Aymeric Baradat for suggesting to look at the asymptotic equivalent of $(E_k)_{k \geq 0}$ in the heat case.

\section{Li-Yau-Hamilton Estimate for The Granular-Medium}

    The Granular-media equation on the torus is the equation
\begin{align} \label{eq: Aggregation-Diffusion} 
    \left \{ \begin{array}{ll} 
        \partial_t \rho_t = \Delta \rho_t + \nabla \cdot (\rho_t \nabla V + \rho_t \nabla W * \rho_t) & \mbox{on $(0,+\infty) \times \bb{T}^d$} \\
        \rho_{t=0} = \rho_0 & \mbox{}
    \end{array} \right .
\end{align}
We shall see $V$ as a potential energy, and $W$ as a potential of interaction. This interpretation comes from the McKean-Vlasov SDE 
\begin{equation}
    \dd{X_t} = -\nabla V(X_t) \dd{t} - \nabla W * \cl{L}[X_t](X_t) \dd{t} + \sqrt{2} \dd{W_t}
\end{equation}
where $\cl{L}[X_t]$ is the law of $X_t$, and $W$ is a standard Brownian motion on the torus. This equation appears in the mean-field regime for a weakly interacting cloud of particles. It is not hard to see using Itö's formula that if $X$ solves the above equation, then the law of $X$ weakly solves the Granular-Media equation.

This family of equations encompasses at least two famous equations:
\begin{itemize}
    \item The Heat equation when $V=W=0$, $\partial_t \rho_t = \Delta \rho_t$.
    \item The Fokker-Planck equation $W=0$, $\partial_t \rho_t = \Delta \rho_t + \nabla \cdot \rho_t \nabla V$.
\end{itemize}
In the following, when we want to restrict to one of those cases, we shall write down "the Heat case" or "the Fokker-Planck case".

This equation is in fact part of the broader family of Aggregation-Diffusion equation, where we replace the Laplacian by the non-linear diffusion term $\Delta \Psi[\rho_t]$ for some function $\Psi$. A popular choice is $\Psi[t] = t^m$ which lead to equation of porous media or fast-diffusion type. For more information on the Aggregation-Diffusion equation, we refer to the introduction to the topic by Gómez-Castro \cite{GuideAggregDiffusion}. For physical derivation of the equation from particle systems one can consult the extensive survey by Chaintron and Diez \cite{PropagChaosI} \cite{PropagChaosII}. 

\subsection{Notion of Solution}

    We shall be concerned with solution arising as gradient flow of the energy 
    \begin{equation}
        \cl{F}[\rho] = \int_{\bb{T}^d} [\log \rho + V + W * \rho] \diff \rho
    \end{equation}
    with respect to the Wasserstein metric on the torus in the sense of Ambrosio, Gigli and Savaré \cite{AGS}. By an adaptation of the corresponding theorem 11.2.8. in the above book (\cite{AGS}) we have the following existence result.

    \begin{theorem}[Gradient-Flow solutions \cite{AGS}] \label{thm: gradient_flow_solution}
        For any $\rho_0 \in \cl{P}(\bb{T}^d)$, there exists a unique gradient flow solution for $\cl{F}$ starting from $\rho_0$. That is a curve
        \[ 
            \rho_{\cdot} \in C([0,+\infty);\bb{W}_2) \cap C^{0,1}_{\loc}((0,+\infty);\bb{W}_2) \cap \AC_2((t_0,T);\bb{W}_2)
        \]
        with $\rho_{|t=0} = \rho_0$ such that
        \begin{enumerate}
            \item For any $t > 0$, $\rho_t \ll \cl{L}^d$ and $\rho_t > 0$ (as a density).
            \item $\nabla \rho_{\cdot} \in L^1_{\loc}((0,+\infty);W^{1,1}(\bb{T}^d))$. 
            \item The Fisher's information is locally square integrable, i.e.
            \[
                t \to \int_{\bb{T}^d} \left | \frac{\nabla \rho_t}{\rho_t} + \nabla V + \nabla W * \rho_t \right |^2 \diff \rho_t \in L^2_{\loc}((0,+\infty))
            \]
            \item $\rho_{\cdot}$ is a weak distributional solution of the granular-media equation, i.e. for all $\psi \in C^2(\bb{T}^d)$ one has
            \[
                \frac{\diff}{\diff t} \int_{\bb{T}^d} \psi \diff \rho_t = \int_{\bb{T}^d} [\Delta \psi - \nabla V \cdot \nabla \psi - \nabla W * \rho_t \cdot \nabla \psi] \diff \rho_t
            \]
        \end{enumerate}
    \end{theorem}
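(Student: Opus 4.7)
The plan is to follow the recipe of Ambrosio-Gigli-Savaré (AGS) Theorem 11.2.8, verifying that the ingredients remain valid when the underlying space is replaced by the torus. The functional $\cl{F}$ splits as $\cl{F} = \cl{H} + \cl{V} + \cl{W}$, where $\cl{H}[\rho] = \int \log\rho \, \diff\rho$ is the Boltzmann entropy, $\cl{V}[\rho] = \int V \diff\rho$, and $\cl{W}[\rho] = \int W*\rho \, \diff\rho$. The three facts to check are lower semicontinuity, coercivity, and $\lambda$-convexity along generalized geodesics for some $\lambda \in \bb{R}$. Lower semicontinuity of $\cl{H}$ with respect to narrow convergence is classical; $\cl{V}$ is continuous because $V$ is continuous on the compact torus; $\cl{W}$ is continuous by the same token since $W$ is continuous and bounded. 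Coercivity is automatic on the torus since $\cl{P}(\bb{T}^d)$ is compact for the narrow topology, so sublevel sets are trivially precompact.

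The main point to verify is geodesic semi-convexity. The entropy $\cl{H}$ is convex along generalized geodesics in $\bb{W}_2(\bb{T}^d)$, exactly as in the Euclidean AGS framework, since McCann's displacement convexity transfers to the torus via lifting geodesics to $\bb{R}^d$ and using the local optimality of the exponential map for small Wasserstein distance. For $\cl{V}$, the assumption $D^2 V \succeq -\lambda_V$ yields that $\cl{V}$ is $-\lambda_V$-convex along (generalized) geodesics. For $\cl{W}$, the convolution structure combined with $D^2 W \succeq -\lambda_W$ gives $-2\lambda_W$-convexity along generalized geodesics (the factor two coming from the double integration in $\int W(x-y)\diff\rho(x)\diff\rho(y)$). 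Thus $\cl{F}$ is $\lambda$-convex along generalized geodesics with $\lambda = -(\lambda_V + 2\lambda_W)$, which is the AGS assumption needed to run the JKO/minimizing movement scheme.

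Once these ingredients are in place, AGS Theorem 11.2.8 (and the accompanying Theorem 11.1.3 / Proposition 10.3.18) produces the unique gradient flow curve $\rho_\cdot \in C([0,+\infty);\bb{W}_2) \cap \AC_2_{\mathrm{loc}}((0,+\infty);\bb{W}_2)$, together with the energy dissipation identity. The local Lipschitz regularity in $\bb{W}_2$ for $t>0$ follows from the standard regularization effect of $\lambda$-convex gradient flows. Absolute continuity and strict positivity of $\rho_t$ for $t>0$ are inherited from the finiteness of the entropy at positive times: the slope estimate forces $\cl{H}[\rho_t] < +\infty$, hence $\rho_t \ll \cl{L}^d$; positivity then follows from the maximum principle for the associated continuity equation, or equivalently from the regularizing effect of the heat flow.

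The Fisher information bound in (3) comes from the AGS formula identifying $|\partial \cl{F}|^2(\rho_t) = \int |\nabla\log\rho_t + \nabla V + \nabla W*\rho_t|^2 \diff\rho_t$ with the squared metric slope, together with $L^2_{\mathrm{loc}}$ integrability of the slope along any gradient flow curve. This in turn gives $\nabla\rho_\cdot \in L^1_{\mathrm{loc}}((0,+\infty);W^{1,1}(\bb{T}^d))$ after using Cauchy-Schwarz and the entropy bound (the subtlety being that one controls $\int|\nabla\log\rho_t|^2 \diff\rho_t$ rather than $\int|\nabla\rho_t|$, and one converts via $|\nabla\rho_t| = \rho_t |\nabla\log\rho_t|$). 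Finally, the distributional PDE (4) is the standard translation of the continuity equation $\partial_t \rho_t + \nabla\cdot(\rho_t v_t) = 0$ with Wasserstein velocity field $v_t = -(\nabla\log\rho_t + \nabla V + \nabla W*\rho_t)$, which follows from the AGS characterization of gradient flows in Wasserstein space. The main obstacle, if any, is ensuring that generalized geodesics on $\bb{T}^d$ behave well enough for the convexity inequalities to carry through; this is handled by using representatives in the cube $Q$ and the fact that for measures close in $\bb{W}_2$ the optimal transport plan is supported where $|x-y|_\infty < 1/2$, so the Euclidean convexity computations apply verbatim.
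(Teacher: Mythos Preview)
The paper does not give its own proof of this theorem: it simply states the result and attributes it to an adaptation of Theorem~11.2.8 in \cite{AGS}. Your proposal is precisely a sketch of that adaptation, so in spirit you are doing exactly what the paper indicates one should do, only in more detail than the paper itself provides.

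A few comments on the sketch. Your handling of the three structural hypotheses (lower semicontinuity, coercivity, semi-convexity along generalized geodesics) is correct in outline, and you rightly flag the only genuinely delicate point: generalized geodesics on $\bb{T}^d$ are not immediately covered by the Euclidean AGS theory, because the torus has a cut locus. Your proposed fix---work with representatives in the cube and use that optimal plans avoid the cut locus---is the right idea, and indeed the paper alludes to exactly this issue in a later remark (Section~3), noting that uniqueness of JKO minimizers for small $\tau$ ``relies on a version of geodesic convexity along generalized geodesics adapted to the case of the torus.'' So the paper itself treats this as a known but non-trivial adaptation rather than something to prove in detail. Your derivations of items (1)--(4) from the AGS machinery (slope identification with Fisher information, continuity equation, regularization effect) are standard and correct. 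The only place where your wording is slightly loose is the positivity claim in (1): ``maximum principle for the associated continuity equation'' is vague, and the cleanest route is via the parabolic regularity that the paper invokes just after this theorem (bounded coefficients, measure-valued Fokker--Planck theory), rather than anything intrinsic to the gradient-flow structure.
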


    \begin{remark}
        If $\rho_0$ is of finite energy (i.e. $\cl{F}[\rho_0] < +\infty$), the existence result is a by-product of the convergence of the JKO scheme (theorem \ref{thm: JKO_converges}), and the curve is actually of class $C^{0,1/2}_{\loc}([0,+\infty);\bb{W}_2)$. 
    \end{remark}

    We also have the following regularity for solutions. 

    \begin{proposition}[Regularity of gradient flow solution]
        Let $\rho_{\cdot}$ be any gradient flow solution to the Granular media equation, then for any $\alpha < 1$, $\rho_{\cdot}$ belongs to the class $C^{1,\alpha/2}_{\loc}((0,+\infty);C^{2,\alpha}(\bb{T}^d))$. \\
        Furthermore, if $\rho_0,V,W$ are smooths, then $\rho_{\cdot}$ is smooth (up to $t=0$). 
    \end{proposition}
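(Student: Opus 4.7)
The plan is to view the equation as a linear parabolic equation in $\rho$ with drift coefficient $b_t := \nabla V + \nabla W * \rho_t$, and then run a standard parabolic bootstrap combining De Giorgi--Nash--Moser regularity with Schauder estimates. The key structural observation is that since $W \in C^{2,1}(\bb{T}^d)$ and $\rho_t$ is a probability measure, the convolution $\nabla W * \rho_t$ automatically lies in $C^{1,1}(\bb{T}^d)$ with bounds depending only on the $C^{2,1}$ norm of $W$, uniformly in $t$. Thus the non-local term is essentially harmless: we are dealing with a linear, uniformly parabolic equation whose drift and its divergence are $C^{0,1}$ in space and continuous in $t$.

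I would proceed in three steps. First, obtain a locally uniform $L^\infty$ bound on $\rho_t$ for $t > 0$, by exploiting the parabolic smoothing of $\partial_t \rho - \Delta \rho = \nabla \cdot (\rho b)$ with $b$ bounded (Moser iteration, or Aronson-type heat-kernel bounds for divergence-form operators with drift). Second, rewrite the equation in non-divergence form
\[
    \partial_t \rho_t - \Delta \rho_t - b_t \cdot \nabla \rho_t - (\Delta V + \Delta W * \rho_t)\, \rho_t = 0,
\]
and invoke the Krylov--Safonov interior Hölder estimate to obtain $\rho \in C^{\alpha,\alpha/2}_{\loc}$ for some $\alpha > 0$. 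Third, since the coefficients now belong to $C^{0,1}_{\loc}$ in space-time, apply the interior parabolic Schauder estimate to conclude $\rho \in C^{1,\alpha/2}_{\loc}((0,+\infty); C^{2,\alpha}(\bb{T}^d))$ for every $\alpha < 1$. For smooth $V$, $W$, $\rho_0$, one iterates the Schauder step: differentiating the equation in space, each new coefficient involves higher derivatives of $V$ and of $W * \rho_t$, which are controlled at the current bootstrap level, and the compatibility of a smooth $\rho_0$ with the equation yields smoothness up to $t=0$.

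The main obstacle is the first step, because the gradient flow solution is a priori only weak/distributional, with $\rho_t \in L^1(\bb{T}^d)$ and merely an integrability bound on the Fisher information; the formal computations underlying Moser iteration are not directly justified at this level. The standard workaround is to approximate the gradient flow solution by smooth solutions (arising from mollified initial data and regularised potentials), for which the bootstrap is rigorous, derive uniform estimates, and then pass to the limit using the uniqueness part of Theorem \ref{thm: gradient_flow_solution}.
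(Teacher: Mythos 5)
Your approach is essentially the same as the paper's: both bootstrap from a local $L^\infty$ bound via parabolic regularity theory and then upgrade to the stated Schauder-level regularity, with the smooth case handled by iterating the bootstrap up to $t=0$. The paper cites an existing reference for parabolic regularity of measure solutions to Fokker--Planck equations to obtain the $L^\infty_{\loc}$ bound, rather than re-running Moser iteration, but that is a bibliographic choice rather than a mathematical difference.

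One step you should tighten: in your third step you claim the coefficients ``now belong to $C^{0,1}_{\loc}$ in space-time,'' whereas in your setup you only established that the drift $b_t = \nabla V + \nabla W * \rho_t$ is $C^{1,1}$ in space, uniformly in $t$, and merely \emph{continuous} in $t$. Continuity in time is not enough for parabolic Schauder estimates to yield the full $C^{1,\alpha/2}_{\loc}((0,+\infty); C^{2,\alpha}(\bb{T}^d))$ regularity; you need H\"older (in fact the paper gets Lipschitz) regularity of the coefficients in time. The missing ingredient is precisely what the paper uses: a gradient flow solution is, by Theorem~\ref{thm: gradient_flow_solution}, locally Lipschitz as a curve in $\bb{W}_2$, and since $\nabla W \in C^{1,1}(\bb{T}^d)$, the map $t \mapsto \nabla W * \rho_t$ is then locally Lipschitz into $C^{1,1}(\bb{T}^d)$. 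With that added, your argument closes.
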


    \begin{proof}
        By the parabolic theory for measures solutions to Fokker-Planck equation with bounded coefficients \cite{FokkerPlanckRegu}, $\rho_{\cdot}$ is of class $L^\infty_{\loc}((0,+\infty);L^\infty(\bb{T}^d))$. Furthermore, using that $t \to \rho_t$ is locally Lipschitz when seen as a curve valued in $\bb{W}_2$, one can easily see that $\Phi(t,x) := \nabla V(x) + \nabla W * \rho_t(x)$ is actually of class $C^{0,1}_{\loc}((0,+\infty);C^{1,1}(\bb{T}^d))$. Using Schauder theory for parabolic equation with bounded solution and Hölder coefficients we obtain that $\rho_{\cdot}$ is of class $C^{1,\alpha/2}_{\loc}((0,+\infty);C^{2,\alpha}(\bb{T}^d))$ for any $\alpha < 1$. 

        If $\rho_0,V,W$ are smooth, one can extends the above regularity up to time $t=0$, then using a bootstrap argument we obtain the global regularity. 
    \end{proof}

\subsection{Li-Yau-Hamilton Inequality for Granular-media equation}

    We state here second (in the introduction) main result: the Li-Yau-Hamilton inequality for solution to the Granular-media equation. We shall first give a proof in the smooth case, as the method might be useful in general context. In the un-regular case, this will be obtained as a by-product of the convergence of the JKO scheme using the asymptotic version of the estimate \ref{thm: Asymptotic_Li_Yau_Hamilton_Estimate}.   

    \begin{definition}[Pressure function]
        Let $\rho \in \cl{P}(\bb{T}^d)$ such that $\rho$ admits a strictly positive density. We define the associated pressure variable by the formula
        \begin{equation}
            u[\rho] := \log \rho + V + W * \rho 
        \end{equation}
        
        For a general measure $\rho \in \cl{P}(\bb{T}^d)$, we shall write down $D^2 u[\rho] \succeq -\lambda_0$ with the following meaning: if $\lambda_0 < +\infty$, this means that $u[\rho]$ is well-defined and the inequality is understood in the semi-convex sense, and by abuse of notation, if $\lambda_0 = +\infty$ this shall not bear any meaning (i.e. $\rho$ can be any probability measure even if $u[\rho]$ is not well defined).
    \end{definition}

    \begin{remark}
        If $D^2 u[\rho] \succeq -\lambda_0$ with $\lambda_0 < +\infty$, then $\rho$ is $C^{0,1}(\bb{T}^d)$, as $\rho$ is the exponential of a Lipschitz function. 
    \end{remark}
    
    Observe that if $D^2 u[\rho] \succeq -\lambda_0$ with $\lambda_0 < +\infty$, then has a positive $C^{0,1}(\bb{T}^d)$ density (as it is continuous, with Lipschitz logarithm). If $\rho_\cdot$ is a gradient-flow solution to the Granular-media equation, then the pressure $u[\rho_t]$ always defines a proper function, as $\rho_t \ll \cl{L}^d$ and $\rho_t > 0$ a.e. The following lemma follows from algebraic computations. 
    
    \begin{lemma} \label{lemma: HJ_Pressure_Variable}
        If $\rho_\cdot$ is a solution smooth solution to the Granular-media equation, and $\rho_0 > 0$, then $u_t = u[\rho_t]$ satisfies the non-local Hamilton-Jacobi equation 
        \begin{equation} \label{eq: HJ_Pressure_Variable}
            \partial_t u_t = \Delta u_t + |\nabla u_t|^2 - \nabla V \cdot \nabla u_t - \cl{R}(\nabla W, \nabla u_t, \rho_t) \qquad u_{|t=0} = u[\rho_0]
        \end{equation}
        where $\cl{R}(\nabla W, \nabla u_t, \rho_t)$ is a non-local term given by
        \begin{equation}
            \cl{R}(\nabla W, \nabla u_t, \rho_t)(x) := \int_{\bb{T}^d} \nabla W(x-y) \cdot [\nabla u_t(x) - \nabla u_t(y)]] \diff \rho_t(y)
        \end{equation}
    \end{lemma}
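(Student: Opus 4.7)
The lemma is essentially an algebraic computation, using the PDE for $\rho_t$ together with standard identities relating $\rho_t$, $\log \rho_t$ and their derivatives. I would organize it as follows.

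First, I would split $\partial_t u_t$ into the contribution of the pointwise part and the convolution part:
\begin{equation*}
    \partial_t u_t = \frac{\partial_t \rho_t}{\rho_t} + W * \partial_t \rho_t,
\end{equation*}
and introduce the shorthand $\Phi_t := \nabla V + \nabla W * \rho_t$, so that the defining identity of $u_t$ reads $\nabla \log \rho_t = \nabla u_t - \Phi_t$ and $\Delta \log \rho_t = \Delta u_t - \Delta V - \Delta W * \rho_t$.

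For the first term I would use the classical identity $\Delta \rho_t / \rho_t = \Delta \log \rho_t + |\nabla \log \rho_t|^2$, and expand the divergence $\nabla \cdot (\rho_t \Phi_t) = \rho_t \nabla \log \rho_t \cdot \Phi_t + \rho_t \nabla \cdot \Phi_t$, so that the equation for $\rho_t$ gives
\begin{equation*}
    \frac{\partial_t \rho_t}{\rho_t} = \Delta \log \rho_t + |\nabla \log \rho_t|^2 + \Phi_t \cdot \nabla \log \rho_t + \nabla \cdot \Phi_t.
\end{equation*}
Substituting $\nabla \log \rho_t = \nabla u_t - \Phi_t$ and $\Delta \log \rho_t = \Delta u_t - \nabla \cdot \Phi_t$, the terms $\nabla \cdot \Phi_t$ cancel and the $|\Phi_t|^2$ terms telescope, leaving
\begin{equation*}
    \frac{\partial_t \rho_t}{\rho_t} = \Delta u_t + |\nabla u_t|^2 - \nabla u_t \cdot \Phi_t = \Delta u_t + |\nabla u_t|^2 - \nabla V \cdot \nabla u_t - (\nabla W * \rho_t)(x) \cdot \nabla u_t(x).
\end{equation*}

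For the second term I would integrate by parts twice in the convolution: using the torus PDE,
\begin{equation*}
    (W * \partial_t \rho_t)(x) = \int W(x-y)\bigl[\Delta \rho_t(y) + \nabla_y \cdot (\rho_t \Phi_t)(y)\bigr]\diff y = \int \nabla W(x-y) \cdot \bigl[\nabla \rho_t(y) + \rho_t(y)\Phi_t(y)\bigr]\diff y,
\end{equation*}
which, using $\nabla \rho_t / \rho_t + \Phi_t = \nabla u_t$, equals $\int \nabla W(x-y) \cdot \nabla u_t(y)\diff \rho_t(y)$. Finally I would rewrite the convolution term from the first step as $(\nabla W * \rho_t)(x)\cdot \nabla u_t(x) = \int \nabla W(x-y)\cdot \nabla u_t(x)\diff \rho_t(y)$ and combine the two integrals; the difference is exactly $-\mathcal{R}(\nabla W, \nabla u_t, \rho_t)$, yielding \eqref{eq: HJ_Pressure_Variable}.

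There is no real obstacle here beyond careful bookkeeping of signs in the integration by parts for the convolution (where $\nabla_y W(x-y) = -\nabla W(x-y)$) and making sure the $\nabla V$, $\nabla W * \rho_t$, $\Delta V$, $\Delta W * \rho_t$ terms generated by the substitutions $\nabla \log \rho_t = \nabla u_t - \Phi_t$ and $\Delta \log \rho_t = \Delta u_t - \nabla\cdot\Phi_t$ cancel correctly. Smoothness of $\rho_t$ and positivity $\rho_0 > 0$ (propagated to $\rho_t > 0$ by standard parabolic arguments) legitimate all of these pointwise manipulations.
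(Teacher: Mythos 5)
Your proof is correct, and it fills in precisely what the paper leaves implicit: the paper states only that ``the following lemma follows from algebraic computations'' and gives no proof, so there is no alternative argument to compare against. Your decomposition into the pointwise part $\partial_t \rho_t/\rho_t$ and the convolution part $W*\partial_t\rho_t$, the use of $\Delta\rho/\rho = \Delta\log\rho + |\nabla\log\rho|^2$, the substitutions $\nabla\log\rho_t = \nabla u_t - \Phi_t$ and $\Delta\log\rho_t = \Delta u_t - \nabla\cdot\Phi_t$ with the resulting cancellations, and the integration by parts (with the sign flip from $\nabla_y W(x-y) = -\nabla W(x-y)$) giving $W*\partial_t\rho_t(x) = \int \nabla W(x-y)\cdot\nabla u_t(y)\,\mathrm{d}\rho_t(y)$ all check out, and combining the two convolution integrals indeed produces exactly $-\mathcal{R}(\nabla W,\nabla u_t,\rho_t)$.
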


    The Li-Yau-Hamilton inequality, then takes the following form.
    
    \begin{theorem}[Li-Yau-Hamilton Inequality for Granular-media Equation]
        Let $\rho$ be any gradient flow solution to the Granular-media equation starting from $\rho_0 \in \cl{P}(\bb{T}^d)$. Consider $\Lambda$ as defined in equation \ref{eq: Lambda_Star}.  Suppose that $D^2 u[\rho_0] \succeq -\lambda_0$ with $\lambda_0 \in [0,+\infty]$. Then
        \begin{enumerate}
            \item If $\lambda_0 < +\infty$, one has
            \begin{equation}
                D^2 u[\rho_t] \succeq \left \{ \begin{array}{ll} -\frac{\lambda_0}{2 t \lambda_0 + 1} & \mbox{if $\Lambda = 0$} \\ -\frac{\Lambda \lambda_0}{\Lambda e^{-\Lambda t} + 2 x_0 (1-e^{-\Lambda t})} & \mbox{else} \end{array} \right . 
            \end{equation}
            \item If $\lambda_0 = +\infty$, one has
            \begin{equation}
                D^2 u[\rho_t] \succeq \left \{ \begin{array}{ll} -\frac{1}{2 t} & \mbox{if $\Lambda = 0$} \\ -\frac{\Lambda}{2 (1-e^{-\Lambda t})} & \mbox{else} \end{array} \right . 
            \end{equation}
        \end{enumerate}
    \end{theorem}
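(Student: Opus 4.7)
The plan is first to reduce to the case where $\rho_0, V, W$ are smooth and $\rho_0$ is strictly positive, so that by the regularity proposition the gradient flow $\rho_t$ is smooth up to $t=0$ and its pressure $u_t$ satisfies the non-local Hamilton--Jacobi equation of Lemma \ref{lemma: HJ_Pressure_Variable}. The case of general $\rho_0$ (in particular $\lambda_0 = +\infty$) will then follow by smoothing $\rho_0$, applying the smooth estimate, and passing to the limit using stability of the gradient flow. In the smooth setting, I set
\[
    m(t) := \min_{(x,\nu) \in \bb{T}^d \times \bb{S}^d} u_{\nu\nu}(t,x),
\]
which is locally Lipschitz in $t$ by Hamilton's envelope argument on the smallest eigenvalue of $D^2 u_t$; since the bound is trivial when $m(t) \geq 0$, I restrict attention to times at which $m(t) < 0$.

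Let $(x^*, \nu^*)$ be a minimizer at time $t$. Differentiating the HJ equation twice in the fixed direction $\nu^*$ and rearranging gives the schematic form
\begin{equation*}
    \partial_t u_{\nu\nu} = \Delta u_{\nu\nu} + 2|\nabla u_\nu|^2 + 2\nabla u \cdot \nabla u_{\nu\nu} - \partial_{\nu\nu}(\nabla V \cdot \nabla u) - \partial_{\nu\nu}\cl{R}.
\end{equation*}
Optimality in $x$ yields $\nabla u_{\nu^*\nu^*}(x^*) = 0$ and $\Delta u_{\nu^*\nu^*}(x^*) \geq 0$, while optimality in $\nu$ on the sphere makes $\nu^*$ an eigenvector of $D^2 u_t(x^*)$ for the eigenvalue $m$; since $D^2 u_t(x^*) - m \I_d$ is positive semi-definite with $\nu^*$ in its kernel, one deduces the key identity $\nabla u_{\nu^*}(x^*) = m\, \nu^*$. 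This turns the good term $2|\nabla u_{\nu^*}|^2$ into exactly $2m(t)^2$ at the minimizer.

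The critical step is then bounding the remaining terms by $\Lambda\, m(t)$ through Lemma \ref{lemma: gradient_estimate}. Since $D^2 u_t \succeq m$ on the torus, the lemma gives $|\nabla u_t|_\infty \leq -m/2$, and hence $|\nabla u_t(x^*) - \nabla u_t(y)|_\infty \leq -m$ for every $y$. Combined with $|\nabla V_{\nu\nu}|_1 \leq L_V$ and $|\nabla W_{\nu\nu}|_1 \leq L_W$, this controls $-\nabla V_{\nu^*\nu^*}(x^*)\cdot\nabla u_t(x^*) \geq \tfrac{1}{2} L_V\, m$ and, after expanding $\partial_{\nu\nu}\cl{R}(x^*)$ and using that the $\nabla u_{\nu\nu}(x^*)$-term vanishes, the remaining piece $\int \nabla W_{\nu^*\nu^*}(x^*-y)\cdot[\nabla u_t(x^*)-\nabla u_t(y)]\,\diff\rho_t(y)$ contributes a lower bound $L_W\, m$. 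The last two contributions $-2\nabla V_{\nu^*}(x^*)\cdot \nabla u_{\nu^*}(x^*) = -2 m V_{\nu^*\nu^*}(x^*)$ and, analogously, $-2 m (W_{\nu^*\nu^*}*\rho_t)(x^*)$ (the latter coming from the surviving term $2\nabla u_{\nu^*}\cdot(\nabla W_{\nu^*}*\rho_t)$ in $\partial_{\nu\nu}\cl{R}$, using $\nabla u_{\nu^*}=m\nu^*$) are bounded below by $2m\lambda_V$ and $2m\lambda_W$ via the semi-convexity bounds $V_{\nu^*\nu^*}\geq -\lambda_V$, $W_{\nu^*\nu^*}\geq -\lambda_W$ and $m<0$. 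Adding everything up and recognizing $\Lambda = 2(\lambda_V+\lambda_W) + L_V/2 + L_W$ yields
\begin{equation*}
    m'(t) \geq 2\, m(t)^2 + \Lambda\, m(t).
\end{equation*}

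As long as $m(t) < 0$, the substitution $z(t) := -1/m(t) > 0$ linearizes this inequality into $z'(t) + \Lambda z(t) \geq 2$ with $z(0) \geq 1/\lambda_0$. For $\Lambda \neq 0$, integrating with the factor $e^{\Lambda t}$ gives $z(t) \geq e^{-\Lambda t} z(0) + 2(1-e^{-\Lambda t})/\Lambda$, which is equivalent to the stated bound of case~(i); the $\Lambda = 0$ case follows by direct integration, and case~(ii) comes from letting $\lambda_0 \to +\infty$ in the first formula, justified by the smoothing argument. The main technical obstacle is the rigorous execution of Hamilton's envelope trick at the minimum in the presence of possibly non-unique minimizers, together with ensuring enough regularity for the twice-differentiated HJ equation to make pointwise sense at $(t,x^*)$ — both automatic in the smooth approximating setting — and finally showing that the resulting Hessian bound is stable under the limit $\rho_0^\epsilon \to \rho_0$.
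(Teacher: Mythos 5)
Your argument in the smooth case is essentially identical to the paper's proof: same pressure variable, same maximum-principle computation at a spatial/directional minimizer $(x^*,\nu^*)$, same three-term decomposition with $2|\nabla u_{\nu^*}|^2=2m^2$ handled by eigenvector optimality, $\nabla V_{\nu\nu}\cdot\nabla u$ and the non-local term handled by Lemma~\ref{lemma: gradient_estimate}, and the same logistic ODE $\dot m \geq 2m^2+\Lambda m$ integrated via the substitution $z=-1/m$ (Lemma~\ref{lemma: Logistic_Gronwall_Lemma}). That part is correct.

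The genuine gap is in your reduction to the smooth case. You assert that ``the case of general $\rho_0$ will then follow by smoothing $\rho_0$, applying the smooth estimate, and passing to the limit using stability of the gradient flow.'' This works, as the paper notes, when $W=0$ (regularize $u[\rho_0]$ and $V$ by convolution, which preserves $D^2 u^\epsilon\succeq -\lambda_0$ exactly, then set $\rho_0^\epsilon\propto\exp(u^\epsilon-V^\epsilon)$), and when $\lambda_0=+\infty$ (any mollification suffices since no bound on $D^2 u[\rho_0]$ needs preserving). But when $W\neq 0$ \emph{and} $\lambda_0<+\infty$, there is no known way to mollify $\rho_0$ while preserving (even asymptotically) the lower bound $D^2 u[\rho_0^\epsilon]\succeq -\lambda_0-o(1)$: the map $\rho\mapsto u[\rho]=\log\rho+V+W*\rho$ is nonlocal and nonlinear, so mollifying $\rho_0$ does not control $D^2(W*\rho_0^\epsilon)$ in a way compatible with $D^2\log\rho_0^\epsilon$, and mollifying $u[\rho_0]$ directly cannot be inverted back to a density because $u$ depends on $\rho$ through $W*\rho$. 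The paper explicitly flags this obstruction and therefore does \emph{not} use the smooth-case proof to obtain the full theorem; instead it deduces the nonsmooth case from the discrete (JKO) Li--Yau--Hamilton estimate (Theorem~\ref{thm: Asymptotic_Li_Yau_Hamilton_Estimate}), letting $\tau\to 0$ and invoking the stability Lemma~\ref{lemma: Stability_Semi_Convexity_Pressure}, with an additional approximation in the effective domain of $\cl F$ only to handle initial data of infinite energy. Your proposal is therefore complete only for the sub-cases $W=0$ or $\lambda_0=+\infty$; for the remaining case it needs either a new regularization scheme compatible with the nonlocal pressure, or the detour through the JKO scheme that the paper takes.
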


    We shall first provide a self-contained proof in the smooth case, by relying on a maximum principle argument. By stability of the gradient flow solution, one would then be able to recover the general theorem in the following cases : either $W = 0$, or $\lambda_0 = +\infty$. For instance one can approximate the initial data and the potential $V$ by smooth function, while preserving the bound $D^2 u[\rho_0] \succeq -\lambda_0$. Indeed, one can perform a regularization by kernel of $u[\rho_0]$ to obtain $u^\epsilon_0$ smooth satisfying $D^2 u^\epsilon \succeq -\lambda_0$, a similar regularization for $V$, and then define $\rho_0^\epsilon \propto \exp(u^\epsilon - V^\epsilon)$. In the case $\lambda_0 = +\infty$ and $W \neq 0$, it suffices to perform a regularization of $V,W,\rho_0$ to obtain the result. 

    On the other hand, in the case $W \neq 0$, it is not clear how to perform a regularization \emph{while} preserving the lower bound $D^2 u[\rho_0] \succeq -\lambda_0$, or at least asymptotically (i.e. $D^2 u[\rho_0^\epsilon] \succeq -\lambda_0 - o(1)$. This is because the relation between $\rho_0$ and $u[\rho_0]$ is now highly non-trivial. One can for instance easily construct approximation $\rho_0^\epsilon$ satisfying $D^2 u[\rho_0^\epsilon] \succeq -\lambda_0^\epsilon$ with $\lambda_0^\epsilon$ converging, but it might be the case that the limit is smaller than $\lambda_0$, which won't give us the sharpest estimate possible. This issue will also appears at the level of the JKO scheme. 

    We still chose to give the proof in the smooth case, as it is far more illuminating than the non-regular case, and might be adaptable to other settings. The general case will be obtained as a by-product of the discrete version of the inequality.

    \begin{proof}[Proof in the smooth case]
        We assume that $V,W,\rho_0$ are smooth, with $\rho_0 > 0$. We let $-\lambda_t := \min_{x \in \bb{T}^d, \nu \in \bb{S}^d} D^2 u[\rho_t](x)(\nu,\nu)$, well-defined in $[0,+\infty)$ by compactness of $\bb{T}^d \times \bb{S}^d$. We shall first fix $t > 0$, and we let $(x_t,\nu_t)$ be any points reaching optimality for $\lambda_t$. We argue that, at those points, we have
        \[
            \partial_t u_{t,\nu_t \nu_t}(x_t) \geq 2 \lambda_t^2 - \Lambda \lambda_t
        \]
        From now on, until we have finished the proof of this inequality, we shall drop dependency on $t$ of $\nu_t,x_t$, and assume all computation to be done at the peculiar point $x_t$. 

        Optimality conditions and the semi-convexity bounds first gives the following: 
        \begin{itemize}
            \item \emph{Optimality in $\nu$: } This forces $\nu$ to be an eigenvector of $D^2 u$, and $-\lambda_t$ is then the associated eigenvalue. That is $D^2 u \cdot \nu = \nabla u_\nu = -\lambda_t \nu$. 
            \item \emph{Optimality in $x$: } This gives the first and second order condition $\nabla u_{t,\nu \nu} = 0$ and $D^2 u_{t,\nu \nu} \succeq 0$. 
            \item \emph{Gradient estimate: } By lemma \ref{lemma: gradient_estimate} we also have $|\nabla u|_\infty \leq \lambda_t / 2$. 
        \end{itemize}
        Also deriving twice the equation in direction $\nu$ at the minimum point gives
        \begin{equation}
            \partial_t u_{t,\nu \nu} = \underset{(U)}{\underbrace{\Delta u_{t,\nu \nu} + 2 \nabla u_t \cdot \nabla u_{t,\nu \nu} + 2 |\nabla u_{t,\nu}|^2}} - \underset{(V)}{\underbrace{[\nabla V \cdot \nabla u]_{\nu \nu}}} - \underset{(W)}{\underbrace{\cl{R}(\nabla W, \nabla u_t, \rho_t)_{\nu \nu}}}
        \end{equation}
        We estimate each of those three terms separately.

        \begin{enumerate}
            \item[(U)] By optimality in $x$ we have $\Delta u_{t,\nu \nu} \geq 0$ and $\nabla u_t \cdot \nabla u_{t,\nu \nu} = 0$. Furthermore, optimality in $\nu$ gives $2 |\nabla u_{t,\nu}|^2 = 2 \lambda_t^2$. Hence we get $(U) \geq 2 \lambda_t^2$. 
            
            \item[(V)] We extends the derivatives to get
            \[
                (V) = \nabla V_{\nu \nu} \cdot \nabla u + 2 \nabla V_{\nu} \cdot \nabla u_{t,\nu} + \nabla V \cdot \nabla u_{t,\nu \nu}
            \]
            by optimality in $x$, we have $\nabla V \cdot \nabla u_{t,\nu \nu} = 0$, furthermore using the Lipschitz bound on $V$, and the duality between the $1$ and $\infty$ norms giving $v \cdot w \leq |v|_1 \: |w|_\infty$ to get
            \[
                \nabla V_{\nu \nu} \cdot \nabla u \leq |\nabla \partial_{\nu \nu} V|_1 |\nabla u|_\infty \leq \frac{1}{2} L_V \lambda_t
            \]
            and using optimality in $\nu$ we have
            \[
                2 \nabla V_\nu \cdot \nabla u_{t,\nu} = -2 \lambda_t D^2 V[\nu,\nu] \leq 2 \lambda_t \lambda_V
            \]
            as $\lambda_t \geq 0$ since there is no strictly convex periodic functions. Therefore we obtain
            \[
                (V) \leq \left ( \frac{1}{2} L_V + 2 \lambda_V \right ) \lambda_t
            \]

            \item[(W)] Expending again the derivative we obtain
            \begin{align*}
                (W) &= \int_{\bb{T}^d} \nabla W_{\nu \nu}(x-y) \cdot (\nabla u_t(x) - \nabla u_t(y)) \diff \rho_t(y) \\
                &+ 2 [\nabla W_\nu * \rho_t] \cdot \nabla u_{t,\nu} + [\nabla W * \rho_t] \cdot \nabla u_{t,\nu \nu}
            \end{align*}
            again the last term is equal to $0$. We can bound the first term as
            \[
                \int_{\bb{T}^d} \nabla \partial_{\nu \nu} W(x-y) \cdot (\nabla u_t(x) - \nabla u_t(y)) \diff \rho_t(y) \leq \int_{\bb{T}^d} 2 |\nabla \partial_{\nu \nu} W|_1 |\nabla u_t|_\infty \diff \rho_t \leq L_W \lambda_t 
            \]
            and the second term as
            \[
                2 [\nabla W_\nu * \rho_t] \cdot \nabla u_{t,\nu} = -2 \lambda_t \int_{\bb{T}^d} D^2 W(x-y)[\nu,\nu] \diff \rho_t(y) \leq 2 \lambda_t \lambda_W
            \]
            Hence we obtain
            \[
                (W) \leq (L_W + 2 \lambda_W) \lambda_t
            \]
        \end{enumerate}
        Combining those three inequalities with the equation give the inequality.

        Now using the envelope theorem, and the smoothness of $u$, combined with the compactness of $\bb{T}^d \times \bb{S}^d$, we
        deduce that the function $t \to \lambda_t$ is locally absolutely continuous on $[0,+\infty)$, and for any measurable selection $t \to (x_t,\nu_t)$ of optimizers, one has $-\dot{\lambda}_t = \partial_t u_{t,\nu_t \nu_t}(x_t)$ a.e. Combining this with the previous inequality gives that
        \[
            -\dot{\lambda_t} \geq 2 \lambda_t^2 - \Lambda \lambda_t
        \]
        for a.a. $t \in (0,+\infty)$. We then conclude using the Grönwall lemma for logistic equation \ref{lemma: Logistic_Gronwall_Lemma} below. 
    \end{proof}

    \begin{lemma}[Grönwall lemma for logistic equation] \label{lemma: Logistic_Gronwall_Lemma}
        Let $t \in [0,+\infty) \to x_t \in [0,+\infty)$ be a locally absolutely continuous curve such that $\dot{x}_t \leq \Lambda x_t - 2 x_t^2$ for a.a. $t \in (0,+\infty)$. Then one has
        \begin{equation}
            x_t \leq \left \{ \begin{array}{ll} \frac{x_0}{2 t x_0 + 1} & \mbox{if $\Lambda = 0$} \\
            \frac{\Lambda x_0}{\Lambda e^{-\Lambda t} + 2 x_0 (1-e^{-\Lambda t})} & \mbox{else} \end{array} \right . 
        \end{equation}
    \end{lemma}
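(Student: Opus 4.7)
The plan is to reduce this logistic differential inequality to a linear one via the classical Bernoulli substitution $y_t := 1/x_t$. On any interval where $x_t$ is bounded away from zero, the curve $y_t$ is locally absolutely continuous (since $s \mapsto 1/s$ is smooth there), with $\dot{y}_t = -\dot{x}_t / x_t^2$. Dividing the hypothesis $\dot{x}_t \leq \Lambda x_t - 2 x_t^2$ by the positive quantity $-x_t^2$ flips the inequality and yields the linear differential inequality
\[
    \dot{y}_t \geq 2 - \Lambda y_t \qquad \text{a.e.}
\]
on such an interval.

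From here I would solve this explicitly by the standard integrating-factor method. When $\Lambda = 0$, integrating directly gives $y_t \geq y_0 + 2t$. When $\Lambda \neq 0$, multiplying by $e^{\Lambda t}$ gives $(e^{\Lambda t} y_t)' \geq 2 e^{\Lambda t}$, and integrating from $0$ to $t$ produces
\[
    y_t \geq y_0 e^{-\Lambda t} + \frac{2}{\Lambda}(1 - e^{-\Lambda t}).
\]
Inverting, using $y_0 = 1/x_0$, and simplifying algebraically recovers exactly the claimed upper bound on $x_t$ in both cases.

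To finish, I would handle the degenerate situations. If $x_0 = 0$, the assumption in particular gives $\dot{x}_t \leq \Lambda x_t$, so the linear Grönwall lemma combined with $x_t \geq 0$ forces $x_t \equiv 0$, consistent with the claimed bound (whose numerator vanishes at $t=0$). If $x_0 > 0$ but $x_{t^*} = 0$ for some first time $t^* \in (0, +\infty)$, the substitution argument applies on $[0,t^*)$, and by continuity $x_{t^*} = 0$; then $\dot{x}_t \leq \Lambda x_t$ again forces $x_t = 0$ for $t \geq t^*$, which a fortiori satisfies the (nonnegative) upper bound.

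The only subtlety is the justification that $y_t = 1/x_t$ inherits local absolute continuity from $x_t$ while $x_t$ stays positive, and that the chain rule can be invoked almost everywhere; this is standard since the reciprocal function is Lipschitz on any set bounded away from $0$. Once this is in place, the proof is an explicit resolution of a first-order linear ODE and nothing more.
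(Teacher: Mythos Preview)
Your proof is correct and follows essentially the same approach as the paper: the Bernoulli substitution $y_t = 1/x_t$ reduces the logistic inequality to the linear one $\dot{y}_t \geq 2 - \Lambda y_t$, which is then integrated via the factor $e^{\Lambda t}$. The only difference is in treating the degenerate case where $x_t$ may vanish: you argue by case analysis on the first hitting time of zero and invoke the linear Gr\"onwall inequality thereafter, whereas the paper perturbs to $x_t^\epsilon := x_t + \epsilon$, checks that $\dot{x}_t^\epsilon \leq (\Lambda + 4\epsilon) x_t^\epsilon - 2(x_t^\epsilon)^2$, applies the strictly-positive case, and lets $\epsilon \to 0$.
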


    \begin{proof}
        If $x_t > 0$ for all $t \geq 0$, then we can divide by $x_t^2$ and obtain the inequality, for $y_t = x_t^{-1}$, $\dot{y}_t \geq 2 - \Lambda y_t$. Multiplying by $e^{\Lambda t}$ and integrating gives $e^{\Lambda t} y_t - y_0 \geq \frac{2}{\Lambda} (e^{\Lambda t} - 1)$ in the case $\Lambda \neq 0$, and $\geq 2 t$ else. Algebraic manipulations then gives the result. In the general case, we consider $x_t^\epsilon := x_t + \epsilon$, which is then strictly positive, and satisfies $\dot{x}_t^\epsilon \leq (\Lambda + 4 \epsilon) x_t^\epsilon - 2 (x_t^\epsilon)^2$, we can then proceed as above, and letting $\epsilon \to 0$ gives the result. 
    \end{proof}

    \begin{remark}
        With the same type of reasoning, one should be able to obtain semi-convexity estimates for a class of non-local Hamilton-Jacobi equations of the form \\ $\partial_t u = \epsilon \Delta u + \int H(x,y;\nabla u(x),\nabla u(y)) \dd{\eta_t(y)}$ for a class of non-linearity $H$. 
    \end{remark}

    The proof in the non-smooth case will rely on the following stability of semi-convexity bounds for the pressure variable.

    \begin{lemma}[Stability of Semi-Convexity bounds] \label{lemma: Stability_Semi_Convexity_Pressure}
        Let $\rho_n \to \rho$ in $\bb{W}_2$, and suppose that $D^2 u[\rho_n] \succeq -\lambda_0^n > -\infty$ with $\lambda_0^n \to \lambda_0$. Then one has $D^2 u[\rho] \succeq -\lambda_0$.  
    \end{lemma}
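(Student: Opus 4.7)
The plan is to extract a uniformly convergent subsequence of $u[\rho_n]$ by Arzelà--Ascoli, and then identify its limit with $u[\rho]$. The key input is Lemma~\ref{lemma: gradient_estimate}, which turns the semi-convexity hypothesis into a uniform Lipschitz bound; the rest of the argument consists in propagating this control through the nonlinear relation $\rho = \exp(u[\rho] - V - W*\rho)$ and passing to the limit. We may assume $\lambda_0 < +\infty$, as otherwise the conclusion is vacuous by the convention made in the definition of the pressure.

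First I would establish uniform estimates. Since $D^2 u[\rho_n] \succeq -\lambda_0^n$ with $\lambda_0^n \to \lambda_0$, Lemma~\ref{lemma: gradient_estimate} yields $|\nabla u[\rho_n]|_\infty \leq \lambda_0^n/2$, uniformly bounded in $n$. Since $V$ is $C^{2,1}$ and $\nabla W$ is bounded, $W*\rho_n$ is also uniformly Lipschitz. Writing
\[
    \log \rho_n = u[\rho_n] - V - W*\rho_n,
\]
I deduce that $\log \rho_n$ is uniformly Lipschitz on $\bb{T}^d$. Combined with $\int_{\bb{T}^d} \rho_n = 1$, the constraint $\log \rho_n(x) - \log \rho_n(y) \leq L\,\mathrm{diam}(\bb{T}^d)$ forces uniform upper and lower positive bounds on $\rho_n$, and hence a uniform $L^\infty$ bound on $\log \rho_n$. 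Since $V$ is fixed and $W*\rho_n$ is uniformly bounded, it follows that the family $(u[\rho_n])$ is equibounded and equicontinuous.

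Next, by Arzelà--Ascoli I extract a subsequence (not relabelled) such that $u[\rho_n] \to \tilde u$ uniformly on $\bb{T}^d$. Since uniform convergence preserves semi-convexity, and $\lambda_0^n \to \lambda_0$, the limit satisfies $D^2 \tilde u \succeq -\lambda_0$. On the other hand, $\rho_n \to \rho$ in $\bb{W}_2$ implies weak convergence on the compact torus, and $W \in C^{2,1}(\bb{T}^d)$ then gives $W*\rho_n \to W*\rho$ uniformly. Hence
\[
    \log \rho_n = u[\rho_n] - V - W*\rho_n \;\longrightarrow\; \tilde u - V - W*\rho
\]
uniformly, so $\rho_n \to \exp(\tilde u - V - W*\rho)$ uniformly. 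By uniqueness of the weak limit, this limit equals $\rho$, so $\tilde u = \log \rho + V + W*\rho = u[\rho]$, and therefore $D^2 u[\rho] \succeq -\lambda_0$.

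The only step requiring some care is the uniform two-sided bound on $\rho_n$: it is what allows one to pass from a uniform Lipschitz bound on $\log \rho_n$ to equiboundedness of $u[\rho_n]$, and hence to apply Arzelà--Ascoli. Once this bound is obtained from Lemma~\ref{lemma: gradient_estimate} together with the unit-mass normalisation, the rest of the argument is standard compactness and uniqueness of limits.
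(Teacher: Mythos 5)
Your proof is correct and takes essentially the same route as the paper: both use Lemma~\ref{lemma: gradient_estimate} to convert the semi-convexity hypothesis into uniform Lipschitz and $L^\infty$ bounds, apply Arzelà--Ascoli, identify the uniform limit with $\rho$, and invoke stability of semi-convexity under uniform convergence. The only presentational difference is that you apply Arzelà--Ascoli directly to the family $u[\rho_n]$ and then reconstruct $\rho_n$ via the exponential, whereas the paper compactifies the family $\rho_n$ itself and then passes to $u[\rho_n]$; the two are interchangeable once the uniform two-sided bounds on $\rho_n$ are in hand, and your write-up is in fact a bit more explicit about how the nonlocal term $W*\rho_n$ and the unit-mass normalisation are used to close the argument.
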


    \begin{proof}
        The lower bound $D^2 u[\rho_n] \succeq -\lambda_0^n \succeq -\sup_n \lambda_0^n$ implies uniform $L^\infty$ bounds from above and below for $\rho_n$, and uniform Lipschitz bounds (see the proof of proposition \ref{prop: Quantitative_Lipschitz_L_Infty_Bounds}). By Arzela-Ascoli, this shows that $\rho$ is in fact Lipschitz continuous, uniformly bounded from above and below, and $\rho_n \to \rho$ uniformly. In particular, we deduce that $u[\rho_n] \to u[\rho]$ uniformly. Since semi-convexity bounds are stable by uniform limits, we conclude. 
    \end{proof}

    \begin{proof}[Proof in the non-smooth case]
        Suppose first that $\cl{F}[\rho_0] < +\infty$, then if $\rho^\tau$ is a JKO flow starting from $\rho_0$, we have $\rho^\tau_t \to \rho_t$ in $\bb{W}_2$ by theorem \ref{thm: JKO_converges}. Using the asymptotic Li-Yau-Hamilton for the JKO scheme \ref{thm: Asymptotic_Li_Yau_Hamilton_Estimate} and the stability lemma \ref{lemma: Stability_Semi_Convexity_Pressure}, we then obtain the result in this case.
        Now suppose that $\rho_0$ is any initial data. If $D^2 u[\rho_0] \succeq -\lambda_0$ with $\lambda_0 < +\infty$, then we must have $\cl{F}[\rho_0] < +\infty$ and the result is true. Otherwise, we approximate $\rho_0$ by a sequence of initial data $\rho_0^n$ in the effective domain of $\cl{F}$ (which is dense in $\cl{P}(\bb{T}^d)$). Since gradient flow solutions are stable under perturbation of the initial data (\cite{AGS} theorem 11.2.1.), we have $\rho_t^n \to \rho_t$ in $\bb{W}_2$, and we conclude again using again lemma \ref{lemma: Stability_Semi_Convexity_Pressure}. 
    \end{proof}    

\subsection{First consequence: Lipschitz and $L^\infty$-bounds}

    As explained in the introduction, the Li-Yau-Hamilton estimate provides quantitative regularization effects for the flow. We give two instances of such results:
    \begin{enumerate}
        \item $\rho$ is locally (in positive time) uniformly Lipschitz, with explicit constant. 
        \item One has a quantitative version of the strong-maximum principle: $\rho$ is bounded away from $0$ and $+\infty$ with explicit constant.  
    \end{enumerate}

    Lipschitz bounds follows immediately from the semi-convexity to Lipschitz estimate given by lemma \ref{lemma: gradient_estimate}. We can also derive $L^\infty$ bounds. 

    \begin{proposition}[Quantitative Lipschitz and $L^\infty$ Bounds] \label{prop: Quantitative_Lipschitz_L_Infty_Bounds}
        Let $\rho$ be any solution to the Aggregation-Diffusion equation starting from $\rho_0$ satisfying $D^2 u[\rho_0] \succeq -\lambda_0$. Then 
        \begin{equation}
            |\nabla u[\rho_t]|_\infty \leq \frac{1}{2} E_t^{\lambda_0}
        \end{equation}
        where $E_t^{\lambda_0}$ is the function appearing in the right-hand side of the Li-Yau-Hamilton inequality, depending on $\lambda_0 \in [0,+\infty]$. In particular
        \begin{equation}
            |\nabla \log \rho_t|_\infty \leq |\nabla V|_\infty + |\nabla W|_\infty + \frac{1}{2} E_t^{\lambda_0} =: L_t^{\lambda_0}
        \end{equation}
        Furthermore, one has the following $L^\infty$ bound.
        \begin{equation}
            \exp(-\frac{d \sqrt{d}}{2} L_t^{\lambda_0}) \leq \rho \leq \exp(\frac{d \sqrt{d}}{2} L_t^{\lambda_0})
        \end{equation}
    \end{proposition}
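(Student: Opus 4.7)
The plan is to read off the three bounds from the Li--Yau--Hamilton inequality by feeding the resulting Hessian lower bound into Lemma~\ref{lemma: gradient_estimate}, and then using that $\rho_t$ is a unit-mass density on the torus.

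First I would apply the Li--Yau--Hamilton estimate to obtain $D^2 u[\rho_t] \succeq -E_t^{\lambda_0}$, where $E_t^{\lambda_0}$ is (by definition of the notation used in the proposition) exactly the right-hand side of the theorem, depending on whether $\lambda_0$ is finite and whether $\Lambda$ vanishes. Since $u[\rho_t]$ is a periodic function on $\bb{R}^d$, Lemma~\ref{lemma: gradient_estimate} applied with $\lambda = E_t^{\lambda_0}$ yields directly $|\nabla u[\rho_t]|_\infty \leq \tfrac{1}{2} E_t^{\lambda_0}$, which is the first bound.

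For the second bound, I would differentiate the definition of the pressure variable to get $\nabla \log \rho_t = \nabla u[\rho_t] - \nabla V - \nabla W * \rho_t$, then apply the triangle inequality in $|\cdot|_\infty$. The convolution term is controlled by $|\nabla W * \rho_t|_\infty \leq |\nabla W|_\infty$, which holds since $\rho_t$ is a probability measure and $(\nabla W * \rho_t)(x)$ is a weighted average of translates of $\nabla W$. Combining with the previous bound yields the Lipschitz estimate with constant $L_t^{\lambda_0}$.

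For the $L^\infty$ bound, the key observation is that the Lipschitz estimate just obtained implies that $\rho_t$ is continuous on $\bb{T}^d$, and since it is a probability density on a domain of Lebesgue measure $1$, the intermediate value theorem furnishes some $x_0 \in \bb{T}^d$ with $\rho_t(x_0) = 1$, i.e.\ $\log \rho_t(x_0) = 0$. Propagating this value to an arbitrary $x \in \bb{T}^d$ via the Lipschitz estimate on $\log \rho_t$ gives $|\log \rho_t(x)| \leq L_t^{\lambda_0} \cdot \mathrm{diam}(\bb{T}^d)$; passing from the $\ell^\infty$ gradient bound to a Euclidean Lipschitz constant costs a factor $\sqrt{d}$ (Cauchy--Schwarz), while the Euclidean diameter of $\bb{T}^d$ is $\sqrt{d}/2$, which produces the displayed factor. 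Exponentiating gives the two-sided bound. No step is a real obstacle; the main care is in tracking the geometric constant and in invoking continuity of $\rho_t$, which is not available \emph{a priori} but is an automatic consequence of the Lipschitz estimate established in the previous step.
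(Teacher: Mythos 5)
Your proof of the gradient and Lipschitz bounds matches the paper's: the Li--Yau--Hamilton inequality gives $D^2 u[\rho_t]\succeq -E_t^{\lambda_0}$, Lemma~\ref{lemma: gradient_estimate} converts this to $|\nabla u[\rho_t]|_\infty\leq\tfrac12 E_t^{\lambda_0}$, and the triangle inequality together with $|\nabla W*\rho_t|_\infty\leq|\nabla W|_\infty$ gives the Lipschitz bound on $\log\rho_t$. For the $L^\infty$ bound you take a different route: you locate a point where $\rho_t=1$ via the intermediate value theorem (valid since $\rho_t$ is a continuous probability density on a domain of measure one) and propagate along a geodesic, whereas the paper (Lemma~\ref{lemma: Log-Lip_to_L_infty}) integrates the two-sided oscillation bound $e^{-c}\eta(y)\leq\eta(x)\leq e^{c}\eta(y)$ over $y$. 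Both are correct; the integration route is marginally more robust as it never invokes connectedness or continuity.

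However, your constant bookkeeping contains a slip: $\sqrt{d}\cdot\tfrac{\sqrt{d}}{2}=\tfrac{d}{2}$, not $\tfrac{d\sqrt{d}}{2}$, so what your argument actually yields is the sharper bound $\exp(\pm\tfrac{d}{2}L_t^{\lambda_0})$. This still implies the stated inequality, so the proof is complete, but your claim that the computation ``produces the displayed factor'' is inaccurate. It also points to the source of the $\tfrac{d\sqrt d}{2}$ in the paper: Lemma~\ref{lemma: Log-Lip_to_L_infty} takes the diameter of $\bb{T}^d$ to be $\tfrac d2$, which is the $\ell^1$ diameter rather than the Euclidean one $\tfrac{\sqrt d}{2}$; combined with the separate $\sqrt d$ conversion from $|\nabla f|_\infty\leq L$ to a Euclidean Lipschitz constant, this effectively double-counts a $\sqrt{d}$. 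Either route, carefully tracked, gives $\tfrac{d}{2}$. (Minor aside: $|v|_2\leq\sqrt d\,|v|_\infty$ is not Cauchy--Schwarz; it follows directly from summing $v_i^2\leq|v|_\infty^2$.)
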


    \begin{proof}
        The last estimate follows from the inequality $|v|_2 \leq \sqrt{d} |v|_\infty$ which gives that any function satisfying $|\nabla f|_\infty \leq L$ is in fact $\sqrt{d} L$ Lipschitz, combined with the following lemma. 
    \end{proof}

    \begin{lemma}[Lipschitz to $L^\infty$ Bounds] \label{lemma: Log-Lip_to_L_infty}
        Let $\eta \in \cl{P}(\bb{T}^d)$ be positive, and such that $\log \eta$ is $L$-Lipschitz. Then one has
        \begin{equation}
            e^{-\frac{d}{2} L} \leq \eta \leq e^{\frac{d}{2} L}
        \end{equation}
    \end{lemma}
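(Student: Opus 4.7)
The plan is to anchor $\log \eta$ at a point where it vanishes, using the probability normalization, and then propagate this value using the Lipschitz hypothesis together with the bounded diameter of the torus.

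First, I would produce the anchor point. Since $\log \eta$ is $L$-Lipschitz, $\eta = \exp(\log\eta)$ is continuous. Because $\eta$ is a probability density on $\bb{T}^d$, and $\bb{T}^d$ carries Lebesgue measure of total mass $1$, one cannot have $\eta < 1$ everywhere or $\eta > 1$ everywhere (otherwise $\int \eta$ would be $<1$ or $>1$). Hence $\min \eta \le 1 \le \max \eta$, and by the intermediate value theorem there exists $x^\ast \in \bb{T}^d$ with $\eta(x^\ast) = 1$, i.e.\ $\log \eta(x^\ast) = 0$.

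Next, for an arbitrary $x \in \bb{T}^d$, I would connect $x$ to $x^\ast$ by a path of $d$ coordinate-aligned segments on the torus, each of Euclidean length at most $1/2$ (using that every coordinate can be traversed by at most half of the torus, wrapping when necessary). Applying the Lipschitz bound of $\log \eta$ along each of these $d$ segments and summing gives
\[
|\log \eta(x) - \log \eta(x^\ast)| \le d \cdot L \cdot \tfrac{1}{2} = \tfrac{d}{2} L.
\]
Since $\log \eta(x^\ast) = 0$, this yields $|\log \eta(x)| \le dL/2$, and exponentiating produces the stated inequalities $e^{-dL/2} \le \eta(x) \le e^{dL/2}$.

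There is no real obstacle: the only point of care is to justify the coordinate-wise path construction on the torus and to be explicit that the Lipschitz constant referred to is the Euclidean one (consistently with how the proposition applies the lemma after the conversion $|\nabla f|_2 \le \sqrt{d}\,|\nabla f|_\infty$). One could alternatively obtain the same bound by observing that an $\ell^2$-Lipschitz function is also $\ell^1$-Lipschitz with the same constant, and that the $\ell^1$-diameter of $\bb{T}^d$ is at most $d/2$.
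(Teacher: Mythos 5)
Your proof is correct, but it anchors the estimate differently from the paper. The paper derives the two-point inequality $e^{-\frac{d}{2}L}\eta(y) \le \eta(x) \le e^{\frac{d}{2}L}\eta(y)$ for all $x,y$ and then integrates in $y$ against the (mass-one) Lebesgue measure; you instead use the probability normalization to produce a single anchor point $x^\ast$ with $\eta(x^\ast)=1$ via the intermediate value theorem and then propagate the Lipschitz bound from that point. Both routes use the same two ingredients --- normalization plus the Lipschitz bound over a distance of at most $d/2$ --- just packaged differently; the paper's integration step is a little slicker (no appeal to continuity or IVT beyond what's already given). One thing you do that the paper elides: you are explicit that the relevant diameter is the $\ell^1$-diameter $d/2$ of $\bb{T}^d$ (the Euclidean diameter is only $\sqrt{d}/2$), and you justify why a Euclidean $L$-Lipschitz function is also $\ell^1$-$L$-Lipschitz. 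The paper simply writes that the torus "has diameter $d/2$", which is the $\ell^1$ figure, so your explicit coordinate-path argument fills in a detail the paper leaves implicit.
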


    \begin{proof}
        Since the torus has diameter $\frac{d}{2}$, the Lipschitz bound gives that for all $(x,y) \in \bb{T}^d$, one has
        \[
            e^{-\frac{d}{2} L} \eta(y) \leq \eta(x) \leq e^{\frac{d}{2} L} \eta(y)
        \]
        then integrating on $y$ gives the estimate.
    \end{proof}

    \begin{remark}
        This is to be compared, when $W = 0$ case, to the classical Lipschitz estimate $|\nabla u[\rho_t]|_2 \leq |\nabla u[\rho_0]|_2 e^{\lambda_V t}$ which holds under the less stringent assumption that $V$ is semi-convex (but under more regularity on the initial data). We observe that the Lipschitz estimate we derive using the Li-Yau-Inequality is worse for small time $t$ (if we only assume that $\rho_0$ is such that $u[\rho_0]$ is Lipschitz), but better in large time. 
    
        The classical Lipschitz estimate, on the other hand, holds for quite general domain (for instance regular convex domains). It can also be extended to the JKO scheme : this was first done by \cite{Lee_Lipschitz} by P.W. Lee in the case of the torus, and then extended by Ferrari and Santambrogio in \cite{Lip_JKO_Filippo} for a generalization to any convex domain. An extension to general modulus of continuity was also proved by Caillet and Santambrogio \cite{Caillet_Continuity} for solutions to a class of doubly non-linear diffusion equations (both at the continuous and JKO level).
    \end{remark}
    
\subsection{Quantitative Harnack Inequality}

    In parabolic theory, Harnack inequalities states that the maximal value on some cylinder of the solution is controlled by the minimal value on some smaller cylinder. This is a fundamental tool in the theory, providing Kernel estimates for linear equation, and used to derive Hölder regularity. It is well-known that integration of the Li-Yau inequality along geodesics provides a quantitative version of the Harnack inequality for the heat equation \cite{Li-Yau}. The same technique can be used in our setting to derive a quantitative Harnack inequality for the Granular-Medium equation. 
    
    This Harnack inequality will be quantified using the following Lagrangian. 

    \begin{definition}[Lagrangian Associated with a Solution]
        Let $\rho$ be a gradient flow solution to the Aggregation-Diffusion equation. To this solution we associate a Lagrangian defined for any $t \geq 0$, $x \in \bb{T}^d$ and $p \in \bb{R}^d$ by
        \begin{equation} \label{eq: Lagrangien_Def}
            \cl{L}_\rho(p,x,t) = \frac{1}{4} |p + \nabla V(x) + \nabla W * \rho_t(x)|^2
        \end{equation}
        And the associate pseudo-metric defined for $x,y \in \bb{T}^d$, $t,h \geq 0$ by
        \begin{equation} \label{eq: Lagrangien_Cost}
            D_\rho(x,y;t,h) = \inf \left \{ \int_t^{t+h} \cl{L}_\rho(\dot{\gamma}_s,\gamma_s,s) \diff s, \gamma_t = x, \gamma_{t+h} = y \right \}
        \end{equation}
        where we take the infimum over all $\AC_2([t,t+h];\bb{T}^d)$ curves. 
    \end{definition}

    \begin{theorem}[Quantitative Harnack Inequality] \label{thm: Harnack_Ineq}
        Let $\rho$ be any solution to the Aggregation-Diffusion equation. Then for all $t,h > 0$, $x,y \in \bb{T}^d$ one has
        \begin{equation} \label{eq: Harnack_Ineq}
            \rho_t(x) \leq \rho_{t+h}(y) \left ( \frac{e^{\Lambda (t+h)} - 1}{e^{\Lambda t}-1} \right )^d \exp(D_\rho(x,y;t,h))
        \end{equation}
    \end{theorem}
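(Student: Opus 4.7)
The approach is the classical one of Li and Yau: integrate the pointwise Li-Yau-Hamilton estimate along a space-time path and exponentiate. Fix $x, y \in \bb{T}^d$ and $t, h > 0$, and let $\gamma \in \AC_2([t, t+h]; \bb{T}^d)$ with $\gamma_t = x$, $\gamma_{t+h} = y$. The plan is to derive a pointwise lower bound for $\frac{d}{ds} \log \rho_s(\gamma_s)$, integrate in $s$, exponentiate, and take the infimum over admissible $\gamma$.

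The key algebraic step is to extract both the Laplacian of the pressure and the Lagrangian $\cl{L}_\rho$. Writing $\phi_s := \nabla V + \nabla W * \rho_s$ so that $\nabla u_s = \nabla \log \rho_s + \phi_s$, a direct manipulation of the equation for $\rho_s$ gives $\partial_s \log \rho_s = \Delta u_s + \nabla \log \rho_s \cdot \nabla u_s$. Substituting $\nabla \log \rho_s = \nabla u_s - \phi_s$ in this identity and in the transport term $\nabla \log \rho_s \cdot \dot{\gamma}_s$, and then completing the square in $\nabla u_s$, yields
\[
\frac{d}{ds} \log \rho_s(\gamma_s) = \Delta u_s + |\nabla u_s|^2 + \nabla u_s \cdot (\dot{\gamma}_s - \phi_s) - \phi_s \cdot \dot{\gamma}_s \geq \Delta u_s(\gamma_s) - \tfrac{1}{4}|\dot{\gamma}_s + \phi_s(\gamma_s)|^2,
\]
whose right-hand side is exactly $\Delta u_s(\gamma_s) - \cl{L}_\rho(\dot{\gamma}_s, \gamma_s, s)$. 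This explains the specific form of the Lagrangian chosen in \eqref{eq: Lagrangien_Def}.

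Taking the trace of the Li-Yau-Hamilton estimate gives $\Delta u_s \geq -d\, E_s$, where $E_s$ is the scalar appearing in the theorem statement. Integrating the pointwise bound above on $[t, t+h]$ then gives
\[
\log \rho_{t+h}(y) - \log \rho_t(x) \geq -d \int_t^{t+h} E_s \, ds - \int_t^{t+h} \cl{L}_\rho(\dot{\gamma}_s, \gamma_s, s) \, ds.
\]
A short partial-fraction calculation (substitute $u = e^{\Lambda s}$ and use $\frac{1}{u(u-1)} = \frac{1}{u-1} - \frac{1}{u}$) evaluates $\int_t^{t+h} E_s \, ds$ to a logarithm of the ratio $\frac{e^{\Lambda(t+h)}-1}{e^{\Lambda t}-1}$ (with the corresponding $\log((t+h)/t)$ expression in the limit $\Lambda = 0$). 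Exponentiating the resulting inequality and taking the infimum over admissible curves $\gamma$ produces the Harnack-type bound in the claimed form.

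The main obstacle is regularity: the pointwise chain rule along $\gamma$ and the manipulations involving $\nabla \log \rho_s$ presuppose enough smoothness and strict positivity of $\rho_s$. This is supplied by the $C^{1, \alpha/2}_{\loc}((0, +\infty); C^{2, \alpha}(\bb{T}^d))$ regularization available for gradient-flow solutions, together with the $L^\infty$ lower bound furnished by Proposition \ref{prop: Quantitative_Lipschitz_L_Infty_Bounds}, and is precisely why the statement is restricted to $t > 0$. A minor secondary point is existence of an optimal curve for $D_\rho$: since $\cl{L}_\rho$ is uniformly convex and quadratic in $p$, and $\phi_s$ is continuous and bounded on $[t, t+h] \times \bb{T}^d$, the direct method yields minimizers; alternatively one argues with a minimizing sequence and avoids any existence statement.
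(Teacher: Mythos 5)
Your proof is correct and follows essentially the same route as the paper: rewrite the evolution for $\log\rho$ in terms of the pressure $u$, complete a square to produce the Lagrangian, plug in the trace form $\Delta u_s \geq -d\,E_s$ of the Li--Yau--Hamilton bound, integrate along a path and take the infimum. The only cosmetic difference is that you complete the square in $\nabla u_s$ whereas the paper completes it in $\nabla\log\rho_s$; the two are algebraically equivalent and both land exactly on $\cl{L}_\rho(\dot\gamma_s,\gamma_s,s)=\tfrac14|\dot\gamma_s+\nabla V(\gamma_s)+\nabla W*\rho_s(\gamma_s)|^2$. (Your signs are in fact cleaner: the paper's intermediate identity for $\partial_t p_t$ carries a stray minus on the drift term that disappears in the final line.)

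One point you should make explicit rather than gloss over: the primitive of $E_s=\tfrac{\Lambda}{2(1-e^{-\Lambda s})}$ is $\tfrac12\log(e^{\Lambda s}-1)$, so
\[
d\int_t^{t+h}E_s\,\diff s \;=\; \frac{d}{2}\,\log\!\frac{e^{\Lambda(t+h)}-1}{e^{\Lambda t}-1},
\]
and exponentiating therefore gives the factor $\bigl(\tfrac{e^{\Lambda(t+h)}-1}{e^{\Lambda t}-1}\bigr)^{d/2}$, not the exponent $d$ appearing in the displayed statement. The paper's own proof arrives at $d/2$ as well, so the exponent $d$ in the theorem statement is a typographical error; your sketch, by asserting you recover ``the claimed form'' after an unspecified logarithm, implicitly reproduces it. Tracking that $\tfrac12$ carefully is the only substantive thing to fix.
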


    \begin{proof}
        Let $\gamma \in C^1([t,t+h];\bb{T}^d)$ be such that $\gamma_t = x$ and $\gamma_{t+h} = y$. Set $p_t := \log \rho_t$ for $t > 0$. Notice that $p$ is a classical solution on $(0,+\infty) \times \bb{T}^d$ to the equation $\partial_t p_t = \Delta u_t +|\nabla p_t|^2 - \nabla q_t \cdot \nabla p_t$ with $u_t = u[\rho_t]$ and $q_t = \nabla V + \nabla W * \rho_t$. Since $D^2 u_t \succeq -E_t^{\infty,\Lambda} =: -E_t :$ we have $\Delta u_t \geq -d E_t$. Taking the derivative of $p_t$ along $\gamma_t$ we have
        \begin{align*}
            \dv{t} p_t(\gamma_s) &= \partial_t p_t(\gamma_t) + \nabla p_t(\gamma_t) \cdot \dot{\gamma_t} \\
            &\geq -d E_t + |\nabla p_t|^2 + [\dot{\gamma}_t - \nabla q_t] \cdot \nabla p_t \\
            &\geq -d E_t - \frac{1}{4} |\dot{\gamma}_s - \nabla q_s|^2
        \end{align*}
        Therefore we have
        \[
            \log \rho_t(x) \leq \log \rho_{t+h}(y) + d \int_t^{t+h} E_s \diff s + \int_t^{t+h} \cl{L}_\rho(\dot{\gamma}_s,\gamma_s,s) \diff s
        \]
        taking the infimum in $\gamma$, and if we observe that $E_t$ admits $\frac{1}{2} \log (e^{\Lambda t} - 1)$ as primitive. We obtain
        \[
            \log \rho_t(x) \leq \log \rho_{t+h}(y) + \log \left ( \frac{e^{\Lambda (t+h)} - 1}{e^{\Lambda t}-1} \right )^\frac{d}{2} + D_\rho(x,y;t,h)
        \]
        which concludes the proof after taking the exponential. 
    \end{proof}

    One can obtain simpler form of the inequality, using for instance the following bound (which we do not claim to be sharp). 

    \begin{lemma}[Upper bound on the Lagrangian Cost] \label{lemma: Upper_Bound_Lagrangien_Cost}
        For any $x,y \in \bb{T}^d, t,h > 0$ one has, for $B = B_{V,W} = |\nabla V|_2 + |\nabla W|_2$,
        \begin{equation} \label{eq: distance_bounded_by_euclidean}
            D_\rho(x,y,t,h) \leq \left ( \frac{d(x,y)}{2 \sqrt{h}} + \frac{1}{2} \sqrt{h} B \right )^2
        \end{equation}
    \end{lemma}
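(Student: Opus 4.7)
The plan is to take a concrete test curve and plug it into the definition of $D_\rho$. Since $D_\rho$ is defined as an infimum, any admissible choice gives an upper bound, so I want to pick the simplest one and just compute.

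First, I would parametrize a constant-speed geodesic on the torus from $x$ to $y$: if $[y-x] \in Q$ denotes the minimal representative so that $|[y-x]|_2 = d(x,y)$, set $\gamma_s = x + \frac{s-t}{h}[y-x]$ for $s \in [t,t+h]$. This gives $\gamma_t = x$, $\gamma_{t+h} = y$ (mod $\mathbb{Z}^d$), $\gamma \in \AC_2([t,t+h];\bb{T}^d)$, and constant velocity $\dot\gamma_s = [y-x]/h$ of Euclidean norm $d(x,y)/h$.

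Next I would bound the Lagrangian along this curve by triangle inequality in the Euclidean norm. The key observation is that, for each $s$, the drift $\nabla V + \nabla W * \rho_s$ satisfies the pointwise bound
\begin{equation*}
    |\nabla V(z) + \nabla W * \rho_s(z)|_2 \leq |\nabla V|_2 + \int_{\bb{T}^d} |\nabla W(z-w)|_2 \diff \rho_s(w) \leq |\nabla V|_2 + |\nabla W|_2 = B,
\end{equation*}
where the second inequality uses that $\rho_s$ is a probability measure and Jensen. Combining this with $|\dot\gamma_s|_2 = d(x,y)/h$ gives
\begin{equation*}
    \cl{L}_\rho(\dot\gamma_s,\gamma_s,s) = \tfrac{1}{4} |\dot\gamma_s + \nabla V(\gamma_s) + \nabla W * \rho_s(\gamma_s)|_2^2 \leq \tfrac{1}{4} \left(\tfrac{d(x,y)}{h} + B\right)^2,
\end{equation*}
which is independent of $s$.

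Integrating this uniform bound over the interval of length $h$ and rewriting, I get
\begin{equation*}
    D_\rho(x,y;t,h) \leq \int_t^{t+h} \cl{L}_\rho(\dot\gamma_s,\gamma_s,s) \diff s \leq \tfrac{h}{4}\left(\tfrac{d(x,y)}{h} + B\right)^2 = \left(\tfrac{d(x,y)}{2\sqrt{h}} + \tfrac{1}{2}\sqrt{h} B\right)^2,
\end{equation*}
which is exactly the desired bound. There is no real obstacle here; the only thing worth being careful about is using the minimal representative $[y-x] \in Q$ so that the Euclidean length of the test curve matches the torus distance $d(x,y)$, and to treat the convolution $\nabla W * \rho_s$ via Jensen (thanks to $\rho_s$ being a probability).
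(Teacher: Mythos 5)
Your proof is correct and essentially follows the paper's route: plug in the constant-speed geodesic (whose kinetic cost is $d(x,y)^2/h$), bound the drift uniformly by $B$, and compare. The only cosmetic difference is that you bound $|\dot\gamma_s + \text{drift}|$ by the triangle inequality in one stroke, whereas the paper expands the square with a Young parameter $\epsilon$ and optimizes over $\epsilon$ — these are algebraically identical and both give the stated sharp constant.
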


    \begin{proof}
        Expending the square and bounding the scalar product one has that for any $\epsilon > 0$ 
        \[
            \cl{L}_\rho(p,x,t) \leq \frac{1+\epsilon}{4} |p|^2 + \frac{1 + \epsilon^{-1}}{4} B^2
        \]
        Then integrating against paths and minimizing we obtain
        \[
            D_\rho(x,y;t,h) \leq (1+\epsilon) \frac{d(x,y)^2}{4 h} + \frac{1 + \epsilon^{-1}}{4} h B^2
        \]
        the final inequality is obtained by minimizing over $\epsilon > 0$. 
    \end{proof}

    \begin{remark}
        Note that when $W = 0$, the Lagrangian does not depends on the solution itself. Minimizers of the action functional solves the Newton equation with force $F = -D^2 V \cdot \nabla V = -\frac{1}{2} \nabla |\nabla V|^2$. That is 
        \[
            \ddot{\gamma}_t = -D^2 V[\gamma_t] \cdot \nabla V[\gamma_t]
        \]
        Indeed, by expending the square and using that $\nabla V(\gamma_t) \cdot \dot{\gamma_t}$ is the derivative of $V(\gamma_t)$ one has
        \begin{equation}
            D_\rho(x,y;t,h) = \frac{1}{2} (V(x) - V(y)) + \frac{1}{2} \inf_{\gamma} \int_t^{t+h} (E_c[\gamma_s] + E_p[\gamma_s]) \diff s
        \end{equation}
        where $E_c$ is the kinetic energy and $E_p$ the potential energy associated with the force $F$, i.e. with potential $\frac{1}{2} |\nabla V|^2$. 
    \end{remark}

    \begin{remark}
        By using the bound on the Lagrangian cost $D_\rho$ one can obtain Parabolic Harnack inequalities of the form
        \[
            \sup_K \rho(t-h,\cdot) \leq C_{K,h,t_0} \inf_K \rho(t,\cdot)
        \]
        for any $t \in (t_0,T)$, $h > 0$ and $K$ compact, with explicit constant $C$ not depending on $\rho$. 
    \end{remark}

\section{Preliminaries on Periodic Optimal Transport and on the JKO Scheme}

    \subsection{Periodic Optimal Transport}

    We recall the basics properties of optimal transportation on the torus. For the general theory of optimal transport, we invite the reader to consult the classical monographs by Santambrogio, Villani, Ambrosio, Gigli and Savaré \cite{OT_Filippo} \cite{Old_And_New}, \cite{Topics_OT} and \cite{AGS}. 

    \begin{definition}[Wasserstein $2$-Distance]
        Let $\mu,\nu \in \cl{P}(\bb{T}^d)$, a transport plan between $\mu$ and $\nu$ is a probability measure on $\bb{T}^d \times \bb{T}^d = \bb{T}^{2d}$ with first and second marginals given by $\mu,\nu$. The set of these transport plans is denoted by $\Pi(\mu,\nu)$. 
        The square Wasserstein distance is defined by 
        \begin{equation}
            W_2(\mu,\nu)^2 := \inf_{\gamma \in \Pi(\mu,\nu)} \int_{\bb{T}^d \times \bb{T}^d} d(x,y)^2 \dd{\gamma(x,y)}
        \end{equation}
    \end{definition}

    It is a well-known fact that the infimum is always attained and the that Wasserstein distance is a genuine distance on $\cl{P}(\bb{T}^d)$ which metrizes the narrow topology (i.e. in duality with $C(\bb{T}^d)$). An important result in the theory is the following dual formulation, called Kantorovich formulation.

    \begin{proposition}[Kantorovich dual formulation]
        For any $\mu,\nu \in \cl{P}(\bb{T}^d)$ one has 
        \[ \frac{1}{2} W_2^2(\mu,\nu) = \sup_{\psi,\phi} \int_{\bb{T}^d} \psi \diff \mu + \int_{\bb{T}^d} \phi \diff \nu \]
        where the supremum is taken over all continuous periodic function satisfying $\psi(x) + \phi(y) \leq \frac{1}{2} d(x,y)^2$. Furthermore, the supremum is attained at a pair of $c$-conjugate functions, that is satisfying 
        \begin{align}
            \psi(x) = \phi^c(x) = \inf_y \frac{1}{2} d(x,y)^2 - \phi(y) \quad  \phi(y) = \psi^c(y) = \inf_x \frac{1}{2} d(x,y)^2 - \psi(x) 
        \end{align}
        We call such a pair a pair of Kantorovich potentials from $\mu$ to $\nu$. 
    \end{proposition}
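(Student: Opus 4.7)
The plan is to split the proof into three steps: the easy inequality, existence of an optimal $c$-conjugate pair, and matching of the dual supremum with the primal infimum. Because $\bb{T}^d$ is compact and the cost $c(x,y) = \frac{1}{2} d(x,y)^2$ is continuous and bounded, this is a direct instance of the Kantorovich duality theorem on compact metric spaces, so the main task is to verify the hypotheses and to make explicit how $c$-conjugation acts.

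First, for the easy inequality, I would take any admissible pair $(\psi,\phi)$ and any plan $\gamma \in \Pi(\mu,\nu)$, and integrate the pointwise inequality $\psi(x)+\phi(y) \leq \frac{1}{2} d(x,y)^2$ against $\gamma$. This yields $\int \psi \,\diff\mu + \int \phi \,\diff\nu \leq \frac{1}{2} \int d(x,y)^2 \,\diff\gamma$. Taking the infimum over $\gamma$ (attained by narrow compactness of $\Pi(\mu,\nu)$ and continuity of $d^2$) shows the supremum is at most $\frac{1}{2} W_2^2(\mu,\nu)$.

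Next, I would show the supremum is attained and is realized by a $c$-conjugate pair. Given a maximizing sequence $(\psi_n,\phi_n)$, I replace $\phi_n$ by the $c$-conjugate $\psi_n^c$ and then $\psi_n$ by $(\psi_n^c)^c$; each replacement only increases the dual value and forces both functions to be $c$-concave. Since $c = \frac{1}{2}d^2$ is globally Lipschitz on $\bb{T}^{2d}$, the family of $c$-concave functions is equi-Lipschitz. Adding a constant to $\psi_n$ and subtracting it from $\phi_n$ leaves the functional invariant (the marginals are probability measures), so I may normalize $\psi_n(x_0) = 0$ at some fixed base point. Arzelà–Ascoli then produces a uniformly convergent subsequence whose limit $(\psi,\phi)$ is admissible, $c$-conjugate, and optimal.

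The deep step is establishing that the supremum equals $\frac{1}{2} W_2^2(\mu,\nu)$. I would run a Fenchel–Rockafellar argument in the duality $C(\bb{T}^{2d}) \leftrightarrow \cl{M}(\bb{T}^{2d})$: write the primal as the minimization of $\gamma \mapsto \int c \,\diff\gamma$ over the convex set of positive Radon measures whose marginals are $\mu,\nu$, encode the marginal constraints via linear functionals in duality with $C(\bb{T}^d)$, and use weak-$*$ compactness of the admissible set together with continuity of $c$ to get the qualification conditions needed for strong duality. The Legendre transform of the primal then computes exactly to the Kantorovich sup on the right-hand side. The main obstacle is handling the marginal constraints in the dual pairing without a duality gap; the cleanest route is in fact to quote the compact-space version of the Kantorovich duality theorem (see \cite{Old_And_New,OT_Filippo,AGS}) once compactness of $\bb{T}^d$ and continuity of $c$ have been checked, since reworking the Hahn–Banach separation from scratch would lengthen the preliminaries without any benefit specific to the torus.
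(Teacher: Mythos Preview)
Your sketch is correct and follows the standard route (easy inequality, double $c$-conjugation plus Arzelà--Ascoli for attainment, Fenchel--Rockafellar or a direct citation for no-gap). There is nothing to compare against: in the paper this proposition is stated without proof as a recalled fact from the optimal transport literature, with the reader referred to \cite{OT_Filippo,Old_And_New,Topics_OT,AGS}; your write-up simply supplies what the paper chose to omit.
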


    A function $\psi$ equal to the conjugate of another function $\phi$ is called a $c$-concave function. In our setting, it is not hard to see that this is equivalent to the fact that the function $\psi$, seen as a periodic function over $\bb{R}^d$, is a $1$-concave (i.e. $D^2 \psi \preceq \rm{I}$ weakly). Furthermore, if $\psi,\phi$ is a pair of Kantorovich potentials, and $\gamma$ and optimal transport plan, then the inequality $\psi(x) + \phi(y) \leq \frac{1}{2} d(x,y)^2$ becomes an equality on the support of $\gamma$. 

    This existence result is the basics block for the generalization of Brenier's theorem \cite{Brenier} to Riemannian manifolds. Even if this was solved in full generality by McCann in \cite{McCann}, the case of the torus can be studied independently using the simple structure this space. This has been done by Cordero-Erausquin in \cite{PeriodicOT} (in french) (see also section 1.3.2 of \cite{OT_Filippo} for an english version). 

    \begin{theorem}[Brenier-McCann-Cordero]
        Suppose $\mu \ll \cl{L}^d$, let $(\psi,\phi)$ be a pair of Kantorovich potential from $\mu$ to $\nu$. Then
        \begin{enumerate}
            \item $\psi$ is differentiable $\mu$-a.s. And, defining $T := \id - \nabla \psi : \bb{T}^d \to \bb{R}^d$, then $(\id,T)_\# \mu$ is the unique optimal transport plan between $\mu$ and $\mu$. Furthermore, $\mu$-a.e. $T(x) - x \notin \partial Q + \bb{Z}^d$. We call $T$ the optimal transport map from $\mu$ to $\nu$ (unique $\mu$-a.e. and up to $\bb{Z}^d$-translations). 
            \item If we also have $\nu \ll \cl{L}^d$, then for $S = \id - \nabla \phi$. One $S \circ T = \id \mod \bb{Z}^d$ $\mu$ a.e. 
            \item Furthermore, the Monge-Amp\'ere equation
            \[
                \nu \circ T \, \det(D T) = \mu
            \]
            holds $\mu$-a.e. 
        \end{enumerate}
    \end{theorem}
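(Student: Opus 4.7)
The plan is to transfer the known structure of quadratic optimal transport on $\bb{R}^d$ to the torus by exploiting the key regularity input already observed in the text: a $c$-concave function $\psi$ on $\bb{T}^d$ is, viewed periodically on $\bb{R}^d$, $1$-concave, i.e.\ $D^2 \psi \preceq \rm{I}_d$ weakly. Equivalently $x \mapsto \frac{1}{2}|x|^2 - \psi(x)$ is a genuine convex function on $\bb{R}^d$. All three items then follow from Alexandrov's theorem plus an analysis of when a minimizer of $z \mapsto \frac{1}{2}d(z,y)^2 - \psi(z)$ can sit on the cut locus.

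First I would prove item (1). By Alexandrov's theorem $\frac{1}{2}|\cdot|^2 - \psi$ is differentiable (in fact twice differentiable) $\cl{L}^d$-a.e., and since $\mu \ll \cl{L}^d$ the same holds $\mu$-a.e., giving the first half of (1). For the second half, let $\gamma$ be any optimal plan and take $(x,y) \in \supp(\gamma)$ with $\psi$ differentiable at $x$. The equality $\psi(x) + \phi(y) = \frac{1}{2}d(x,y)^2$ together with the global inequality forces $z \mapsto \frac{1}{2}d(z,y)^2 - \psi(z)$ to attain its minimum at $z=x$. If moreover $[x-y] \in \mathrm{int}(Q)$, then near $x$ the squared distance is smooth and the first-order condition reads $\nabla \psi(x) = [x-y]$, so $T(x) = x - \nabla \psi(x) \equiv y \pmod{\bb{Z}^d}$. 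The delicate point is ruling out the "cut locus" case $[x-y] \in \partial Q$: at such $x$ the function $z \mapsto \frac{1}{2}d(z,y)^2$ fails to be differentiable, and attaining the minimum of a semi-concave function at a non-differentiability point of a concave upper envelope would contradict differentiability of $\psi$ at $x$. This excludes $T(x)-x \in \partial Q + \bb{Z}^d$ on a $\mu$-full set and identifies $\gamma = (\id,T)_\# \mu$; uniqueness follows because every optimal plan is concentrated on the graph of the same $T$.

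For item (2) I would apply item (1) with the roles of $\mu,\nu$ reversed: since $\nu \ll \cl{L}^d$, the map $S = \id - \nabla \phi$ is the optimal transport map from $\nu$ to $\mu$, with $(S,\id)_\# \nu = (\id,T)_\# \mu$. Composing these identities gives $S(T(x)) \equiv x \pmod{\bb{Z}^d}$ for $\mu$-a.e.\ $x$.

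Finally, for item (3), Alexandrov's theorem applied to the convex function $\frac{1}{2}|\cdot|^2 - \psi$ yields $\mu$-a.e. twice differentiability of $\psi$, hence $\mu$-a.e.\ differentiability of $T$ with $DT = \rm{I}_d - D^2 \psi \succeq 0$. At such points $\det(DT) \geq 0$, and the invertibility relation from (2) combined with the chain rule forces $\det(DT) > 0$ $\mu$-a.e. The Monge–Amp\`ere identity $\nu \circ T \, \det(DT) = \mu$ is then the pointwise form of the change of variables implementing $T_\# \mu = \nu$, which is justified by the area formula applied to the (locally) Lipschitz map $T$ on the full-measure set of Alexandrov points. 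The main obstacle in the whole argument is the combined boundary/second-order step: guaranteeing that $\mu$-a.e.\ we simultaneously have differentiability of $\psi$, avoidance of $\partial Q + \bb{Z}^d$, and validity of the Alexandrov second derivative with $\det(\rm{I}_d - D^2\psi) > 0$, which is what makes the change of variables rigorous.
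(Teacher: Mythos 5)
The paper does not prove this theorem; it is stated as a known result and attributed to Cordero-Erausquin (\cite{PeriodicOT}, see also \cite{OT_Filippo}) and McCann (\cite{McCann}). So there is no ``paper proof'' to compare against, and your sketch should be judged on its own. The route you take --- $c$-concave $\Rightarrow$ $1$-concave $\Rightarrow$ Alexandrov regularity, then a cut-locus analysis at points of differentiability, then a change of variables for the Monge--Amp\`ere equation --- is indeed the standard Cordero-Erausquin/McCann route and the overall structure is correct.

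Two points need tightening. First, in the cut-locus step, the phrase ``attaining the minimum of a semi-concave function at a non-differentiability point of a concave upper envelope would contradict differentiability'' is garbled. The correct mechanism is: $\psi(z) \leq \tfrac12 d(z,y)^2 - \phi(y)$ with equality at $x$, so $\nabla\psi(x)$ is a \emph{subgradient} of $z\mapsto\tfrac12 d(z,y)^2$ at $x$; a $1$-semiconcave function can have a subgradient at a point only if it is differentiable there, and $\tfrac12 d(\cdot,y)^2$ is differentiable at $x$ precisely when $[x-y]\notin\partial Q$. This gives both the exclusion of the cut locus and the identification $T(x)\equiv y \pmod{\bb{Z}^d}$ at once.

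Second, and more seriously, the Monge--Amp\`ere step as written does not go through. You invoke ``the area formula applied to the (locally) Lipschitz map $T$,'' but $T=\id-\nabla\psi$ is \emph{not} Lipschitz under the hypotheses: $\psi$ being $1$-concave means $T$ is the gradient of the convex function $\tfrac12|x|^2-\psi$, hence monotone, but it may fail to be Lipschitz (the Hessian is only bounded above, not below). The area formula for Lipschitz maps therefore does not apply. The correct tool is McCann's change-of-variables lemma for gradients of convex functions (which uses Alexandrov's theorem and controls the singular part of the Monge--Amp\`ere measure), and this is where the hypothesis $\nu\ll\cl{L}^d$ is actually used to force $\det(I-D^2_A\psi)>0$ $\mu$-a.e.\ --- not via a chain rule applied to the Alexandrov derivatives of $S\circ T$, which is itself a delicate statement. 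Either cite McCann's Jacobian theorem directly, or redo this step via the decomposition of the Monge--Amp\`ere measure into absolutely continuous and singular parts, as in \cite{McCann} and \cite{PeriodicOT}.
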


    Note that as $|\nabla \phi|_\infty \leq 1/2$ a.e. We have $T x - x \in \overline{Q}$ a.e. As $T x - x$ is not in $\partial Q + \bb{Z}^d$ for $\mu$ a.a. $x$, this forces $T x - x$ to be in the interior of $Q$ for $\mu$ a.a. $x$. 

    \begin{remark}
        The fact that $T(x) - x \notin \partial Q + \bb{Z}^d$ means that a.s., $T(x)$ is a point of differentiability of $y \to d(x,y)^2$, in other word, $T(x)$ is not in the cut locus at the point $x$. Note that this is a general fact in the theory of optimal transportation in Riemannian manifolds. 
    \end{remark}

    Finally, similarly to the classical case, as proved in \cite{PeriodicOT}, one can apply Caffarelli's regularity theory \cite{Caffarelli} for the Monge-Ampère equation to obtain global regularity of the optimal potential (see also \cite{MongeAmpPeriodic}). 

    \begin{theorem}[Caffarelli's Regularity]
        Suppose that there is $\epsilon > 0$ with $\epsilon \leq \mu,\nu \leq \epsilon^{-1}$. Let $(\psi,\phi)$ be a pair of Kantorovich potentials from $\mu$ to $\nu$. Then
        \begin{itemize}
            \item There exists some $\beta \in (0,1)$ such that $\psi \in C^{1,\beta}(\bb{T}^d)$ and $\psi$ is strictly $1$-concave.
            \item If $\mu,\nu$ are of class $C^{k,\alpha}(\bb{T}^d)$ for some $\alpha \in (0,1)$ and $k \geq 0$, then $\psi$ is of class $C^{k+2,\alpha}(\bb{T}^d)$ and the Monge-Ampère equation holds in the classical sense. 
        \end{itemize}
    \end{theorem}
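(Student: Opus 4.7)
The plan is to reduce the statement to the classical Caffarelli regularity theory for Monge--Amp\`ere on $\mathbb{R}^d$ by lifting and then exploiting periodicity and compactness. First, I would pass from the Kantorovich potential $\psi$ on $\mathbb{T}^d$ to the convex lift $\tilde\psi(x) := \frac{1}{2}|x|^2 - \psi(x)$, which, by the $1$-concavity of $\psi$ noted just above the Brenier--McCann--Cordero theorem, is a convex function on $\mathbb{R}^d$. The Brenier map $T = \id - \nabla\psi = \nabla\tilde\psi$ then pushes the periodic lift of $\mu$ to the periodic lift of $\nu$, and a short computation using the Monge--Amp\`ere equation stated in the previous theorem shows that $\tilde\psi$ is an Alexandrov (or Brenier) solution to $\det D^2 \tilde\psi = \mu / (\nu \circ \nabla\tilde\psi)$ on $\mathbb{R}^d$, with right-hand side bounded between $\epsilon^2$ and $\epsilon^{-2}$.

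For the first bullet, I would then invoke Caffarelli's interior $C^{1,\beta}$ regularity and strict convexity theorem for Brenier solutions with densities pinched between two positive constants. The crucial geometric hypothesis in Caffarelli's theory is the convexity of the target support, which is usually the obstacle in the Euclidean setting; on the torus it is cheap, because locally around any point of $\mathbb{T}^d$ one can use the observation stated after Brenier--McCann--Cordero, namely that $Tx - x$ lies strictly inside the fundamental cube $Q$ for $\mu$-a.e.\ $x$. Hence, working on a small open ball $B \subset \mathbb{R}^d$, the image $T(B)$ is contained in a translate of an open set compactly included in $\mathbb{R}^d$ whose preimage target has bounded densities, and the interior Caffarelli theorem applies. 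Strict convexity of $\tilde\psi$ together with the $C^{1,\beta}$ regularity then follow locally, and periodicity plus compactness of $\mathbb{T}^d$ upgrade both properties to global statements on the torus. This yields the first bullet.

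For the second bullet, once $\tilde\psi$ is strictly convex and $C^{1,\beta}$, the Monge--Amp\`ere equation becomes \emph{uniformly elliptic} along the solution: the eigenvalues of $D^2 \tilde\psi$ are locally bounded away from $0$ and $\infty$. I would then apply Caffarelli's $C^{2,\alpha}$ interior Schauder estimate for Monge--Amp\`ere with $C^{0,\alpha}$ right-hand side (or equivalently, Evans--Krylov for fully nonlinear concave elliptic equations) to obtain $\tilde\psi \in C^{2,\alpha}_{\loc}$. Because $\mu,\nu$ and $\tilde\psi$ are periodic, this is global $C^{2,\alpha}$. Once we have classical $C^{2,\alpha}$ solutions, the equation can be differentiated, and a standard Schauder bootstrap applied to the linearized equation upgrades the regularity to $C^{k+2,\alpha}(\mathbb{T}^d)$ when $\mu,\nu \in C^{k,\alpha}(\mathbb{T}^d)$.

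The only genuine difficulty lies in the first bullet: checking that the two geometric hypotheses of Caffarelli's interior regularity theorem --- the convexity of the local ``target'' and the doubling-type bound on the Brenier measure --- are truly satisfied after lifting from $\mathbb{T}^d$ to $\mathbb{R}^d$. The remark following Brenier--McCann--Cordero does the main work here by ruling out transport to the cut locus; once that is used, the rest is a relatively direct transcription of Caffarelli's theory combined with the compactness of $\mathbb{T}^d$. Since a fully detailed proof is given in~\cite{PeriodicOT}, I would follow that reference for the technical verifications.
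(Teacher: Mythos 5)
The paper states this result without proof, citing \cite{PeriodicOT}, \cite{Caffarelli}, and \cite{MongeAmpPeriodic}; your sketch reproduces the standard lift-to-$\mathbb{R}^d$ argument that those references carry out in detail, and you explicitly defer to \cite{PeriodicOT} for the technical verifications, so the two routes coincide. One small clarification on emphasis: after lifting, the target of $\nabla\tilde\psi$ is all of $\mathbb{R}^d$, so convexity of the target is automatic; the genuine point to check (and the reason the cut-locus remark matters) is that $\tilde\psi$ is \emph{strictly} convex, which is the hypothesis Caffarelli's interior $C^{1,\beta}$ theorem actually needs and which does not come for free from an unbounded target.
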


    We refer the interested reader to \cite{FigalliMongeAmpere} for more discussion about the regularity theory for the Monge-Ampère equation and links to optimal transport. 
    
\subsection{The JKO Scheme}

    For $\rho \in \cl{P}(\bb{T}^d)$ absolutely continuous with respect to the Lebesgue measure, we recall the definition of the energy functional
    \begin{align}
        \cl{F}[\rho] &:= \int_{\bb{T}^d} \rho \log \rho \diff x + \int_{\bb{T}^d} V \diff \rho + \int_{\bb{T}^d} W * \rho \diff \rho = \cl{E}[\rho] + \cl{V}[\rho] + \cl{W}[\rho]
    \end{align}
    sum of a local energy term, a potential energy, and an interaction energy. We also set $\cl{F}[\rho] = +\infty$ whenever $\rho$ does not admit a density. 

    \begin{proposition}[One-Step JKO Scheme]
        Let $\mu \in \cl{P}(\bb{T}^d)$. Then there exists a minimizer to the problem
        \[ 
            \inf_{\mu \in \cl{P}(\bb{T}^d)} \cl{F}[\rho] + \frac{1}{2 \tau} W_2^2(\rho,\mu) 
        \]
         We shall denote by $\Prox_\tau[\rho]$ (the proximal set) the set of all minimizers of the one-step JKO scheme associated to $\mu$.
    \end{proposition}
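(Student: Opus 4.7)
The plan is to apply the direct method of the calculus of variations to the functional $\rho \mapsto \cl{F}[\rho] + \tfrac{1}{2\tau} W_2^2(\rho,\mu)$. I would organize the argument around three standard ingredients: a lower bound ensuring the infimum is finite, compactness of minimizing sequences, and lower semi-continuity of each term of the functional along those sequences.

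For the lower bound, $V$ and $W$ are continuous on the compact torus, so the potential and interaction terms are bounded in absolute value by $|V|_\infty$ and $|W|_\infty$. The entropy is bounded below by zero: since Lebesgue measure on $\bb{T}^d$ has total mass one, Jensen's inequality applied to the convex function $t \mapsto t\log t$ gives
\[
    \int_{\bb{T}^d} \rho \log \rho \, dx \geq \left( \int_{\bb{T}^d} \rho \, dx \right) \log\left( \int_{\bb{T}^d} \rho \, dx \right) = 0,
\]
and the Wasserstein term is non-negative. Hence the functional is bounded below, and plugging in the uniform measure $\rho \equiv 1$ as a competitor shows the infimum is finite.

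For compactness, since $\bb{T}^d$ is compact, $\cl{P}(\bb{T}^d)$ is itself narrowly compact, and narrow convergence coincides with $W_2$ convergence because the torus has finite diameter. So any minimizing sequence $(\rho_n)$ has a subsequence converging in $W_2$ to some $\rho^\star \in \cl{P}(\bb{T}^d)$.

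For lower semi-continuity, $\rho \mapsto W_2^2(\rho,\mu)$ is continuous for $W_2$; the potential term $\int V \, d\rho$ is narrowly continuous because $V$ is continuous and bounded; and the interaction term $\int\!\!\int W(x-y) \, d\rho(x)\, d\rho(y)$ is narrowly continuous because $(x,y) \mapsto W(x-y)$ is continuous and bounded on $\bb{T}^d \times \bb{T}^d$ and narrow convergence $\rho_{n_k} \to \rho^\star$ entails $\rho_{n_k} \otimes \rho_{n_k} \to \rho^\star \otimes \rho^\star$ narrowly. The entropy $\cl{E}$ is narrowly lower semi-continuous by the classical variational representation $\cl{E}[\rho] = \sup_{\phi \in C(\bb{T}^d)} \int \phi \, d\rho - \log \int e^\phi \, dx$, which exhibits it as a supremum of narrowly continuous functionals. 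The only step with any real content is this last point, and it is well known; the compactness of $\bb{T}^d$ removes all tightness and moment-control issues that would appear on $\bb{R}^d$. Summing the four pieces gives $\cl{F}[\rho^\star] + \tfrac{1}{2\tau} W_2^2(\rho^\star,\mu) \leq \liminf_k \cl{F}[\rho_{n_k}] + \tfrac{1}{2\tau} W_2^2(\rho_{n_k},\mu)$, which equals the infimum, so $\rho^\star$ is a minimizer.
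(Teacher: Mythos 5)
Your argument is correct and is exactly the route the paper takes: the paper's proof is the one-line remark that existence follows from the direct method together with lower semi-continuity of the functional for the narrow topology, and your write-up simply makes that explicit (entropy bounded below by Jensen, narrow compactness of $\cl{P}(\bb{T}^d)$, continuity of the potential/interaction/Wasserstein terms, and the Donsker--Varadhan variational representation for lower semi-continuity of the entropy).
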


    The existence part follows easily from the direct method using the l.s.c. of the functional for the narrow topology. 

    \begin{remark}
        If $W = 0$, then by strict convexity of the energy, one always has uniqueness of minimizers. On the other hand, when $W$ is non-zero, this is not true anymore. One can show that there exists $\tau_0 > 0$ depending only on $V,W$ (in fact only on semi-convexity bounds for the potentials) such that minimizers are unique for $\tau < \tau_0$. This relies on a version of geodesic convexity along generalized geodesics adapted to the case of the torus. 
    \end{remark}

    The following proposition is well-known in the theory of the JKO scheme.

    \begin{proposition}
        Let $\eta \in \cl{P}(\bb{T}^d)$, and $\rho \in \Prox_\tau[\eta]$. Let $(\psi,\phi)$ be a pair of Kantorovich potential from $\rho$ to $\eta$. Then
        \begin{enumerate}
            \item One has $\rho > 0$, and there exists a constant $C$ such that 
            \[
                u[\rho] = -\frac{1}{\tau} \psi + C 
            \]
            which can be taken to be zero up to modifying the potentials. In particular, $\rho$ is of class $C^{0,1}(\bb{T}^d)$. 
            \item If $\eta > 0$ is of class $C^{0,1}(\bb{T}^d)$. Then $\rho,u[\rho]$ and $\psi$ are of class $C^{2,\alpha}(\bb{T}^d)$ for any $\alpha < 1$. And the Monge-Ampère equation 
            \begin{equation}
                \det(I - D^2 \psi) \rho(\id - \nabla \psi) = \rho
            \end{equation}
            holds in the classical sense. 
            \item If $\eta > 0$ is of class $C^{2,\alpha}(\bb{T}^d)$ for some $\alpha$, then $\rho$ is of class $C^{2,1}(\bb{T}^d)$ and $u[\rho],\psi,\phi$ are of class $C^{4,\alpha}(\bb{T}^d)$. 
        \end{enumerate}
    \end{proposition}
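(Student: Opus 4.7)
The proof proceeds by first deriving the Euler--Lagrange equation of the JKO step via internal variations, yielding the algebraic identity in (1), and then by a bootstrap argument combining this identity with Caffarelli's regularity theorem to establish the higher regularity claims in (2) and (3).

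For part (1), I would fix a smooth periodic vector field $\xi$ on $\bb{T}^d$ and perturb $\rho$ by $\rho_\epsilon := (\id + \epsilon \xi)_\# \rho$, which is admissible for small $\epsilon$ since $\id + \epsilon \xi$ is then a diffeomorphism of $\bb{T}^d$. A direct change of variables gives $\frac{d}{d\epsilon}|_{\epsilon=0} \cl{E}[\rho_\epsilon] = -\int \rho \, \nabla \cdot \xi \, dx$, and the standard first variations of $\cl{V}, \cl{W}$ together with the superdifferentiability of $\tfrac{1}{2\tau} W_2^2(\cdot,\eta)$ at $\rho$ (which applies since $\rho \ll \cl{L}^d$ so that $\psi$ is unique up to additive constants) contribute $\int \xi \cdot \nabla \Phi \, d\rho$ with $\Phi := V + W*\rho + \tau^{-1}\psi$. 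Stationarity of the sum along every such $\xi$ reads as the distributional identity
\[
    \nabla \rho + \rho \nabla \Phi = 0 \quad \text{in } \cl{D}'(\bb{T}^d).
\]
Because $\psi$ is $c$-concave and hence globally Lipschitz on the torus, and both $V$ and $W*\rho$ are Lipschitz, $\Phi \in C^{0,1}(\bb{T}^d)$, so that $e^\Phi$ is Lipschitz and bounded above and below. Testing the identity against $e^\Phi \zeta$ for arbitrary smooth $\zeta$ then yields $\nabla(\rho\, e^\Phi) = 0$ distributionally, so $\rho\, e^\Phi$ is a positive constant, and therefore $\rho = C e^{-\Phi}$ for some $C > 0$. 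This at once forces $\rho > 0$ everywhere, $\rho \in C^{0,1}(\bb{T}^d)$, and the identity $u[\rho] = \log \rho + V + W*\rho = -\psi/\tau + \log C$.

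For part (2), the identity $u[\rho] = -\psi/\tau + C$ together with the Lipschitz regularity of $\psi$, $V$ and $W*\rho$ implies that $\log \rho = u[\rho] - V - W*\rho$ is Lipschitz, so $\rho$ itself is Lipschitz and bounded above and below by positive constants. Since $\eta$ is Lipschitz and positive, Caffarelli's regularity theorem recalled above applies to the pair $(\rho,\eta)$ and gives $\psi \in C^{2,\alpha}(\bb{T}^d)$ for every $\alpha < 1$, the Monge--Amp\`ere equation holding classically. Reinjecting into $u[\rho] = -\psi/\tau + C$ gives $u[\rho] \in C^{2,\alpha}$, and the relation $\log \rho = u[\rho] - V - W*\rho$ promotes $\log \rho$ to $C^{2,\alpha}$ (using $V \in C^{2,1}$ and $W*\rho \in C^{2,1}$, the latter because $D^2(W*\rho) = D^2 W * \rho$ is Lipschitz whenever $W \in C^{2,1}$), proving $\rho \in C^{2,\alpha}$.

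For part (3), one runs this bootstrap once more: both $\rho$ and $\eta$ now lie in $C^{2,\alpha}$, so the second assertion of Caffarelli's theorem gives $\psi \in C^{4,\alpha}(\bb{T}^d)$ and hence $u[\rho] = -\psi/\tau + C \in C^{4,\alpha}$. The identity $\log \rho = u[\rho] - V - W*\rho$ is then capped at $C^{2,1}$ by the regularity of $V$ and of $W*\rho$, yielding $\rho \in C^{2,1}(\bb{T}^d)$. For the dual potential, $\phi$ is a Kantorovich potential from $\eta$ to $\rho$, and since both densities are $C^{2,\alpha}$ and bounded away from $0$ and $\infty$, Caffarelli's theorem applied with source and target exchanged gives $\phi \in C^{4,\alpha}(\bb{T}^d)$. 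The step I expect to be the most delicate is the passage, in part (1), from the distributional identity $\nabla \rho + \rho \nabla \Phi = 0$ to the pointwise formula $\rho = C e^{-\Phi}$: this is exactly where the Lipschitz regularity of $\Phi$, stemming from the $c$-concavity of $\psi$, is indispensable, as without it the multiplication of the distributional identity by $e^\Phi$ could not be justified.
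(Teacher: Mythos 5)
Your proof is correct and follows the same overall strategy as the paper: derive the Euler--Lagrange identity $u[\rho]=-\psi/\tau+C$ (for which the paper simply cites Chapter~8 of Santambrogio's book, while you carry out the internal-variation argument $\rho_\epsilon=(\id+\epsilon\xi)_\#\rho$ in full, getting positivity cleanly from $\nabla(\rho\,e^\Phi)=0$), and then bootstrap with Caffarelli's regularity exactly as in the paper's proofs of (2) and (3). One small point worth flagging: with the paper's literal definition $\cl{W}[\rho]=\iint W(x-y)\,d\rho\,d\rho$ (no factor $\tfrac12$), the first variation of the interaction term along your perturbation is $2W*\rho$ rather than $W*\rho$; your $\Phi=V+W*\rho+\tau^{-1}\psi$ matches the stated $u[\rho]$ only under the usual halved convention $\cl{W}[\rho]=\tfrac12\iint W\,d\rho\,d\rho$ (or after replacing $W$ by $\tfrac12(W+W(-\cdot))$), a normalization the paper appears to assume implicitly throughout, so this is a notational issue in the source rather than a gap in your argument.
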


    \begin{proof}
    \begin{enumerate}
        \item The positivity of $\rho$ and the optimality condition follows from an easy adaptation of the argument, presented in the non-periodic setting and without interaction, in Chapter 8 of \cite{OT_Filippo}. 

        \item By optimality condition, $u[\rho]$ is Lipschitz, hence $\log \rho$ is also Lipschitz as $V,W$ are. Using that $\rho > 0$, we deduce that $\rho$ is also Lipschitz. By Caffarelli's regularity, this implies that $(\psi,\phi)$ are of class $C^{2,\alpha}(\bb{T}^d)$ for all $\alpha < 1$, which in turn implies that $u[\rho]$ and $\rho$ are also of class $C^{2,\alpha}(\bb{T}^d)$ for all $\alpha < 1$. 

        \item We obtained above that $\rho$ is of class $C^{2,\alpha}$, using that $\eta$ is also of this class, we obtain that $(\psi,\phi)$ are of class $C^{4,\alpha}(\bb{T}^d)$, which concludes using that $V,W$ are of class $C^{2,1}(\bb{T}^d)$. 
    \end{enumerate}
    \end{proof}

    \begin{remark}
        Using that $\psi$ is Lipschitz, one can derive the following quantitative estimate using lemma \ref{lemma: Log-Lip_to_L_infty}: there exist two constants $c,C > 0$ depending only on $V,W$ and $\kappa$ depending only on $d$ such that
        \[
            c e^{\frac{-\kappa}{\tau}} \leq \rho \leq C e^{\frac{\kappa}{\tau}}
        \]
    \end{remark}

    \begin{definition}[JKO flow]
        Let $\rho_0 \in \cl{P}(\bb{T}^d)$, a JKO flow starting from $\rho_0$ is any sequence of measure $(\rho_k^\tau)$ with $\rho_0 \in \cl{P}(\bb{T}^d)$ and satisfying $\rho_{k+1}^\tau \in \Prox_\tau[\rho_k^\tau]$. We shall let $\rho^\tau_\cdot$ be the piecewise constant interpolation of the values of $(\rho_k^\tau)_{k \geq 0}$ (i.e. constant equal to $\rho_k^\tau$ on $[k \tau, (k+1) \tau)$). 
    \end{definition}

    The following theorem is the fundamental result in the theory of the JKO scheme, originally proved by Jordan, Kinderlehrer and Otto in \cite{JKO}, stating convergence of the JKO scheme to the continuous equation. The case of the torus can be easily obtained by modifications of the argument of chapter $8$ of \cite{OT_Filippo} 

    \begin{theorem}[Convergence to the continuous equation] \label{thm: JKO_converges}
        Suppose $\cl{F}[\rho_0] < +\infty$, $T > 0$. Then $\rho^\tau$ converges uniformly on $[0,T]$ in $\bb{W}_2$ to a gradient flow solution to the Granular-Media equation starting from $\rho_0$. Furthermore, the limit curve is of class $C^{0,1/2}([0,T];\bb{W}_2)$. 
    \end{theorem}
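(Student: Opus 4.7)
The plan is to follow the classical Jordan--Kinderlehrer--Otto strategy, adapted to the torus with the added interaction term. The starting point is the one-step optimality: comparing $\rho_{k+1}^\tau$ with $\rho_k^\tau$ itself in the minimisation yields the discrete energy-dissipation inequality
\[
    \cl{F}[\rho_{k+1}^\tau] + \frac{1}{2\tau} W_2^2(\rho_{k+1}^\tau,\rho_k^\tau) \leq \cl{F}[\rho_k^\tau].
\]
Since $V$ and $W$ are bounded and $-\int \rho \log \rho \, dx$ is bounded above on $\cl{P}(\bb{T}^d)$, the functional $\cl{F}$ is bounded below, so telescoping gives $\sum_k W_2^2(\rho_{k+1}^\tau,\rho_k^\tau) \leq 2\tau(\cl{F}[\rho_0] - \inf \cl{F})$. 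A Cauchy--Schwarz estimate applied to the piecewise-constant (or its refined piecewise-geodesic) interpolation yields the uniform Hölder-$1/2$ bound $\bb{W}_2(\rho_s^\tau,\rho_t^\tau) \leq C\sqrt{|t-s|+\tau}$. Since $(\cl{P}(\bb{T}^d),\bb{W}_2)$ is compact, a refined Arzelà--Ascoli argument extracts a subsequence $\tau_n \to 0$ along which $\rho^{\tau_n}$ converges uniformly on $[0,T]$ to a curve $\rho_\cdot \in C^{0,1/2}([0,T];\bb{W}_2)$.

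Next I would identify the limit as the gradient-flow solution. The Euler--Lagrange relation already collected in the preliminaries gives $u[\rho_{k+1}^\tau] = -\tau^{-1}\psi_{k+1} + C_k$, with $(\psi_{k+1},\phi_{k+1})$ Kantorovich potentials from $\rho_{k+1}^\tau$ to $\rho_k^\tau$, and optimal map $T_{k+1} = \id - \nabla \psi_{k+1}$. For any $\varphi \in C^2(\bb{T}^d)$, a Taylor expansion along the transport map yields
\[
    \int \varphi \dd{(\rho_{k+1}^\tau-\rho_k^\tau)} = \int \nabla \varphi \cdot (T_{k+1}-\id) \dd{\rho_{k+1}^\tau} + O\!\left(\|D^2\varphi\|_\infty W_2^2(\rho_{k+1}^\tau,\rho_k^\tau)\right),
\]
which after division by $\tau$ and using $(T_{k+1}-\id)/\tau = -\nabla u[\rho_{k+1}^\tau]$ reads
\[
    \tfrac{1}{\tau}\int \varphi \dd{(\rho_{k+1}^\tau-\rho_k^\tau)} = \int [\Delta \varphi - \nabla V\cdot\nabla\varphi - (\nabla W*\rho_{k+1}^\tau)\cdot\nabla\varphi] \dd{\rho_{k+1}^\tau} + r_k^\tau,
\]
after an integration by parts against $\rho_{k+1}^\tau$, where $\sum_k |r_k^\tau| \to 0$ by the summability $\sum_k W_2^2(\rho_{k+1}^\tau,\rho_k^\tau) = O(\tau)$. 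Summing in $k$ and passing to the limit via uniform narrow convergence (with uniform continuity in $t$ of $\nabla W * \rho_t^{\tau}$ thanks to the $C^{2,1}$ regularity of $W$) recovers the weak formulation in theorem \ref{thm: gradient_flow_solution}(4). The Fisher-information bound in (3) follows by the metric slope identity $|\partial \cl{F}|^2(\rho_{k+1}^\tau) = \int |\nabla u[\rho_{k+1}^\tau]|^2 \dd{\rho_{k+1}^\tau}$ combined with $|\partial \cl{F}|(\rho_{k+1}^\tau) \leq \tau^{-1}W_2(\rho_{k+1}^\tau,\rho_k^\tau)$ and the telescoping bound above. Uniqueness of the gradient flow for the $\lambda^*$-geodesically convex functional $\cl{F}$ (Ambrosio--Gigli--Savaré, adapted to $\bb{T}^d$) then upgrades subsequential to full convergence.

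The main obstacle, as usual in the JKO setting, is the rigorous passage to the limit in the nonlinear terms: the entropy and interaction integrals are neither convex nor continuous under narrow convergence alone. For the interaction $\int (\nabla W * \rho)\cdot \nabla\varphi \dd{\rho}$ this is handled by the $C^{2,1}$ regularity of $W$, which makes $\rho \mapsto \nabla W * \rho$ uniformly continuous from $(\cl{P},\bb{W}_2)$ to $C^1$. For the entropic contribution, encoded through $u[\rho_{k+1}^\tau] = -\psi_{k+1}/\tau + C_k$, one needs that the integrations by parts carried out above produce only terms that depend on $\rho_{k+1}^\tau$ through smooth testing against $\varphi$, $V$ and $W*\rho$; then narrow convergence suffices. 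The remaining mildly delicate step is justifying that the additive constants $C_k$ in the Euler--Lagrange relation play no role, since only $\nabla \psi_{k+1}$ enters the identification of the velocity.
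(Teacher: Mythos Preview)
The paper does not actually prove this theorem: it is stated as a preliminary result, with the proof deferred to the original Jordan--Kinderlehrer--Otto paper and to Chapter~8 of Santambrogio's monograph (``The case of the torus can be easily obtained by modifications of the argument of chapter~$8$ of \cite{OT_Filippo}''). Your sketch is exactly the argument those references contain --- discrete energy--dissipation estimate, $C^{0,1/2}$ compactness via Arzel\`a--Ascoli in the compact space $(\cl{P}(\bb{T}^d),\bb{W}_2)$, Taylor expansion along the optimal map combined with the Euler--Lagrange condition to recover the weak formulation, slope estimate for the Fisher information, and uniqueness from $\lambda$-convexity --- so the approach is the intended one.

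Two minor corrections. First, there are two cancelling sign slips in your Taylor step: the correct intermediate identities are
\[
    \int \varphi \, \diff(\rho_{k+1}^\tau - \rho_k^\tau) = -\int \nabla\varphi \cdot (T_{k+1}-\id)\,\diff\rho_{k+1}^\tau + O(\ldots),
    \qquad
    \frac{T_{k+1}-\id}{\tau} = +\,\nabla u[\rho_{k+1}^\tau],
\]
and these combine to give the displayed weak form you wrote, which is correct. Second, when you invoke uniqueness via $\lambda^*$-geodesic convexity of $\cl{F}$, be aware that on $\bb{T}^d$ the relevant convexity is along \emph{generalized} geodesics (as the paper notes in the remark following the one-step JKO proposition); this is what the AGS framework requires for the EVI/uniqueness machinery, and it is not entirely automatic on a non-simply-connected manifold.
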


\section{Asymptotic Li-Yau-Hamilton Estimate for the JKO Scheme}

    The goal of this section is to prove the main result of the paper: namely the asymptotic Li-Yau-Hamilton estimate for the JKO scheme associated to the Granular-Medium equation. 

\begin{theorem}[Asymptotic Li-Yau-Hamilton Estimate] \label{thm: Asymptotic_Li_Yau_Hamilton_Estimate}
    Let $(\rho_k^\tau)_{k \geq 0}$ be any JKO flow starting from $\rho_0$, and suppose that $D^2 u[\rho_0] \succeq -\lambda_0$ with $\lambda_0 \in (0,+\infty]$. Then
    
    \begin{enumerate}
        \item If $\lambda_0 < +\infty$, then for all $\epsilon > 0$, there exists $\tau(\epsilon) > 0$ (depending only on $\lambda_0,V,W$) such that for all $t \geq 0$ one has
        \begin{equation}
            D^2 u[\rho_t^\tau] \succeq \left \{ \begin{array}{ll} -(1+\epsilon) \frac{\lambda_0}{2 \lambda_0 t + 1} & \mbox{if $\Lambda = 0$} \\
            -(1+\epsilon) \frac{\Lambda \lambda_0}{\Lambda e^{-\Lambda t} + 2 \lambda_0 (1- e^{-\Lambda t})} & \mbox{else} \end{array} \right . 
        \end{equation}
        \item If $\lambda_0 = +\infty$, then for all $t_0 > 0$, and $\epsilon > 0$ small enough (such that no divisions by zero occur), there exists $\tau(\epsilon,t_0) > 0$ such that for all $\tau < \tau(\epsilon,t_0)$ and $t \geq t_0$ one has
        \begin{equation}
            D^2 u[\rho_t^\tau] \succeq \left \{ \begin{array}{ll} -(1+\epsilon) \frac{1}{2 t} & \mbox{if $\Lambda = 0$} \\
            -(1+\epsilon) \frac{\Lambda}{2 (1- e^{-\Lambda t})} & \mbox{else} \end{array} \right . 
        \end{equation}
    \end{enumerate}
\end{theorem}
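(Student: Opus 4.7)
The plan mirrors the continuous-time argument of Section~2, where the Li--Yau--Hamilton bound was reduced to the logistic differential inequality $-\dot\lambda_t \geq 2\lambda_t^2 - \Lambda\lambda_t$ via a maximum-principle analysis of $\lambda_t := -\min_{x,\nu} D^2 u[\rho_t][\nu,\nu]$. At the discrete level, I would establish a one-step-improvement lemma
\[
    \lambda_{k+1} \leq \Phi_\tau(\lambda_k), \qquad \lambda_k := -\min_{x,\nu} D^2 u[\rho_k^\tau][\nu,\nu],
\]
where $\Phi_\tau$ is, up to $O(\tau^2)$ corrections, the time-$\tau$ flow of that ODE. Iterating this recursion and comparing it to the continuous flow would produce the estimates of the theorem. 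This exactly matches the paper's division of the work into Appendix~A (one-step improvement, Theorem~\ref{thm: One-Step_Improvement}) and Appendix~B (asymptotic comparison of discrete iterates with the logistic flow).

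\textbf{The smooth one-step improvement.} Fix $\eta := \rho_k^\tau$ and $\rho := \rho_{k+1}^\tau \in \Prox_\tau[\eta]$, with Kantorovich potential $\psi$ from $\rho$ to $\eta$. The JKO optimality condition (Section~3.2) gives $\tau u[\rho] = -\psi$ up to a constant, so bounding $\lambda_{k+1}$ from above is equivalent to bounding $D^2\psi[\nu,\nu]$ from above uniformly in $(x,\nu)$. When $\eta$ is smooth and strictly positive, Caffarelli's theorem ensures that $\psi$ is smooth, so one may pick $(x_*,\nu_*)$ maximising $D^2\psi[\nu,\nu]$: then $\nu_*$ is an eigenvector of $D^2\psi(x_*)$ and $\nabla(\psi_{\nu_*\nu_*})(x_*)=0$, $\Delta(\psi_{\nu_*\nu_*})(x_*)\leq 0$. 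Substituting $\log\rho = -\psi/\tau - V - W*\rho + \mathrm{const}$ into the Monge--Amp\`ere identity $\log\rho = \log\eta\circ(\id-\nabla\psi) + \log\det(I - D^2\psi)$ rewrites everything in terms of $\psi$ and $u[\eta]$. Differentiating this identity twice in direction $\nu_*$ at $x_*$, invoking $D^2 u[\eta]\succeq -\lambda_k$ to bound the $u[\eta]$-contribution, applying Lemma~\ref{lemma: gradient_estimate} to the semi-convex functions $u[\rho]$ and $u[\eta]$ to tame the stray gradient terms, controlling the $V,W$-terms exactly as in the continuous proof (terms (V) and (W)), and linearising $\det(I-D^2\psi) = 1 - \mathrm{tr}(D^2\psi) + O(|D^2\psi|^2)$, one extracts a pointwise discrete logistic-type inequality of the form
\[
    \lambda_{k+1} - \lambda_k \leq -2\tau\lambda_{k+1}^2 + \tau\Lambda\lambda_{k+1} + O(\tau^2),
\]
which is the announced $\Phi_\tau$.

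\textbf{Non-smooth case, iteration, main obstacle.} To remove the smoothness assumption, and in particular to treat item (2) where $\lambda_0 = +\infty$, I would approximate $\eta$ by smooth positive densities $\eta^\varepsilon$ preserving the semi-convexity bound up to $o(1)$, apply the smooth one-step improvement to the resulting JKO step, and pass to the limit using Lemma~\ref{lemma: Stability_Semi_Convexity_Pressure} together with stability of Kantorovich potentials between uniformly positive Lipschitz densities. For (2) one additionally exploits that a single JKO step produces a finite $\lambda_1 \leq 1/\tau$, since $c$-concavity forces $D^2\psi \preceq \mathrm{I}_d$, hence $D^2 u[\rho_1^\tau] = -D^2\psi/\tau \succeq -\mathrm{I}_d/\tau$; from this finite starting point the recursion (1) applies and the asymptotic analysis of Appendix~B yields the factor $1+\epsilon$ uniformly on $t\geq t_0$ for $\tau$ small enough. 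The genuine obstacle is the smooth one-step improvement itself: differentiating $\log\det(I-D^2\psi)$ twice generates third and fourth derivatives of $\psi$, and the composition $u[\eta]\circ(\id-\nabla\psi)$ produces cross terms whose control requires using that $\nabla\psi = O(\sqrt{\tau})$ in an averaged sense; assembling all this into a clean inequality with the \emph{sharp} coefficient $2$ in front of $\lambda_{k+1}^2$ and truly $O(\tau^2)$ errors, while never invoking anything beyond semi-convexity and Lipschitz data, is the delicate computation that the paper relegates to Appendix~A.
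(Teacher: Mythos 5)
The overall architecture you describe---a one-step semi-convexity improvement, iterated, and then compared to the logistic ODE via a discrete Gr\"onwall argument---is indeed the paper's strategy. However, the heart of your proposal, the smooth one-step improvement, contains a sign error that would make the argument fail.

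You propose to maximise $D^2\psi[\nu,\nu]$ over $(x,\nu)\in\bb{T}^d\times\bb{S}^d$, where $\psi$ is the Kantorovich potential from $\rho$ to $\eta$ and $\tau u[\rho]=-\psi$, and then to differentiate the Monge--Amp\`ere identity $\log\rho = \log\eta\circ(\id-\nabla\psi) + \log\det(I-D^2\psi)$ twice in $\nu$ at the optimal $(x_*,\nu_*)$. Writing $B := I - D^2\psi\succ 0$, the second $\nu$-derivative of $\log\det B$ equals
\[
-\Tr B^{-1}D^2\psi_{\nu\nu} - \Tr\bigl(B^{-1}D^2\psi_\nu\bigr)^2.
\]
At the maximum, $D^2\psi_{\nu\nu}\preceq 0$, so the first term is $\geq 0$; but the second term is $\leq 0$, and it involves the third derivatives $\psi_{\nu ij}$ for \emph{all} $i,j$, which the first-order condition $\nabla(\psi_{\nu\nu})(x_*)=0$ (only constraining $\psi_{\nu\nu i}$) does not kill. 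Since you want a \emph{lower} bound on the right-hand side of the identity, in order to bound $-\lambda_{k+1}$ from below, this uncontrolled negative term is fatal.

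The paper works instead with the \emph{other} Kantorovich potential $\phi$, sets $v := \frac{1}{2}|x|^2 - \phi$, and maximises $D^2 v[\nu,\nu]$. Differentiating the Monge--Amp\`ere identity for the reverse map, $\log\det D^2 v = \log\eta - \log\rho(\nabla v)$, twice in $\nu$ at the maximum of $D^2 v$ produces on the left
\[
\Tr[D^2 v]^{-1}D^2 v_{\nu\nu} - \Tr\bigl([D^2 v]^{-1}D^2 v_\nu\bigr)^2,
\]
and \emph{both} terms are now $\leq 0$ at a maximum of $D^2 v[\nu,\nu]$: the first because $D^2 v_{\nu\nu}\preceq 0$ and $[D^2 v]^{-1}\succ 0$, the second because it is minus $\Tr$ of a square. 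The entire left-hand side is then discarded. Your optimum and the paper's correspond under the change of variables $\nabla v$, via $[D^2 v]^{-1}-I = \tau D^2 u[\rho](\nabla v)$, but the problematic third-order term does not transform favourably, so the two formulations are genuinely inequivalent.

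Two further remarks. Linearising $\det(I-D^2\psi) = 1 - \mathrm{tr}(D^2\psi) + O(|D^2\psi|^2)$ discards the quadratic structure that produces the sharp coefficient $2$ in front of $\lambda_{k+1}^2$: the paper retains the exact Monge--Amp\`ere identity and extracts the precise factor $M^2 = (1-\tau\lambda_{k+1})^{-2}$, which encodes the whole power series. Finally, your reduction of the non-smooth case by approximating $\eta$ with smooth densities while preserving $D^2 u[\eta^\varepsilon]\succeq -\lambda_0 - o(1)$ is precisely the step the paper says it does \emph{not} know how to carry out when $W\neq 0$ (the map $\rho\mapsto u[\rho]$ is then nonlinear), which is why Appendix~A replaces second derivatives by finite differences at the maximum rather than regularising the data.
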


Before going into the proof, let us briefly explain the different steps. 
\begin{enumerate}
    \item Following P.W. Lee's strategy. We first show a one-step improvement of semi-convexity along the JKO scheme. That is, if one starts from a measure $\eta$ satisfying $D^2 u[\eta] \succeq -\lambda_0$, then if $\rho$ is obtained from $\eta$ after one-step of the JKO scheme, one has $D^2 u[\rho] \succeq -\lambda_1$ with $\lambda_1$ is controlled by $\lambda_0$ through an inequality of the form $G[\tau \lambda_1,\tau] \leq \tau \lambda_0$ for an explicit function $G$ depending only on $\tau,V,W$. 
    
    \item The second step is to iteratively use the previous one-step estimate, relying on a discrete comparison principle, to obtain a lower bound with respect to a sequence $E_k^\tau$ depending only on the initial data on the initial data and on $\tau$, i.e. to show that $D^2 u[\rho_k^\tau] \succeq -\frac{1}{\tau} E_k^\tau$ for any JKO flow, and satisfying $G[E_{k+1}^\tau,\tau] = E_k^\tau$ (that is replacing the inequality in the previous step by an equality). 
    
    \item The final step is to study the asymptotic behavior of the sequence $E_k^\tau$ in the regime $\tau \to 0$ and $k \tau \sim t$. The argument is based on a linearization of $G$ and a comparison with the solution to the ODE solving the linearized problem.  
\end{enumerate}

Before going to the proof, we shall also note that the estimate will take a more precise quantitative form in the case of the heat equation, i.e. when $V$ and $W$ are zeros, see proposition \ref{prop: Asymptotic_Estimate_Heat} for the precise statement in this case. 

\subsection{One-Step Improvement of Semi-Convexity}

    We start by proving the one-step improvement of semi-convexity along the JKO scheme. To state the estimate, we define for $\tau > 0$ and any $E \in [0,1)$ the function
    \begin{equation}
        G[E,\tau] := \frac{E}{(1-E)^2}(1-\tau (2\lambda^* + L^*) + \tau (\lambda^* + L^*) E)
    \end{equation}
    we also extend it to $E = 1$ by the value $+\infty$ (which is consistent with the limit as $E \to 1$ for $\tau$ small enough). 
    
    The one-step improvement then takes the following form
    
    \begin{theorem}[One-Step improvement of Semi-Convexity] \label{thm: One-Step_Improvement}
        Suppose $D^2 u[\eta] \succeq -\lambda_0$ with $\lambda_0 \in [0,+\infty]$. Let $-\lambda_1$ be the minimal possible eigenvalue of $D^2 u[\rho]$. Then:
        \begin{itemize}
            \item If $\lambda_0 < +\infty$ one has $\tau \lambda_1 < 1$ and
            \begin{equation}
                G[\tau \lambda_1,\tau] \leq \tau \lambda_0
            \end{equation}
            \item Else, if $\lambda_0 = +\infty$, the same holds but with $\tau \lambda_1 \leq 1$. 
        \end{itemize}
    \end{theorem}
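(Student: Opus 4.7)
I would treat the smooth case first, deferring the regularization argument (reducing the non-smooth case to the smooth one by a $\bb{W}_2$-approximation of $\eta$ preserving $D^2 u[\eta] \succeq -\lambda_0$) to Appendix~A. In the smooth regime, the Euler--Lagrange equation of the proximal step gives $u[\rho] = -\psi/\tau + C$, where $\psi$ is the Kantorovich potential from $\rho$ to $\eta$ and $T = \id - \nabla\psi$ the optimal transport map, while the Monge--Amp\`ere identity reads $\rho(x) = \eta(T(x))\det(I - D^2\psi(x))$. Combining them yields the master relation
\[
-\frac{\psi(x)}{\tau} = u[\eta](T(x)) + [V(x) - V(T(x))] + [W*\rho(x) - (W*\eta)(T(x))] + \log\det(I - D^2\psi(x)),
\]
which I will differentiate twice at a carefully chosen base point and direction to directly read off $G[\tau\lambda_1,\tau] \leq \tau\lambda_0$.

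\textbf{Optimality setup.} By compactness, pick $(x_*,\nu_*) \in \bb{T}^d \times \bb{S}^{d-1}$ achieving $-\lambda_1 := \min_{x,\nu} D^2 u[\rho](x)[\nu,\nu]$. Then $\nu_*$ is an eigenvector of $D^2\psi(x_*)$ with eigenvalue $E := \tau\lambda_1$, and $1$-concavity of $\psi$ gives $E \leq 1$; when $\lambda_0 < +\infty$, the lower bound on $u[\eta]$ combined with Caffarelli's regularity upgrades this to strict inequality $E < 1$, which is precisely the case dichotomy in the statement. First-order spatial optimality of the scalar $D^2 u[\rho][\nu_*,\nu_*]$ at $x_*$ gives $D^3\psi(x_*)[\nu_*,\nu_*,\cdot] = 0$, hence $\partial_{\nu_*\nu_*}T(x_*) = 0$ and $(\partial_{\nu_*}D^2\psi(x_*))\nu_* = 0$; second-order spatial optimality gives $\partial_{\nu_*\nu_*}D^2\psi(x_*) \preceq 0$ as a symmetric matrix. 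The crucial additional input is that $D^2 u[\rho] \succeq -\lambda_1$ \emph{globally} on $\bb{T}^d$, so Lemma~\ref{lemma: gradient_estimate} yields $|\nabla u[\rho]|_\infty \leq \lambda_1/2$ and therefore $|\nabla\psi|_\infty \leq E/2$ and $|x - T(x)|_\infty \leq E/2$ uniformly in $x$.

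\textbf{Execution and main obstacle.} Differentiating the master identity twice in $\nu_*$ at $x_*$, the chain rule applied to each composed term $F\circ T$ gives a clean $(1-E)^2 D^2 F(T(x_*))[\nu_*,\nu_*]$ contribution, since $\partial_{\nu_*\nu_*}T(x_*) = 0$ kills the first-derivative correction; this produces the main inequality $(1-E)^2 D^2 u[\eta](T(x_*))[\nu_*,\nu_*] \geq -(1-E)^2\lambda_0$. The discrepancy terms $V - V\circ T$ and $W*\rho - (W*\eta)\circ T$---the convolution first rewritten as $\int [W(x-y) - W(T(x)-T(y))]\,\mathrm{d}\rho(y)$ via the pushforward---produce errors controlled by $L^*E$ and $E(2-E)\lambda^*$, using the uniform bound $|\nabla\psi|_\infty \leq E/2$, the $L^1$-Lipschitz bounds $|\nabla\partial_{\nu\nu}V|_1 \leq L_V$ and $|\nabla\partial_{\nu\nu}W|_1 \leq L_W$, and the semi-convexity of $V$ and $W$. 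The log-determinant term is handled via Jacobi's formula
\[
\partial_{\nu_*\nu_*}\log\det(M)(x_*) = -\operatorname{tr}\bigl(M^{-1}\partial_{\nu_*\nu_*}D^2\psi(x_*)\bigr) - \operatorname{tr}\bigl((M^{-1}\partial_{\nu_*}D^2\psi(x_*))^2\bigr),
\]
with $M = I - D^2\psi(x_*)$: the first trace is nonnegative by second-order spatial optimality combined with $M^{-1}\succ 0$, and the second trace, thanks to $(\partial_{\nu_*}D^2\psi(x_*))\nu_* = 0$, is supported on the orthogonal hyperplane $\nu_*^\perp$. The main obstacle---and the step I expect to cost the most effort---is to control this last negative trace sharply enough to recover the precise coefficients appearing in $G$; this requires exploiting the full second-order optimality on the product $\bb{T}^d \times \bb{S}^{d-1}$, in particular the mixed spatial--directional Hessian condition for perturbations $(\delta x, \delta\nu)$ with $\delta\nu \perp \nu_*$. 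Collecting all contributions then gives $G[E,\tau] \leq \tau\lambda_0$.
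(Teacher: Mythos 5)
Your general plan---Monge--Amp\`ere, optimality at a minimizer of $D^2u[\rho][\nu,\nu]$, relating $M$ to $\lambda_1$, and controlling the $V,W$ discrepancy terms via Lemma~\ref{lemma: gradient_estimate} and the $L^1$--$L^\infty$ duality---matches the paper's structure. But the way you organize the second differentiation has a sign problem that the paper carefully avoids, and this is a genuine gap, not just ``the step that costs the most effort.''

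\textbf{The sign problem.} You differentiate $\log\det(I-D^2\psi)$ at the maximizer of $D^2\psi[\nu,\nu]$, where $\psi$ is the Kantorovich potential on the $\rho$ side and $T=\id-\nabla\psi$ pushes $\rho$ forward to $\eta$. The Jacobi expansion yields
\[
\partial_{\nu_*\nu_*}\log\det(I-D^2\psi)(x_*) = -\Tr\!\bigl(M^{-1}\partial_{\nu_*\nu_*}D^2\psi\bigr) - \Tr\!\bigl((M^{-1}\partial_{\nu_*}D^2\psi)^2\bigr),
\]
with $M=I-D^2\psi(x_*)\succ 0$. The first term is indeed $\geq 0$ by second-order optimality. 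But the second is $-\Tr((M^{-1}B)^2)$ with $B=\partial_{\nu_*}D^2\psi$ symmetric; writing $M^{-1/2}BM^{-1/2}$ shows this trace equals a Frobenius norm squared, hence $\Tr((M^{-1}B)^2)\geq 0$ unconditionally, and the contribution to your right-hand side is $\leq 0$. Since you need a \emph{lower} bound on the right-hand side to obtain an \emph{upper} bound on $\lambda_1$, this term works against you, and the optimality conditions you list ($B\nu_*=0$, spatial second-order, mixed spatial--directional) do not force it to vanish or to be absorbable. You acknowledge this but treat it as a tractable obstacle; I do not see how the coefficients of $G$ can be recovered by this route.

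\textbf{What the paper does instead.} The paper works on the $\eta$ side: it takes the other Kantorovich potential $\phi$, sets $v=\tfrac12|x|^2-\phi$ (so $\nabla v$ pushes $\eta$ to $\rho$), writes the Monge--Amp\`ere as $\log\det D^2v = \log\eta - \log\rho\circ\nabla v$, and differentiates at a \emph{maximum} of $D^2v[\nu,\nu]$. There, both pieces of the second derivative of $\log\det D^2v$,
\[
\Tr\!\bigl([D^2v]^{-1}D^2v_{\nu\nu}\bigr) - \Tr\!\bigl(([D^2v]^{-1}D^2v_\nu)^2\bigr),
\]
are non-positive (the first because $D^2v_{\nu\nu}\preceq 0$ at a maximum, the second because it is minus a Frobenius-type quantity), so the whole log-determinant contribution can simply be discarded. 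The conclusion $q_{\nu\nu}\leq M^2 p_{\nu\nu}(\nabla v)$ then falls out cleanly, and $M=\frac{1}{1-\tau\lambda_1}$ is read off via $[D^2v]^{-1}-I=\tau D^2u[\rho](\nabla v)$. In short: the two formulations are not symmetric, and only the $\eta$-side one puts the squared-trace term on the favorable side of the inequality. You should redo the computation from the $\phi$/$v$ side.

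\textbf{Secondary issue: the regularization step.} You propose to reduce the non-smooth case to the smooth one by a $\bb{W}_2$-approximation of $\eta$ preserving $D^2u[\eta]\succeq-\lambda_0$. The paper explicitly flags that this does not work when $W\neq 0$ (one does not know how to regularize $\eta$ while keeping the lower bound on $D^2u[\eta]$, because the relation between $\eta$ and $u[\eta]$ is non-local). Appendix~A of the paper instead reproves the optimality inequality at the maximum point directly, using finite-difference quotients of $\phi$ (which is already $C^{2,\alpha}$ once $\eta$ is Lipschitz and bounded away from zero) rather than regularizing $\eta$. Your deferred ``regularization argument'' would need to be replaced by such a finite-difference maximum-principle argument.
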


    As in the continuous case, we shall provide an incomplete, but more illuminating, proof assuming more regularity on the initial data (more precisely, we shall assume that $\eta$ is strictly positive and of class $C^{2,\alpha}(\bb{T}^d)$). If we have $W = 0$, one can easily deduce the general case by approximation. Similarly, if we consider the case $\lambda_0 = +\infty$ and one is only interested in the asymptotic estimate, one can first do two iterations of the scheme, in order to obtain the $C^{2,\alpha}$ regularity and the positivity, and then iterate starting from $k = 2$, or alternatively proceed by approximation. But in the case $W \neq 0$ and $\lambda_0 < +\infty$, we encounter the same issue as in the continuous case: we do not know how to approximate $\eta$ by regular densities while preserving, at least asymptotically, the bound $D^2 u[\eta] \geq -\lambda_0$. 

    The proof in the general case is thus postponed to the appendix, and makes use of a classical strategy in similar problem: replacing second order quantities by finite-differences. 

    \begin{proof}[Proof in the regular case]
        We assume that $\eta \in C^{2,\alpha}(\bb{T}^d)$, $\eta > 0$. We recall that, under these hypotheses, that $\rho$ is of class $C^{2,\alpha}(\bb{T}^d)$ and the Kantorovich potentials $(\psi,\phi)$ are of class $C^{4,\alpha}(\bb{T}^d)$. Furthermore, $\psi,\phi$ are strictly $1$-concave, which gives $\tau D^2 u[\rho] \succ -1$, hence $\tau \lambda_1 < 1$. We let $p := \log \rho$ and $q := \log \eta$. Furthermore we set $v := \frac{1}{2} |x|^2 - \phi$. Taking the logarithm of the Monge-Ampère equation gives
        \[
            \log \det D^2 v = \log \eta - \log \rho(\nabla v) = q - p(\nabla v)
        \]
        we divide the proof into several steps.
        \begin{enumerate}
            \item \emph{Second-Order Derivative of Monge-Ampère equation: } Consider a direction $\nu \in \bb{S}^d$, Since $D^2 v \succ 0$, we can derive twice the Monge-Ampère equation in direction $\nu$, which gives
            \[
                \Tr [D^2 v]^{-1} D^2 v_{\nu \nu} - \Tr |[D^2 v]^{-1} D^2 v_\nu|^2 
            \]
            \[
                = q_{\nu \nu} - D^2 p(\nabla v)[\nabla v_\nu,\nabla v_\nu] - \nabla p(\nabla v) \cdot \nabla v_{\nu \nu}
            \]
            
            \item \emph{Maximum Principle: } Consider a point $x \in \bb{T}^d$, and a direction $\nu$ such that $D^2 v(x)[\nu,\nu]$ is maximal, and denote by $M$ the value of this maximum. Alternatively $D^2 \phi(x)[\nu,\nu]$ is minimal as $D^2 v = I - D^2 \phi$, in particular, as $\phi$ is periodic, we must have $M \geq 1$. From now on, all computations shall be carried at this particular point $x$. We have the following optimality conditions
            \begin{itemize}
                \item As first order condition on $\nu$ we get that $\nabla v_\nu = D^2 v \cdot \nu = M \nu$, that is $\nu$ is an eigenvector of $D^2 v$. 
                \item  For optimality of $x$, we get the first and second order condition $\nabla v_{\nu \nu} = 0$ and $D^2 v_{\nu \nu} \preceq 0$. 
                \item We use the semi-convexity to gradient estimate of lemma \ref{lemma: gradient_estimate}. As $\phi$ satisfies $D^2 \phi \succeq 1-M$, we must have $|\nabla \phi|_\infty \leq \frac{1}{2}(M-1)$, hence $|x-\nabla v|_\infty \leq \frac{1}{2}(M-1)$. 
            \end{itemize}
            
            If we plug this into the second derivative of Monge-Ampère equation, we see that the left side is then non-positive, as $D^2 v$ is a positive definite matrix. Furthermore, the last term vanish and the second term is equal to $M^2 p_{\nu \nu}(\nabla v)$. 
            
            Therefore we obtain 
            \[
                q_{\nu \nu} \leq M^2 p_{\nu \nu}(\nabla v)
            \]
            we also observe that 
            \[
                q = u[\eta] - V - W * \eta \qquad p = u[\rho] - V - W * \rho
            \]
            so that we obtain
            \begin{equation} \label{eq: Optimality_Equation}
                -\lambda_0 \leq u[\eta]_{\nu \nu} \leq M^2 u[\rho]_{\nu \nu}(\nabla v) + \underset{(V)}{\underbrace{V_{\nu \nu} - M^2 V_{\nu \nu}(\nabla v)}} + \underset{(W)}{\underbrace{W_{\nu \nu} * \eta - M^2 W_{\nu \nu} * \rho(\nabla v)}}
            \end{equation}

            \item \emph{Estimating the (V) Term: } We write down 
            \[ 
                (V) = V_{\nu \nu} - V_{\nu \nu}(\nabla v) + (1-M^2) V_{\nu \nu}(\nabla v) 
            \]
            By the Lipschitz estimate on $V$ we have
            \[
                V_{\nu \nu} - V_{\nu \nu}(\nabla v) \leq L_V |x - \nabla v|_\infty \leq \frac{1}{2} L_V (M-1)
            \]
            Furthermore since $1-M^2 \leq 0$ by $M \geq 1$ the last term is bounded above by $\lambda_V (M^2 - 1)$. Hence 
            \[ 
                (V) \leq \lambda_V (M^2 - 1) + \frac{1}{2} L_V (M-1)
            \]
            
            \item \emph{Estimating the (W) Term: } Similarly we have
            \[
                (W) = W_{\nu \nu} * \eta - W_{\nu \nu} * \rho(\nabla v) + (1-M^2) W_{\nu \nu} * \rho(\nabla v)
            \]
            we can again bound the last term by $(M^2-1) \lambda_W$. For the first term will again use the Lipschitz property, but one has to be careful due to the convolution. Using that $\nabla v$ pushes forward $\eta$ to $\rho$ we have
            \[
                W_{\nu \nu} * \rho(\nabla v) = \int W_{\nu \nu}(\nabla v(x) - y) \rho(\diff y) = \int W_{\nu \nu}(\nabla v(x) - \nabla v(y)) \eta(\diff y)
            \]
            Hence
            \begin{align*}
                W_{\nu \nu} * \eta - W_{\nu \nu} * \rho(\nabla v) &= \int [W_{\nu \nu}(x-y) - W_{\nu \nu}(\nabla v(x) - \nabla v(y))] \eta(\diff y) \\
                &\leq L_W \int |\nabla v(x) - x - (\nabla v(y) - y)|_\infty \eta(\diff y) \\
                &\leq L_W (M-1)
            \end{align*}
            Combining these estimates we obtain
            \[
                (W) \leq \lambda_V (M^2 - 1) + L_W (M-1)
            \]

            \item \emph{Relating $M$ to $\lambda_1$: } We now relate $M$ to $\lambda_1$. Since if $u = \frac{1}{2} |x|^2 - \psi$ we have $\nabla u(\nabla v) = \rm{id}$ mod $\bb{Z}^d$ (as $\nabla u$ is the transport map from $\rho$ to $\eta$, and $\nabla v$ the reverse one), by continuity there must be some universal $n \in \bb{Z}^d$ such that $\nabla u(\nabla v) = \rm{id} + n$.  Hence we obtain $\nabla v + \tau \nabla u[\rho](\nabla v) = \rm{id} + n$. Differentiating gives $[D^2 v]^{-1}-I= \tau D^2 u[\rho](\nabla v)$. But then since $\nabla v$ is a diffeomorphism, maximizing the eigenvalues of $D^2 v$ at the point $x$ in direction $\nu$ is the same as minimizing the eigenvalues of $D^2 u[\rho]$ at the point $\nabla v(x)$ in direction $\nu$. Hence we obtain
            \[
                -\tau \lambda_1 = \tau u[\rho]_{\nu \nu}(\nabla v) = \frac{1-M}{M}
            \]
            Hence $M = \frac{1}{1 - \tau \lambda_1}$.

            \item \emph{Conclusion: } Combining the three estimates, we have
            \[
                -\lambda_0 \leq M^2 \frac{1-M}{\tau M} + \lambda^* (M^2 - 1) + L^* (M-1) 
            \]
            if we multiply by $\tau$, and replace $M$ by $\frac{1}{1 - \tau \lambda_1}$, algebraic manipulations then give the identity
            \[
                G[\tau \lambda_1,\tau] \leq \tau \lambda_0
            \]
        \end{enumerate}
    \end{proof}

\subsection{Discrete Comparison Principle}

    We know perform the second step of the proof, that is, we derive a universal lower-bound, depending only on the semi-convexity of the initial data and of the time step $\tau$, for the semi-convexity along the JKO flow. This is based on the following definition.

    \begin{definition}[Comparison sequence] \label{def: Comparison_Sequence}
        Let $\lambda_0 \in [0,+\infty]$ and suppose that $\tau < \tau^*$ with 
        \begin{equation} \label{eq: Tau_Star}
            \tau^* := \min \left ( \frac{1}{\lambda^* + L^*}, \frac{2}{3 \lambda^* + L^*} \right ) \in (0,+\infty]
        \end{equation}
        Then there exists a unique sequence satisfying
        \begin{equation} \label{eq: Comparison_Serquence}
            \left \{ \begin{array}{ll}
                E_0 = \tau \lambda_0 & \\
                E_k^\tau \in [0,1] & \forall k \geq 1 \\
                G[E_{k+1}^\tau,\tau] = E_k^\tau & \forall k \geq 0
            \end{array} \right .
        \end{equation}
        we call this sequence the comparison sequence starting from $\lambda_0$ (at time step $\tau$). We shall also write $t \ \to E_t^\tau$ for the piecewise constant interpolation (with time step $\tau$) of the values of $(\frac{1}{\tau} E_k^\tau)_{k \geq 0}$. 
    \end{definition}

    The discrete comparison sequence then takes the following form.

    \begin{lemma}[Discrete Comparison Principle] \label{lemma: Discrete_Comparison_Principle}
        Suppose that $\tau < \tau^*$, and that $D^2 u[\rho_0] \succeq -\lambda_0$. Then if $E^\tau$ is the comparison sequence starting from $\lambda_0$, then for any JKO flow $(\rho_k^\tau)_{k \geq 0}$ starting from $\rho_0$ we have
        \begin{equation} \label{eq: Ineq_Discrete_Comparison_Principle}
            D^2 u[\rho_k^\tau] \succeq -\frac{1}{\tau} E_k^\tau \qquad \forall k \geq 0
        \end{equation}
    \end{lemma}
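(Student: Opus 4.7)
The plan is to proceed by induction on $k$, with the one-step improvement of Theorem \ref{thm: One-Step_Improvement} providing the engine and a monotonicity property of $G[\cdot,\tau]$ closing each step. The base case $k=0$ is immediate: by definition $E_0^\tau = \tau \lambda_0$, so the hypothesis $D^2 u[\rho_0] \succeq -\lambda_0$ is exactly $D^2 u[\rho_0] \succeq -\frac{1}{\tau} E_0^\tau$.

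For the inductive step, assume $D^2 u[\rho_k^\tau] \succeq -\mu_k$ with $\mu_k := \frac{1}{\tau} E_k^\tau$, and let $\mu_{k+1}$ be the minimal constant such that $D^2 u[\rho_{k+1}^\tau] \succeq -\mu_{k+1}$. Since $\rho_{k+1}^\tau \in \Prox_\tau[\rho_k^\tau]$, Theorem \ref{thm: One-Step_Improvement} applied with $\eta := \rho_k^\tau$, $\rho := \rho_{k+1}^\tau$ and initial semi-convexity bound $\mu_k$ yields $\tau \mu_{k+1} \in [0,1]$ together with
\[
G[\tau \mu_{k+1}, \tau] \leq \tau \mu_k = E_k^\tau = G[E_{k+1}^\tau, \tau].
\]
If one can invert $G[\cdot,\tau]$ in a monotone way on the interval containing both $\tau \mu_{k+1}$ and $E_{k+1}^\tau$, this immediately gives $\tau \mu_{k+1} \leq E_{k+1}^\tau$, hence $D^2 u[\rho_{k+1}^\tau] \succeq -\frac{1}{\tau} E_{k+1}^\tau$, closing the induction.

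The main obstacle is thus the monotonicity step. A direct differentiation gives
\[
\partial_E G[E, \tau] = \frac{(1 - \tau(2\lambda^* + L^*)) + (1 + \tau L^*)\, E}{(1-E)^3},
\]
whose sign is governed by the linear-in-$E$ numerator with positive slope $1 + \tau L^*$. The two bounds entering $\tau^*$ are calibrated precisely to control this sign: the bound $\tau < 1/(\lambda^* + L^*)$ secures positivity at $E = 1$, while $\tau < 2/(3\lambda^* + L^*)$ is used to guarantee strict monotonicity on the portion of $[0,1]$ where the comparison sequence lives. Depending on the size of $\tau$, I would either verify directly that the numerator is non-negative on all of $[0,1)$ (which is automatic when $\tau \leq 1/(2\lambda^* + L^*)$), or run an auxiliary induction simultaneously with the main one, showing that the iterates $\tau \mu_k$ and $E_k^\tau$ remain in the increasing branch of $G[\cdot,\tau]$ for all $k$; the well-posedness of the comparison sequence from Definition \ref{def: Comparison_Sequence} under $\tau < \tau^*$ is a manifestation of exactly this property. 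Once monotonicity on the relevant range is established, inversion is straightforward and the lemma follows.
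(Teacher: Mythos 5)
Your argument follows the paper's proof exactly: induction on $k$ anchored at $E_0^\tau = \tau\lambda_0$, the one-step improvement Theorem \ref{thm: One-Step_Improvement} providing $G[\tau\mu_{k+1},\tau] \leq \tau\mu_k = E_k^\tau = G[E_{k+1}^\tau,\tau]$, and strict monotonicity of $G[\cdot,\tau]$ on $[0,1)$ to invert and conclude $\tau\mu_{k+1} \leq E_{k+1}^\tau$. That is precisely the structure of the paper's proof of Lemma \ref{lemma: Discrete_Comparison_Principle}.

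One point worth flagging: your derivative computation
\[
\partial_E G[E,\tau] = \frac{\bigl(1-\tau(2\lambda^*+L^*)\bigr) + (1+\tau L^*)\,E}{(1-E)^3}
\]
is correct, whereas the paper's displayed formula $\partial_E G[E,\tau] = \frac{(1-2\tau\lambda^*)E + 1 - \tau(\lambda^*+L^*)}{(1-E)^3}$ contains an algebra slip (one can check both against $G$ numerically, or simply against the leading-order behaviour $G[E,\tau] = (1-\tau(2\lambda^*+L^*))E + O(E^2)$, which pins down the value at $E=0$). Since the coefficient of $E$ in your (correct) numerator is $1+\tau L^*>0$, the numerator is increasing in $E$ and its infimum over $[0,1)$ is attained at $E=0$, equal to $1-\tau(2\lambda^*+L^*)$; so the clean sufficient condition for $\partial_E G > 0$ on all of $[0,1)$ is $\tau < 1/(2\lambda^*+L^*)$, which you identify. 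Note that $1/(2\lambda^*+L^*) \leq \tau^*$ (as defined in \eqref{eq: Tau_Star}), so the paper's stated threshold $\tau<\tau^*$ is actually \emph{weaker} than what the corrected derivative formula demands; the paper's $\tau^*$ is calibrated to the erroneous numerator bounds $2-\tau(3\lambda^*+L^*)$ and $1-\tau(\lambda^*+L^*)$. Your hedging suggestion (restrict to $\tau \leq 1/(2\lambda^*+L^*)$, or run an auxiliary induction keeping the iterates in the increasing branch) is the right repair; the simplest fix consistent with the rest of the paper is to replace $\tau^*$ by $1/(2\lambda^*+L^*)$, which only strengthens the hypothesis and leaves all asymptotic ($\tau\to 0$) statements untouched. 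Your remaining hesitation about attributing specific roles to the two pieces of $\tau^*$ is therefore moot: neither piece individually gives the needed bound, but that is a wart in the paper, not in your argument.
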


    \begin{proof}[Proof of existence and of the discrete comparison principle]
        We can compute that
        \[
            \partial_E G[E,\tau] = \frac{(1 - 2 \tau \lambda^*) E + 1 - \tau(\lambda^* + L^*)}{(1-E)^3}
        \]
        the numerator takes values, when $E \in [0,1)$, between $2 - \tau (3 \lambda^* + L^*)$ and $1- \tau (\lambda^* + L^*)$. Therefore for $\tau < \tau^*$, both of them are strictly positive and one deduce that $\partial_E G[E,\tau] > 0$ on $[0,1)$, hence $G$ defines an increasing diffeomorphism from $[0,1)$ to $[0,+\infty)$, which shows that the comparison sequence is uniquely well-defined. 

        Iterating the one-step estimate, one the obtain a sequence $\lambda_k^\tau$ such that $D^2 u[\rho_k^\tau] \succeq -\lambda_k^\tau$, $\tau \lambda_k^\tau \in [0,1]$ for all $k \geq 1$, and $G[\tau \lambda_{k+1}^\tau,\tau] \leq \tau \lambda_k^\tau$. Then one easily obtain the inequality $\tau \lambda_k^\tau \leq E_k^\tau$ by induction, indeed, applying the inequality at step $k$ gives $G[\tau \lambda_{k+1}^\tau,\tau] \leq \tau \lambda_k^\tau \leq E_k^\tau = G[E_{k+1}^\tau,\tau]$, and hence $\tau \lambda_{k+1}^\tau \leq E_{k+1}^\tau$ as $G[\cdot,\tau]$ is increasing. 
    \end{proof}

\subsection{Asymptotic Estimate for Comparison Sequence}

    The last part of the proof is to show that the comparison sequence satisfies appropriate asymptotic estimate as $\tau \to 0$, $k \tau \sim t$ with $t$ in an adequate set. We shall first study the Heat equation case, i.e. $V=W=0$, since more precise estimates with simpler techniques are available in this case, and then move to the more complicated case of the Fokker-Planck and Granular-Medium equation (i.e. at least one of the potential is non zero). 

    Since the proofs are mostly technical, and do not involve any particularly appealing new ideas, we postpone them to the appendix, and only state here the results. 

    \begin{proposition}[Heat equation Case] \label{prop: Asymptotic_Estimate_Heat}
        Suppose $V=W = 0$. Then
        \begin{enumerate}
            \item If $\lambda_0 = +\infty$, $E_k^\tau$ does not depends on $\tau$, and one has $E_k \sim \frac{1}{2k}$ as $k \to +\infty$. 
            \item If $\lambda_0 < +\infty$, then for all $\tau$ with $\tau \lambda_0 \leq 1$ one has
            \begin{equation}
                \frac{1}{\tau} E_k^\tau \leq \frac{\lambda_0}{k \tau \lambda_0 (2 - \tau \lambda_0) + K} 
            \end{equation}
        \end{enumerate}
    \end{proposition}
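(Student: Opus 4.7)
The plan is to exploit the dramatic simplification of $G$ in the heat case and then reduce the resulting recurrence to an almost-linear one via a single algebraic substitution. First I would observe that $V = W = 0$ gives $\lambda^* = L^* = 0$, so the definition of $G$ collapses to the $\tau$-independent map
\[
    G[E,\tau] \;=\; \frac{E}{(1-E)^2} \;=:\; G[E].
\]
For part (1), the boundary value $E_0 = +\infty$ forces $E_1^\tau = 1$ (the only preimage of $+\infty$ in $[0,1]$), and from $k \geq 1$ onwards the recursion $G[E_{k+1}] = E_k$ no longer involves $\tau$, giving the claimed $\tau$-independence essentially for free.

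The central computation I would then perform, shared by both parts, is the substitution $y_k := 1/E_k$. Expanding $1/E_k = (1-E_{k+1})^2/E_{k+1} = 1/E_{k+1} - 2 + E_{k+1}$ and rearranging yields
\[
    y_{k+1} - y_k \;=\; 2 - \frac{1}{y_{k+1}},
\]
which presents the sequence as an almost-arithmetic progression perturbed by a term that vanishes as $y_k \to +\infty$. From this form both assertions are short.

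For part (1), I would first argue that $(E_k)_{k \geq 1}$ is strictly decreasing (since $G[E] > E$ on $(0,1)$) and bounded below by $0$, so it converges to a fixed point of $G$, which must be $0$. Hence $y_k \to +\infty$, the error $1/y_{k+1}$ vanishes, and $y_{k+1} - y_k \to 2$; a routine Cesàro-type argument then gives $y_k \sim 2k$, i.e.\ $E_k \sim 1/(2k)$.

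For part (2), starting from $y_0 = 1/(\tau \lambda_0) \geq 1$, I would check by induction that $(y_k)$ is non-decreasing: the jump $y_{k+1} - y_k = 2 - 1/y_{k+1}$ is non-negative whenever $y_{k+1} \geq 1/2$, a condition preserved from $y_0 \geq 1$. Monotonicity then yields the uniform lower bound $y_{k+1} \geq y_0$, hence $y_{k+1} - y_k \geq 2 - \tau \lambda_0$, and telescoping gives $y_k \geq y_0 + k(2 - \tau \lambda_0) = \bigl(1 + k\tau \lambda_0 (2 - \tau\lambda_0)\bigr)/(\tau\lambda_0)$, which inverts to the claimed bound. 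The only real care required is the base case and verifying that $y_k$ stays in the regime where the substitution $y_k = 1/E_k$ is well-defined; beyond that, the argument is a clean discrete analogue of the Grönwall estimate in Lemma~\ref{lemma: Logistic_Gronwall_Lemma}, and I do not expect any substantive obstacle.
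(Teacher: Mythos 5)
Your proposal is correct, and the two parts warrant slightly different comments. For part (1), your substitution $y_k = 1/E_k$ and the resulting telescoping/Ces\`aro argument is essentially identical to what the paper does (the paper writes the recursion as $1/E_{k+1} - 1/E_k = 2 - E_{k+1}$, which is the same identity as yours since $E_{k+1} = 1/y_{k+1}$). For part (2), however, your route is genuinely different from the paper's. The paper introduces the auxiliary functions $f(z) = z - \sqrt{z(z-1)}$ and $g(z) = f(z)/z$, considers a maximizer of $(k+K)E_k^\tau$ over a finite horizon, derives an inequality at the maximizing index, and then optimizes over $K$ to obtain the stated bound. Your approach works directly with the inverse sequence $y_k = 1/E_k$: monotonicity of $(y_k)$ gives the uniform lower bound $y_{k+1} \geq y_0 = 1/(\tau\lambda_0)$, hence $y_{k+1} - y_k = 2 - 1/y_{k+1} \geq 2 - \tau\lambda_0$, and telescoping yields $y_k \geq y_0 + k(2-\tau\lambda_0)$, which inverts to exactly the same bound. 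Your argument is more elementary, avoids the auxiliary functions entirely, and is closer in spirit to the discrete Gr\"onwall method the paper itself uses later for the Granular-Medium case in Appendix B.3; the paper's maximizer argument is a self-contained alternative that does not rely on first establishing monotonicity. One cosmetic point: the justification ``$y_{k+1} \geq 1/2$, a condition preserved from $y_0 \geq 1$'' is slightly circular as written; the clean statement is simply that $E_{k+1} \in [0,1)$ for all $k \geq 0$ by construction of the comparison sequence (Definition~\ref{def: Comparison_Sequence}), so $y_{k+1} > 1 > 1/2$ automatically, and monotonicity follows without any induction.
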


    This result is a small improvement of P.W. Lee result, as it does not ask for any regularity on the initial data, is better in the case of regular initial data, and recover the classical estimate in the $\tau \to 0$ limit. We shall note however that most of the ingredients were already present in his work, only a more precise study of the asymptotic of the induction relation was needed to obtain the improved result. 

    When at least on of the potential is non-zero, we obtain a less quantitative estimate. 
    
    \begin{proposition}[Fokker-Planck and Granular-Medium case] \label{prop: Asymptotic_Estimate_Fokker_Planck_Granular_Medium}
        Suppose that $\Lambda > 0$ (i.e. at least $V$ or $W$ is non zero). Let $\tau < \tau^*$, and consider $E_k^\tau$ the comparison sequence starting from $\lambda_0$.
        \begin{enumerate}
            \item If $\lambda_0 < +\infty$, then for all $\epsilon > 0$, we can find $\tau(\epsilon) > 0$ such that for all $t \geq 0$, $\tau < \tau(\epsilon)$ one has
            \begin{equation}
                \frac{1}{\tau} E_t^\tau \leq (1+\epsilon) \frac{\Lambda \lambda_0}{\Lambda e^{-\Lambda t} + 2 \lambda_0 (1- e^{-\Lambda t})}
            \end{equation}
            \item If $\lambda_0 = +\infty$, then for all $\epsilon > 0$ and $t_0 > 0$, we can find $\tau(\epsilon,t_0) > 0$ such that for all $t \geq t_0$, $\tau < \tau(\epsilon,t_0)$ one has
            \begin{equation}
                \frac{1}{\tau} E_t^\tau \leq (1+\epsilon) \frac{\Lambda}{2 (1-e^{-\Lambda t})}
            \end{equation}
        \end{enumerate}
    \end{proposition}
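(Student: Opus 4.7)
The plan is to change variables to $y_k := \tau/E_k^\tau \in [0,+\infty)$, so that the target upper bound on $\tfrac{1}{\tau} E_t^\tau$ becomes a lower bound $y_k \geq y(k\tau)/(1+\epsilon)$ on $y_k$, where $y(t) := y_0 e^{-\Lambda t} + \tfrac{2}{\Lambda}(1-e^{-\Lambda t})$, with $y_0 := 1/\lambda_0$ (and $y_0 = 0$ when $\lambda_0 = +\infty$), is the solution of the linear ODE $\dot y = 2 - \Lambda y$ (a standard substitution turning the logistic ODE from the continuous proof into a linear one). The recursion for $y_k$ turns out to be a perturbed forward Euler scheme for this ODE, and the strategy is to compare the two.

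Direct algebraic manipulation of $G[E_{k+1}^\tau,\tau]=E_k^\tau$ converts the recursion into the quadratic equation
\begin{equation*}
    y_{k+1}^2 - y_{k+1}\bigl[2\tau + y_k(1-\tau\Lambda)\bigr] + \tau^2\bigl(1 - y_k(\lambda^*+L^*)\bigr) = 0,
\end{equation*}
and $y_{k+1}$ is its larger root. Its discriminant equals $y_k^2(1-\tau\Lambda)^2 + 4\tau y_k(1-\tau\lambda^*)$, and expanding via $\sqrt{1+x} = 1 + x/2 + O(x^2)$ gives
\begin{equation*}
    y_{k+1} = (1-\tau\Lambda) y_k + 2\tau + \tau^2 R_k, \qquad |R_k| \leq C(1 + 1/y_k),
\end{equation*}
for $\tau < \tau^*$ with $C$ depending only on $\Lambda$ and $\lambda^* + L^*$. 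The unperturbed iteration $\tilde y_{k+1} = (1-\tau\Lambda)\tilde y_k + 2\tau$ solves explicitly to $\tilde y_k = y_0(1-\tau\Lambda)^k + \tfrac{2}{\Lambda}(1 - (1-\tau\Lambda)^k)$, and the identity $(1-\tau\Lambda)^k = e^{-\Lambda k\tau}(1 + O(\tau \cdot k\tau))$, valid uniformly since $e^{-\Lambda k\tau}\cdot k\tau$ is bounded on $[0,+\infty)$, yields $\tilde y_k = y(k\tau) + O(\tau)$ uniformly in $k \geq 0$.

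In Case (i), $y_0 = 1/\lambda_0 > 0$ and a monotone induction using the sign of $2 - \Lambda y$ shows that the iteration preserves the invariant interval $[\min(y_0, 2/\Lambda)/2, \,2\max(y_0, 2/\Lambda)]$ for $\tau$ small, hence $y_k \geq c > 0$ uniformly and $|R_k| \leq C(1 + 1/c)$. A discrete Grönwall applied to $\delta_k := y_k - \tilde y_k$ via $\delta_{k+1} = (1-\tau\Lambda)\delta_k + \tau^2 R_k$ gives $|\delta_k| \leq C\tau/\Lambda$ uniformly, whence $|y_k - y(k\tau)| = O(\tau)$ uniformly; since $y(t) \geq c$, inverting yields $1/y_k \leq (1+\epsilon)/y(k\tau)$ for $\tau$ small enough, which is the claim.

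The main obstacle is Case (ii), where $\lambda_0 = +\infty$ so $y_0 = 0$ and $R_k$ is unbounded near $k = 0$. The first step gives $y_1 = \tau$ exactly from the quadratic (the discriminant vanishes). The plan is to split at time $t_0/2$: on the initial phase $k\tau \leq t_0/2$, I will establish the lower bound $y_k \geq c\, k\tau$ by induction from $y_1 = \tau$, using the monotonicity of the quadratic root in $y_k$ and the explicit recursion to absorb the error term $\tau^2/y_k$ for $\tau$ small compared to $t_0$. This makes the cumulative error at most $\tau^2 \sum_{j=1}^N 1/y_j \leq (\tau/c) \log(1/\tau)$ with $N = \lfloor t_0/(2\tau)\rfloor$, which is $o(1)$ as $\tau \to 0$. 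Past time $t_0/2$, one then has $y_k \geq \tfrac{2}{\Lambda}(1-e^{-\Lambda t_0/2}) - o(1) \geq c'(t_0) > 0$ for $\tau$ small, and the Case~(i) argument applies on the remainder; restricting to $t \geq t_0$ absorbs the $o(1)$ transient into the $(1+\epsilon)$ factor.
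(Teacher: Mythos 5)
Your proposal is correct in its essentials and takes a genuinely different route from the paper's, especially in case (ii). Both arguments pass to the inverse sequence $y_k = \tau/E_k^\tau$, which satisfies an approximate forward-Euler scheme for $\dot y = 2 - \Lambda y$. The paper keeps the relation implicit, writing $H[X_{k+1},\tau]=X_k$, Taylor-expands $H$ in $\tau$, and feeds the remainder bound into a comparison lemma (Proposition~\ref{prop: Comparison_With_ODE}); to handle the degenerate start $y_0=0$ it must first invoke Lemma~\ref{lemma: Uniform_Convergence_0}, which in turn relies on the cross-$\tau$ monotonicity of Lemma~\ref{lemma: Generalized_Comparison}. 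You instead solve the quadratic in $y_{k+1}$ explicitly, expand the square root, and handle the degenerate start by establishing the lower bound $y_k \geq k\tau$ directly by induction from $y_1 = \tau$, which bypasses Lemmas~\ref{lemma: Uniform_Convergence_0} and~\ref{lemma: Generalized_Comparison} entirely and makes the $O(\tau\log(1/\tau))$ transient error explicit; this is arguably cleaner. Two details to tighten when writing this up. First, squaring the discriminant inequality shows that the inductive step $y_{k+1} \geq (k+1)\tau$ (with $c=1$) holds precisely when $k\tau \,\Lambda \leq 1 - \tau\lambda^*$, i.e.\ the induction only propagates up to time roughly $1/\Lambda$, not up to $t_0/2$; past that time you must fall back on the invariant-interval argument to keep $y_k$ bounded below, and then re-sum the error accordingly (the conclusion survives because the error contribution past that time is $O(\tau)$, not $O(\tau\log(1/\tau))$). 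Second, since the piecewise-constant interpolation has $E_t^\tau = E_k^\tau$ on $[k\tau,(k+1)\tau)$ while $y(t)$ can exceed $y(k\tau)$, the final inversion step should use $y_k \geq y(k\tau) - O(\tau)$ together with the Lipschitz continuity of $y(\cdot)$ to get $y_k \geq y(t) - O(\tau)$ uniformly on $[k\tau,(k+1)\tau)$, as the paper does.
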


    Using those two previous propositions, we can conclude the proof of theorem \ref{thm: Asymptotic_Li_Yau_Hamilton_Estimate}.

    \begin{proof}[Proof of theorem \ref{thm: Asymptotic_Li_Yau_Hamilton_Estimate}]
        Using the discrete comparison principle \ref{lemma: Discrete_Comparison_Principle}, it suffices to use the asymptotics estimates for the comparison sequence. 
    
        The case of the Fokker-Planck and Granular-Medium is already proved in theorem \ref{prop: Asymptotic_Estimate_Fokker_Planck_Granular_Medium}, therefore we only need to consider the Heat case. 

        First if $\lambda_0 = +\infty$, fix $\epsilon > 0$, we can find $k(\epsilon)$ such that for all $k \geq k(\epsilon)$, one has $E_k \leq \frac{1+\epsilon}{2k}$. Now let $t_0 > 0$, and $t \geq t_0$. Fix $k$ such that $t \in [k \tau, (k+1) \tau)$, so that $E_t^\tau = E_k$. Then if $k \geq k(\epsilon)$, we obtain $E_t^\tau \leq \frac{1+\epsilon}{2 k \tau} \leq \frac{1+\epsilon}{2 t}$. Since we know that $\tau(k+1) \geq t_0$, to have $k \geq k(\epsilon)$ it suffices to ask that $\frac{t_0}{\tau} - \tau \geq k(\epsilon)$, which 
        is true for $\tau$ small enough. 

        Now if $\lambda_0 < +\infty$, consider $t \geq 0$ and $k$ such that $t \in [k \tau, (k+1) \tau)$, so that $E_t^\tau = E_k^\tau$. Assuming that $\tau \lambda_0 \leq 1$, we then have 
        \[ 
            \frac{1}{\tau} E_t^\tau \leq \frac{\lambda_0}{k \tau (2 - \tau \lambda_0) + 1} \leq \frac{\lambda_0}{t \lambda_0 (2-\tau \lambda_0) + 1} = \frac{\lambda_0}{2 t \lambda_0+1} \left ( 1 + \tau \lambda_0 \frac{t \lambda_0}{t \lambda_0 (2-\tau \lambda_0) + 1} \right )
        \]
        Using that $\tau \lambda_0 \leq 1$, and that $x  \to \frac{x}{1+x}$ is bounded by $1$ on $[0,+\infty]$ we obtain
        \[
           E_t^\tau \leq \frac{\lambda_0}{2 t \lambda_0 + 1}(1 + \tau \lambda_0) \leq (1+\epsilon) \frac{\lambda_0}{2 t \lambda_0 + 1}
        \]
        whenever $\tau \leq \frac{\epsilon}{\lambda_0}$. 
    \end{proof}

    \begin{remark}
        Note that in the case of the heat equation, when $\lambda_0 < +\infty$, the proof shows that one can take $\tau(\epsilon) = \frac{\epsilon}{\lambda_0}$.  
    \end{remark}

\section{Applications: Estimates and Local in time convergence}

    As in the classical case, the Li-Yau-Hamilton inequality has several consequences at the level of the JKO scheme : uniform Lipschitz estimate, with the $L^p_{t,\loc} C^{0,\alpha}_x$ convergence as a consequence, boundness of solution and Harnack inequality. We shall also see later on that one can use this estimate to derive $L^2_{t,\loc} H^2_x(\bb{R}_+^* \times \bb{T}^d)$ convergence of the flow in the Fokker-Planck case, for any initial data with finite entropy. 

\subsection{Lipschitz and $L^\infty$-Bounds and $L^p_{t,\rm{loc}} C^{0,\alpha}$-convergence}

    By the semi-convexity bound, and the log-Lipschitz to $L^\infty$ bound given by lemma \ref{lemma: Log-Lip_to_L_infty} we have the following. 

    \begin{proposition}[Universal Bounds] \label{prop: universal_bounds}
        Let $(\rho_k^\tau)_{k \geq 0}$ be any iteration of the JKO flow. Let $t_0 > 0$. Then there exists $\tau_0 > 0$ such that for any $\tau \leq \tau_0$, $(\rho_t^\tau)_{t \geq t_0}$ is uniformly Lipschitz in space and uniformly bounded away from $0$ to $+\infty$ with constant depending only on $\tau_0,t_0,V$ and $W$. 
    \end{proposition}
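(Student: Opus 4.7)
The plan is to treat this as the JKO-level analogue of Proposition \ref{prop: Quantitative_Lipschitz_L_Infty_Bounds}: we use the asymptotic Li--Yau--Hamilton estimate just established to obtain uniform (in $\tau$ and $t \geq t_0$) semi-convexity of the discrete pressure $u[\rho_t^\tau]$, and then convert this semi-convexity into a Lipschitz bound on $\log \rho_t^\tau$ via Lemma \ref{lemma: gradient_estimate}, and into the $L^\infty$ bounds via Lemma \ref{lemma: Log-Lip_to_L_infty}. Since no regularity on $\rho_0$ is assumed, we apply Theorem \ref{thm: Asymptotic_Li_Yau_Hamilton_Estimate} in its $\lambda_0 = +\infty$ form.

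Concretely, I would fix once and for all some $\epsilon > 0$ (say $\epsilon = 1$) and apply Theorem \ref{thm: Asymptotic_Li_Yau_Hamilton_Estimate}(2) with this $\epsilon$ and the given $t_0$. This produces a threshold $\tau_0 := \tau(\epsilon, t_0) > 0$ such that for every $\tau \leq \tau_0$ and every $t \geq t_0$,
\[
D^2 u[\rho_t^\tau] \succeq -E(t_0), \qquad E(t_0) := (1+\epsilon)\,\frac{\Lambda}{2(1-e^{-\Lambda t_0})}
\]
(with the natural replacement $E(t_0) = (1+\epsilon)/(2t_0)$ in the case $\Lambda = 0$). The point is that the right-hand side of the asymptotic estimate is non-increasing in $t$, so a single constant depending only on $t_0$, $V$ and $W$ bounds the semi-convexity uniformly on $[t_0,+\infty)$, and this constant does not depend on $\tau \leq \tau_0$ nor on the particular JKO flow.

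With semi-convexity in hand, Lemma \ref{lemma: gradient_estimate} applied to $u[\rho_t^\tau]$ gives $|\nabla u[\rho_t^\tau]|_\infty \leq \tfrac{1}{2} E(t_0)$. Using the identity $\log \rho_t^\tau = u[\rho_t^\tau] - V - W*\rho_t^\tau$ and the triangle inequality, one obtains
\[
|\nabla \log \rho_t^\tau|_\infty \leq \tfrac{1}{2} E(t_0) + |\nabla V|_\infty + |\nabla W|_\infty =: L(t_0),
\]
so $\log \rho_t^\tau$ is $\sqrt{d}\,L(t_0)$-Lipschitz (using $|v|_2 \leq \sqrt{d}\,|v|_\infty$). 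Feeding this into Lemma \ref{lemma: Log-Lip_to_L_infty} yields the two-sided bound
\[
\exp\bigl(-\tfrac{d\sqrt{d}}{2} L(t_0)\bigr) \leq \rho_t^\tau \leq \exp\bigl(\tfrac{d\sqrt{d}}{2} L(t_0)\bigr).
\]
Finally, since $\nabla \rho_t^\tau = \rho_t^\tau\, \nabla \log \rho_t^\tau$, the above $L^\infty$ upper bound combined with the Lipschitz bound on $\log \rho_t^\tau$ gives a uniform Lipschitz bound on $\rho_t^\tau$ itself, with constants depending only on $t_0$, $V$ and $W$ (and $d$). There is no real obstacle here: the whole argument is just the bookkeeping that carries the single input (the asymptotic Li--Yau--Hamilton estimate) through three elementary lemmas already proved in the paper; the only delicate point is to verify that the semi-convexity constant can be chosen independently of $t \geq t_0$, which uses monotonicity in $t$ of the right-hand side.
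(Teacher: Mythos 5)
Your proof is correct and takes essentially the same route as the paper: fix $\epsilon$, invoke Theorem \ref{thm: Asymptotic_Li_Yau_Hamilton_Estimate} with $\lambda_0 = +\infty$ to get a $\tau$-uniform semi-convexity bound for $t \geq t_0$, then push this through Lemma \ref{lemma: gradient_estimate} and Lemma \ref{lemma: Log-Lip_to_L_infty}. You simply spell out the bookkeeping (monotonicity of the right-hand side in $t$, the identity $\nabla \rho_t^\tau = \rho_t^\tau\,\nabla\log\rho_t^\tau$) that the paper leaves implicit.
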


    \begin{proof}
        Fix $\epsilon$ small enough, so that we can find $\tau_0$ with $D^2 u[\rho_t^\tau] \geq -2 X_t$ where $X_t$ is the function appearing in the asymptotic estimate, for all $\tau < \tau_0$ and $t \geq t_0$. Then the conclusion follows as in the classical case using lemma \ref{lemma: Log-Lip_to_L_infty}. 
    \end{proof}

    As a consequence of the previous estimates, we can improve slightly the weak convergence of the scheme by some compactness Aubin-Lions lemma argument. 

    \begin{proposition}[$L^p_{t,\rm{loc}} C^{0,\alpha}$-convergence]
        Suppose $\scr{F}[\rho_0] < +\infty$, then for any $\alpha < 1$ and $p < +\infty$, $(\rho_t^\tau)_{t \geq 0}$ converges to the unique solution of the Aggregation-Diffusion starting from $\rho_0$ in $L^p([t_0,T];C^{0,\alpha}(\bb{T}^d))$. Furthermore, $\nabla \rho^\tau_t \to \nabla \rho_t$ a.e. for all $t > 0$.   
    \end{proposition}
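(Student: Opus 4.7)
The plan is to combine the universal Lipschitz and $L^\infty$ bounds from Proposition~\ref{prop: universal_bounds} with the $\bb{W}_2$-convergence of the scheme provided by Theorem~\ref{thm: JKO_converges}, and to upgrade the latter to strong convergence via elementary compactness arguments. No further dynamical information about the scheme is needed beyond what has already been established.

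Fix $0 < t_0 < T$ and choose $\tau_0$ small enough that Proposition~\ref{prop: universal_bounds} applies on $[t_0,T]$: the family $\{\rho_t^\tau : t \in [t_0,T],\, \tau \leq \tau_0\}$ is then uniformly bounded in $C^{0,1}(\bb{T}^d)$, and uniformly bounded away from $0$ and $+\infty$. For each fixed $t \in [t_0,T]$, Arzel\`a--Ascoli yields precompactness of $\{\rho_t^\tau\}_\tau$ in $C(\bb{T}^d)$, and any subsequential uniform limit must coincide with the $\bb{W}_2$-limit $\rho_t$ (uniform convergence of bounded densities implies narrow, hence $\bb{W}_2$-convergence). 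Thus $\rho_t^\tau \to \rho_t$ uniformly on $\bb{T}^d$ for each $t > 0$. The classical interpolation inequality
\[
    \|f\|_{C^{0,\alpha}} \leq C\, \|f\|_\infty^{1-\alpha}\, \|f\|_{C^{0,1}}^{\alpha},
\]
applied to $f = \rho_t^\tau - \rho_t$, together with the uniform $C^{0,1}$ bound, upgrades this to $C^{0,\alpha}$-convergence pointwise in $t$. Dominated convergence against the uniform $C^{0,\alpha}$ majorant then gives convergence in $L^p([t_0,T]; C^{0,\alpha}(\bb{T}^d))$ for every $p < +\infty$, which since $0 < t_0 < T$ are arbitrary yields the announced $L^p_{\loc}((0,+\infty); C^{0,\alpha})$-convergence.

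For the pointwise a.e. convergence of $\nabla \rho_t^\tau$ at each fixed $t > 0$, I would argue via the pressure variable. Theorem~\ref{thm: Asymptotic_Li_Yau_Hamilton_Estimate} provides a uniform-in-$\tau$ semi-convexity bound for $u[\rho_t^\tau]$. Combined with the uniform convergence $\rho_t^\tau \to \rho_t$ (and the consequent uniform convergence of $W * \rho_t^\tau$), this implies $u[\rho_t^\tau] \to u[\rho_t]$ uniformly, with $u[\rho_t]$ itself semi-convex by Lemma~\ref{lemma: Stability_Semi_Convexity_Pressure}. A standard stability result for subdifferentials of uniformly semi-convex functions then yields $\nabla u[\rho_t^\tau](x) \to \nabla u[\rho_t](x)$ at every point of differentiability of $u[\rho_t]$, which is a.e. by Rademacher. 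Writing
\[
    \nabla \rho_t^\tau = \rho_t^\tau \bigl(\nabla u[\rho_t^\tau] - \nabla V - \nabla W * \rho_t^\tau\bigr)
\]
and passing to the limit in each factor, using the uniform convergence of $\rho_t^\tau$ and $\nabla W * \rho_t^\tau$, gives $\nabla \rho_t^\tau \to \nabla \rho_t$ a.e.

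The main technical point is the invocation of the semi-convex-stability lemma for gradients in the final paragraph; the remainder is a routine combination of Arzel\`a--Ascoli, interpolation, and dominated convergence, made possible precisely because the Li--Yau--Hamilton estimate provides the uniform $C^{0,1}$ control that weak $\bb{W}_2$-convergence alone cannot supply.
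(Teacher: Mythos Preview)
Your proof is correct but follows a genuinely different, more elementary route than the paper. The paper invokes the piecewise-constant Aubin--Lions lemma (Theorem~\ref{thm: Aubin_Lions}), which in turn requires an additional time-regularity estimate, namely the bound $\tau^{-1}\|\rho^\tau-\rho^\tau(\cdot-\tau)\|_{L^1([t_0+\tau,T];Y)}\leq C$ with $Y$ the dual of mean-zero Lipschitz functions, obtained by summing $W_1(\rho_{k+1}^\tau,\rho_k^\tau)$. You bypass this entirely: since Theorem~\ref{thm: JKO_converges} already identifies the $\bb{W}_2$-limit at every fixed $t$, Arzel\`a--Ascoli plus interpolation upgrades the uniform $C^{0,1}$ bound to $C^{0,\alpha}$-convergence pointwise in $t$, and dominated convergence finishes the $L^p$ part. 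This is shorter and requires no further dynamical input. The Aubin--Lions route, by contrast, is the standard strategy when the pointwise limit is not known in advance and only compactness is sought; it is more portable but overkill here. You also supply an explicit argument for the a.e.\ gradient convergence via stability of subdifferentials of uniformly semi-convex functions, which the paper's proof of this proposition does not spell out (the same reasoning appears later, in the proof of the $L^2_{t,\loc}H^2_x$ result).
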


    \begin{proof}
        We shall use the generalization of Aubin-Lions lemma for piecewise constant function stated below \ref{thm: Aubin_Lions}. We consider $Y$ the dual of Lipschitz function with average $0$ on $\bb{T}^d$. By an argument similar to the one of \cite{L2H2}. We have
        \begin{align*}
            \tau^{-1} ||\rho^\tau-\rho^\tau(\cdot-\tau)||_{L^1([t_0 + \tau,T];Y)} \leq \sum_{k = K}^N W_1(\rho_{k+1}^\tau,\rho_k^\tau) \leq C 
        \end{align*}
        Hence we have the bound on this space. Let $B = C^{0,\alpha}(\bb{T}^d)$, then we can apply the theorem as $W^{1,+\infty}(\bb{T}^d) \hookrightarrow C^{0,\alpha}(\bb{T}^d)$ is compact. Therefore the sequence is relatively compact in \\ $L^p([0,T];C^{0,\alpha}(\bb{T}^d))$. But since this convergence implies weak convergence, we deduce the result. 
    \end{proof}

    \begin{theorem}[\cite{AubinLions} (see also \cite{RossiSavare}) piecewise constant Aubin-Lions lemma] \label{thm: Aubin_Lions}
        Let $X,B,Y$ be Banach spaces with $X \hookrightarrow B$ compactly and $B \hookrightarrow Y$ continuously. Let $(u^\tau)_{\tau \geq 0} \in L^\infty([0,T];X)$ be constant on each $[k \tau, (k+1) \tau]$, and suppose that for some constant $C$ one has
        \begin{equation}
            \tau^{-1} ||u^\tau - u^\tau(\cdot-\tau)||_{L^1([\tau,T];Y)} + ||u^\tau||_{L^\infty([0,T];Y)} \leq C
        \end{equation}
        for all $\tau \ll 1$. Then $(u^\tau)_{\tau}$ is relatively compact in $L^p([0,T];B)$ for all $p < +\infty$. 
    \end{theorem}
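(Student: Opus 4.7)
My plan is to establish relative compactness in $L^1([0,T];B)$ via the Simon–Fréchet–Kolmogorov criterion, and then upgrade to $L^p([0,T];B)$ for every $p<+\infty$ using the (implicit) uniform $L^\infty([0,T];X)$ bound of the family, together with a dominated-convergence argument along a converging subsequence. The criterion requires two ingredients: (i) for every subinterval $[t_0,t_1]\subset[0,T]$, the time-averages $\int_{t_0}^{t_1} u^\tau\,dt$ are relatively compact in $B$; (ii) $\|u^\tau(\cdot+h)-u^\tau(\cdot)\|_{L^1([0,T-h];B)}\to 0$ as $h\to 0^+$, uniformly in $\tau$. Point (i) is immediate, since the averages are uniformly bounded in $X$ and $X\hookrightarrow B$ is compact.

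For (ii), I would first derive the analogous bound in $Y$. When $h=m\tau$ with $m$ a positive integer, the telescoping identity
\[
    u^\tau(t+h) - u^\tau(t) = \sum_{j=0}^{m-1} \bigl[u^\tau(t+(j+1)\tau) - u^\tau(t+j\tau)\bigr]
\]
combined with the triangle inequality in $Y$, Fubini, and the hypothesis $\tau^{-1}\|u^\tau-u^\tau(\cdot-\tau)\|_{L^1([\tau,T];Y)}\leq C$ gives $\|u^\tau(\cdot+h)-u^\tau(\cdot)\|_{L^1([0,T-h];Y)}\leq Ch$. For general $h$, one writes $h=m\tau+r$ with $r\in[0,\tau)$ and estimates the $r$-piece separately, which contributes only an $O(\tau)$ remainder. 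To pass from $Y$ to $B$ I would invoke Ehrling's lemma (which uses precisely $X\hookrightarrow B$ compact and $B\hookrightarrow Y$ continuous): for every $\epsilon>0$ there is $C_\epsilon>0$ with $\|v\|_B\leq \epsilon\|v\|_X+C_\epsilon\|v\|_Y$. Applied to $v=u^\tau(t+h)-u^\tau(t)$ and integrated in $t$, using the uniform $X$-bound on $u^\tau$, this yields
\[
    \|u^\tau(\cdot+h)-u^\tau(\cdot)\|_{L^1([0,T-h];B)} \leq 2\epsilon C\, T + C_\epsilon C h + O(\tau),
\]
which tends to $0$ uniformly in $\tau$ as $h\to 0$ by first choosing $\epsilon$ small and then $h$ small.

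The $L^p$-upgrade is standard: extract a subsequence $u^{\tau_n}$ converging in $L^1([0,T];B)$ and, refining further, a.e. in $B$; the pointwise domination $\|u^{\tau_n}(t)\|_B\leq M$ inherited from the uniform $L^\infty([0,T];X)$ bound and the continuous embedding $X\hookrightarrow B$ lets dominated convergence conclude in $L^p([0,T];B)$ for every $p<+\infty$. The only real obstacle is the bookkeeping for translations $h\notin\tau\mathbb{Z}$, but it is purely technical once the telescoping identity is set up; the conceptual content is fully carried by that identity together with Ehrling's lemma.
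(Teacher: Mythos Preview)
The paper does not prove this theorem: it is quoted as a known result from \cite{AubinLions} (see also \cite{RossiSavare}) and used as a black box in the proof of the $L^p_{t,\rm{loc}}C^{0,\alpha}$-convergence proposition. There is therefore no ``paper's own proof'' to compare against.

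Your sketch is the standard route (Simon's compactness criterion combined with Ehrling's lemma) and is essentially correct, but one step is imprecisely stated and, as written, does not close. You claim that the $r$-piece of the translation, with $h=m\tau+r$ and $r\in[0,\tau)$, contributes ``only an $O(\tau)$ remainder'', and then you carry this $O(\tau)$ into the final displayed bound. An $O(\tau)$ term that does not depend on $h$ would \emph{not} tend to zero as $h\to 0$ uniformly over the family $\{u^\tau\}_\tau$, so condition (ii) of Simon's criterion would fail as you have written it. The correct computation is that, since $u^\tau$ is piecewise constant on intervals of length $\tau$, a shift by $r<\tau$ produces, on each interval, a nonzero difference equal to the jump $u^\tau_{k+1}-u^\tau_k$ on a set of measure $r$; hence
\[
    \|u^\tau(\cdot+r)-u^\tau(\cdot)\|_{L^1([0,T-r];Y)}
    = \frac{r}{\tau}\sum_k \tau\,\|u^\tau_{k+1}-u^\tau_k\|_Y
    = \frac{r}{\tau}\,\|u^\tau-u^\tau(\cdot-\tau)\|_{L^1([\tau,T];Y)} \leq Cr \leq Ch.
\]
So the $r$-piece is actually $O(h)$, not merely $O(\tau)$, and your final bound becomes $2\epsilon C_X T + C'_\epsilon C h$, which does vanish as $h\to 0$ uniformly in $\tau$ after first fixing $\epsilon$. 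With this correction, your argument goes through.
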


\subsection{Harnack Inequality}

    We shall prove a version of the Harnack inequality for the JKO scheme. We shall note that this, in the $\tau \to 0$ limit, does not recover the full Harnack inequality for the Granular-Medium equation. We believe that a more precise study of the argument in the proof might recover the continuous time version. The proof follows closely the proof of P.W. Lee for the heat equation, with some minor modifications. 

    This Harnack estimate takes the following form:

    \begin{theorem}
        Let $t_0,\epsilon > 0$. Then there exists $\tau(\epsilon,t_0)$ depending only on $\epsilon,t_0,V,W$, and a constant $C$ depending only on $t_0,V,W$ and an upper bound on $\epsilon$ such that for all $t \geq t_0$, $h > 0$ one has
        \[  
            \rho_t^\tau(x) \leq \rho_{t+h}^\tau(y) \left ( \frac{e^{\Lambda (t+h)}-1}{e^{\Lambda t}-1} \right )^{(1+\epsilon)d} \exp \left ( \frac{1}{2(h-\tau)} |x-y|^2 + \frac{h}{2} A + C (h+h^{-1}+1) \tau \right )
        \]
    \end{theorem}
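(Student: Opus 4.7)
The plan is to adapt the continuous Harnack proof (Theorem~\ref{thm: Harnack_Ineq}) to the JKO scheme by replacing the computation of $\tfrac{\diff}{\diff s}\log\rho_s(\gamma_s)$ along a smooth curve with a telescoping identity along a discrete chain of points sampling a straight segment from $x$ to $y$. At each step the PDE is replaced by the Monge--Amp\`ere identity for the transport between two successive iterates, and the pointwise bound $\Delta u_s \geq -d E_s$ is replaced by a determinant upper bound coming from the asymptotic Li--Yau--Hamilton estimate. Fix $\epsilon > 0$ small. By Theorem~\ref{thm: Asymptotic_Li_Yau_Hamilton_Estimate}, for $\tau < \tau(\epsilon,t_0)$ one has $D^2 u[\rho_s^\tau] \succeq -(1+\epsilon) \bar{E}_s$ for all $s \geq t_0$, where $\bar{E}_s$ denotes the right-hand side of that theorem. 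Combined with the JKO optimality $u[\rho_{k+1}^\tau] = -\psi_k/\tau + \mathrm{cst}$ (with $\psi_k$ the Kantorovich potential from $\rho_{k+1}^\tau$ to $\rho_k^\tau$), this yields the pointwise eigenvalue bound $D^2 \psi_{k+j} \preceq (1+\epsilon)\tau \bar{E}_{(k+j+1)\tau}$.

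Choose integers $k,N$ so that $\rho_t^\tau = \rho_k^\tau$, $\rho_{t+h}^\tau = \rho_{k+N}^\tau$ and $N\tau \in (h-\tau, h]$. Pick a representative of $y-x$ in $\bb{R}^d$ realizing $d(x,y)$, and set $y_j := x + (j/N)(y-x)$ so $|y_{j+1} - y_j| = d(x,y)/N$. Applying the Monge--Amp\`ere identity
\[
    \log \rho_{k+j+1}^\tau(z) - \log \rho_{k+j}^\tau(T_{k+j}(z)) = \log\det(I - D^2\psi_{k+j}(z)), \qquad T_{k+j} = \id - \nabla \psi_{k+j},
\]
at $z = y_{j+1}$ and telescoping $\sum_{j=0}^{N-1}[\log \rho_{k+j+1}^\tau(y_{j+1}) - \log \rho_{k+j}^\tau(y_j)]$ decomposes $\log(\rho_t^\tau(x)/\rho_{t+h}^\tau(y))$ into a ``volume'' contribution $-\sum_j \log\det(I - D^2\psi_{k+j}(y_{j+1}))$ and a ``transport'' contribution $\sum_j [\log\rho_{k+j}^\tau(y_j) - \log\rho_{k+j}^\tau(T_{k+j}(y_{j+1}))]$.

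For the volume part, the eigenvalue upper bound on $D^2\psi_{k+j}$ and the elementary inequality $-\log(1-\mu) \leq \mu/(1-\mu)$ (valid for all $\mu < 1$, and giving a non-positive contribution when $\mu \leq 0$) yield $-\log\det(I - D^2\psi_{k+j}) \leq (1+o_\tau(1)) d \tau \bar{E}_{(k+j+1)\tau}$. Comparing the Riemann sum with the integral $(1+\epsilon)d \int_t^{t+h} \bar{E}_s \diff s$ and using the explicit primitive $\bar{E}_s = \tfrac{\diff}{\diff s}[\tfrac{1}{2}\log(e^{\Lambda s}-1)]$ produces the factor $((e^{\Lambda(t+h)}-1)/(e^{\Lambda t}-1))^{(1+\epsilon)d}$, up to a discretization error of order $h^{-1}\tau$ on the interval of length $h$. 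For the transport part, Taylor-expand $\log\rho_{k+j}^\tau$ around $y_{j+1}$; since $T_{k+j}(y_{j+1}) - y_{j+1} = \tau \nabla u[\rho_{k+j+1}^\tau](y_{j+1})$ and $\nabla u = \nabla\log\rho + \nabla V + \nabla W * \rho$, completing the square via Young's inequality $2 a \cdot b \leq a^2/\tau + \tau b^2$ bounds the $j$-th summand by $|y_{j+1}-y_j|^2/(2\tau) + \tau \cdot \mathrm{const}(V,W) + O(\tau^2)$, with the second-order Taylor residue controlled by the uniform $C^{0,1}$ bound of Proposition~\ref{prop: universal_bounds}. Summation gives $|x-y|^2/(2N\tau) \leq |x-y|^2/(2(h-\tau))$, plus $hA/2 + O(\tau)$ for some $A=A(V,W)$.

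Aggregating all errors---the volume Riemann-sum error $\sim h^{-1}\tau$, the transport second-order residue $\sim N\tau^2 \sim h\tau$, and the boundary corrections at $t$ and $t+h$ of size $\tau$---produces a total correction of the stated form $C(h + h^{-1} + 1)\tau$. Exponentiating the resulting additive inequality gives the stated Harnack bound. The main technical obstacle is the careful handling of the transport term: the leading quadratic $|y_{j+1}-y_j|^2/(2\tau)$ is sensitive to the precise relation $N\tau$ vs $h$ (hence the denominator $h-\tau$ rather than $h$ in the statement), and non-positive eigenvalues of $D^2\psi_{k+j}$ in the volume estimate are not controlled by the Li--Yau--Hamilton bound and must simply be dropped; ensuring that all the $O(\tau)$ errors aggregate uniformly in $N$, especially when $h$ is small, is the most delicate point.
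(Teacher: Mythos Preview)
Your high-level strategy (telescoping along a discrete chain, Monge--Amp\`ere for the volume contribution, Li--Yau--Hamilton to bound the determinant, Riemann-sum comparison with the continuous integral) matches the paper, and your treatment of the volume part is essentially correct. The gap is in the \emph{transport part}. You propose to control
\[
    \log\rho_{k+j}^\tau(y_j)-\log\rho_{k+j}^\tau\bigl(T_{k+j}(y_{j+1})\bigr)
\]
by a first-order Taylor expansion of $\log\rho_{k+j}^\tau$ around $y_{j+1}$, with the residue ``controlled by the uniform $C^{0,1}$ bound''. But a $C^{0,1}$ bound on $\log\rho$ only controls a \emph{zeroth}-order residue; to control the remainder after a first-order expansion you would need a two-sided Hessian bound on $\log\rho_{k+j}^\tau$, and the Li--Yau--Hamilton estimate only gives the lower bound. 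Even if you use the available semi-convexity to get a one-sided inequality, the subsequent ``completion of the square'' fails: the gradient that appears from the expansion is $\nabla\log\rho_{k+j}^\tau$, whereas the displacement $T_{k+j}(y_{j+1})-y_{j+1}=\tau\nabla u[\rho_{k+j+1}^\tau](y_{j+1})$ carries the time index $k+j+1$; without a uniform bound on $\nabla\log\rho_{k+j+1}^\tau-\nabla\log\rho_{k+j}^\tau$ (which is unavailable), the $\tau|\nabla\log\rho|^2$ term produced by Young cannot be absorbed, and you are left with an $O(h)$ error whose constant depends on the Lipschitz norm of $\log\rho^\tau$ (hence on $t_0$), or worse, an $O(1)$ error.

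The paper avoids both difficulties by replacing the Taylor expansion with the \emph{exact} Kantorovich inequality. Writing $\tau u[\rho_{i+1}^\tau]=-\phi_i^\tau$ and using $\phi_i^\tau(z)+\psi_i^\tau(x)\leq\tfrac12 d(x,z)^2$ with equality at $z=S_i^\tau x$ yields
\[
    \tau\,u[\rho_{i+1}^\tau](S_i^\tau x)\;\leq\;\tfrac12|x-y|^2-\tfrac12|S_i^\tau x-x|^2+\tau\,u[\rho_{i+1}^\tau](y),
\]
an inequality comparing $u[\rho_{i+1}^\tau]$ at two arbitrary points with no Taylor remainder and no time-index mismatch. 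Passing from $u[\rho_{i+1}^\tau]$ to $\log\rho_{i+1}^\tau$ then costs only the Lipschitz constants of $V$ and $W$ (this is the origin of $A=[V]_{\Lip}+[W]_{\Lip}$), and the extra term $-\tfrac{1}{2\tau}|S_i^\tau x-x|^2$ conveniently absorbs, via Young, the displacement $|x-S_i^\tau x|$ that arises there. Summing over the straight chain $x_i=x+\tfrac{i-k}{l-k}(y-x)$ then gives the stated bound. In short: replace your Taylor/Young step by Kantorovich duality and the argument goes through.
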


    (in the case $\Lambda = 0$, the term raised to the power $(1+\epsilon)d$ should be understood as $\frac{t+h}{t}$). 

    \begin{proof}
        Let $X_t := \frac{\Lambda}{2 (1-e^{-\Lambda t})}$ for $\Lambda \neq 0$, and $X_t = \frac{1}{2 t}$ else. Also fix $\overline{\epsilon} \geq \epsilon$. By the asymptotic Li-Yau-Hamilton, we can find $\tau(\epsilon,t_0)$ depending only on $V,W$ such that $D^2 u[\rho_t^\tau] \succeq -(1+\epsilon) X_t$ for all $t \geq t_0$, $\tau \leq \tau(\epsilon,t_0)$. \\

        Let's consider $k$ be such that $t \in [k \tau, (k+1) \tau)$, and $l$ be such that $t + h \in [l \tau, (l+1) \tau)$. Let $k \leq i \leq l-1$. Up to taking $\tau(\epsilon,t_0)$ smaller, we can assume that $k \geq 2$. Let $S_i^\tau$ be the transport map from $\rho_i^\tau$ to $\rho_{i+1}^\tau$, then one has the Monge-Ampère equation
        \[
            \log \rho_i^\tau(x) + \log \det(I + \tau D^2 u[\rho_{i+1}^\tau])(S_i^\tau x) = \log \rho_{i+1}^\tau(S_i^\tau x)
        \]
        by the asymptotic estimate, we have
        \[
            \log \det(I + \tau D^2 u[\rho_{i+1}^\tau])(x) \geq d \log(1 + \tau (1+\epsilon) X_{\tau i}) 
        \]
        furthermore, one can chose a pair of Kantorovich potentials $(\phi_i^\tau,\psi_i^\tau)$ from $\rho_{i+1}^\tau$ to $\rho_i^\tau$ such that $\tau u[\rho_{i+1}^\tau] = -\phi_i^\tau$. Using that $\phi_i^\tau(y) + \psi_i^\tau(x) \leq \frac{1}{2} |x-y|^2$ with equality if $y = S_i^\tau x$ we have
        \begin{align*}
            \tau u[\rho_{i+1}^\tau](S_i x) &= -\frac{1}{2} |S_i^\tau x - x|^2 + \psi_i^\tau(x) \\
            &\leq \frac{1}{2} |x-y|^2 - -\frac{1}{2} |S_i^\tau x - x|^2 - \phi_i^\tau(y)
            \\
            &= \frac{1}{2} |x-y|^2 - -\frac{1}{2} |S_i^\tau x - x|^2 + \tau u[\rho_{i+1}^\tau](y)
        \end{align*}
        this gives
        \begin{align*}
            &\log \rho_{i+1}^\tau(S_i^\tau x) = u[\rho_{i+1}^\tau](S_i^\tau x) - V(S_i^\tau x) - W * \rho_{i+1}^\tau(S_i^\tau x) \\
            &\leq \frac{1}{2 \tau} |x-y|^2 -\frac{1}{2 \tau} |S_i^\tau x - x|^2 + u[\rho_{i+1}^\tau](y) - V(S_i^\tau x) - W * \rho_{i+1}^\tau(S_i^\tau x) \\
            &= \log \rho_{i+1}^\tau(y) + \frac{1}{2 \tau} |x-y|^2 -\frac{1}{2 \tau} |S_i^\tau x - x|^2 + R_i^\tau(x,y)
        \end{align*}
        with
        \begin{align*}
             R_i^\tau(x,y) &= V(y) - V(S_i^\tau x) + W * \rho_{i+1}^\tau(y) - W * \rho_{i+1}^\tau(S_i^\tau x) \\
             &\leq ([V]_{\Lip} + [W]_{\Lip}) |y-S_i^\tau x| \\
             &\leq ([V]_{\Lip} + [W]_{\Lip}) |x-y| + ([V]_{\Lip} + [W]_{\Lip}) |x-S_i^\tau x| \\
             &\leq ([V]_{\Lip} + [W]_{\Lip}) |x-y| + \frac{1}{2 \tau} |S_i^\tau x - x|^2 + \frac{\tau}{2}  ([V]_{\Lip} + [W]_{\Lip})^2
        \end{align*}
        which finally gives
        \[
            \log \rho_i^\tau(x) + d \log(1 + \tau(1+\epsilon) X_{\tau (i+1)}) \leq \log \rho_i^\tau(y) + \frac{1}{2 \tau} |x-y|^2 + A |x-y| + \frac{A^2}{2} \tau 
        \]
        with $A :=  [V]_{\Lip} + [W]_{\Lip}$
        which is valid for any $(x,y) \in \bb{T}^d$. 
        
        We shall now sum these estimates at the points $(x_i,x_{i+1})$ where $x_i = x + \frac{i-k}{l-k}(y-x)$ for $i=k,\ldots,l$, so that $|x_{i+1} - x_i| = \frac{1}{l-k} |x-y|$. We obtain
        \begin{align*}
            &\log \rho_k^\tau(x) + \sum_{i=k}^{l-1} d \log (1 + \tau(1+\epsilon) X_{\tau(i+1)}) \\
            &\leq \log \rho_l^\tau(y) + \frac{1}{2 \tau (l-k)} |x-y|^2 + \frac{A \tau}{\tau (l-k)} |x-y| + \frac{A^2}{2} (k-l) \tau
        \end{align*}
        As $h - \tau \leq \tau (k-l) \leq h + \tau$ this gives the bound
        \begin{align*}
            &\log \rho_t^\tau(x) + \sum_{i=k}^{l-1} d \log (1 + \tau(1+\epsilon) X_{\tau(i+1)}) \\
            &\leq \log \rho_{t+h}^\tau(y) + \frac{1}{2 (h-\tau)} |x-y|^2 + \frac{A \tau}{h-\tau} |x-y| + \frac{A^2}{2} (h+\tau)
        \end{align*}

        It remains to estimate the last sum. We shall use the inequality, valid for $x \in [0,1)$, $\log(1-x) \geq -x - \frac{x^2}{(1-x)^2}$. We also notice that $X_s$ is bounded by some $m_0$ (depending only on $t_0,V,W$) uniformly on $[t_0,+\infty)$, so that we have 
        \[
            \frac{\tau^2 (1+\epsilon)^2 X_{\tau i}^2}{(1-\tau (1+\epsilon)^2 X_{\tau i})^2 } \leq \tau^2 C(t_0,\overline{\epsilon},V,W)
        \]
        for all $\tau < \tau(t_0,\epsilon)$. Hence we obtain
        \begin{align*}
            \sum_{i=k}^{l-1} d \log(1+ \tau(1+\epsilon) X_{\tau(i+1)}) &\geq -d \tau (1+\epsilon) \sum_{i=k+1}^l X_{\tau i} - \tau^2 (l-k) C \\
            &\geq - d \tau (1+\epsilon) \sum_{i=k+1}^l X_{\tau i} - \tau (h + \tau) C
        \end{align*}
        On the other hand, there exists $M(t_0,V,W)$ such that $X_s$ is $M$-Lipschitz on $[t_0,+\infty)$. This gives
        \begin{align*}
            -\tau \sum_{i=k+1}^l X_{\tau i} &= -\sum_{i=k+1}^l \int_{\tau(i-1)}^{\tau i} X_{\tau i} \dd{s} \geq -\sum_{i=k+1}^l \int_{\tau(i-1)}^{\tau i} X_s + M \tau  \dd{s} \\
            &= -\int_{\tau k}^{\tau l} X_s \dd{s} - M \tau^2 (l-k) \geq -\int_t^{t+h} X_s \dd{s} - m_0 \tau - M \tau (h + \tau)
        \end{align*}
        Combining these estimates, we get, for another constant $C(t_0,\overline{\epsilon},V,W)$ that
        \begin{align*}
            \sum_{i=k}^{l-1} d \log(1+ \tau(1+\epsilon) X_{\tau(i+1)}) &\geq -d \int_t^{t+h} X_s \dd{s} - C \tau (h + 1) \\
            &= -\frac{d}{2} \log \frac{e^{\Lambda (t+h)}-1}{e^{\Lambda t}-1} - C \tau (h+1)
        \end{align*}
        Plugging this into the previous inequality gives the final result, eventually for another constant. 
    \end{proof}
   
\subsection{$L^2_{t,\rm{loc}} H^2_x((0,T] \times \bb{T}^d)$-convergence}
    In the case of no potential of interaction, the uniform lower bound on the Hessian for positive time allows to show that the convergence of the flow is actually stronger that a weak convergence. The proof is based the following strong convergence in the case of regular initial data, proved by Santambrogio and Toshpulatov in \cite{L2H2}

    \begin{theorem}[$L^2_t H^2_x$-convergence for Fokker-Planck equation \cite{L2H2}]
        Let $\Omega$ be a uniformly convex bounded domain of $\bb{R}^d$, $V$ of class $C^2(\overline{\Omega})$ and $\rho_0$ of class $W^{1,p}(\Omega)$, $p > d$, bounded away from $0$ and $+\infty$. Let $(\rho_t^\tau)_{t \geq 0}$ be the piecewise constant interpolation for the JKO scheme associated to the Fokker-Planck equation. Then $\rho^\tau \to \rho$ strongly in $L^2 H^2([0,T] \times \Omega)$, unique solution to the Fokker-Planck equation starting from $\rho_0$.  
    \end{theorem}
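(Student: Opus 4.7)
The plan is to establish strong $L^2([0,T];H^2(\Omega))$ convergence by combining the known narrow convergence of the JKO scheme with a uniform $L^2_t H^2_x$ bound on the interpolation $\rho^\tau$ coming from a discrete analogue of the Fisher-information dissipation along the Fokker-Planck flow. The three main steps are uniform regularity propagation, a discrete Hessian estimate via flow interchange, and an upgrade from weak to strong convergence via convergence of norms.

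First I would propagate uniform regularity of $\rho_k^\tau$. The JKO Euler-Lagrange condition reads $\tau(\log \rho_{k+1}^\tau + V) + \psi_k = C_k$, where $\psi_k$ is the Kantorovich potential from $\rho_{k+1}^\tau$ to $\rho_k^\tau$. Taking gradients gives $\nabla \psi_k = -\tau(\nabla \rho_{k+1}^\tau/\rho_{k+1}^\tau + \nabla V)$, so uniform Lipschitz bounds on $\log \rho_k^\tau$ translate into uniform Lipschitz bounds on $\psi_k$. Uniform convexity of $\Omega$, together with the hypothesis that $\rho_0 \in W^{1,p}$ with $p > d$ is bounded away from $0$ and $\infty$, lets Caffarelli's boundary regularity theory apply uniformly to the associated Monge-Ampère equation, producing $C^{2,\alpha}$-bounds on $\psi_k$ and hence propagating uniform $W^{1,p}$-bounds and $L^\infty$ bounds away from $0$ and $\infty$ on the whole sequence $\rho_k^\tau$.

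Second, and this is the core step, I would derive a discrete Fisher-information dissipation estimate using the Matthes-McCann-Savaré flow-interchange technique. Formally, $\cl{I}[\rho] = \int |\nabla \log \rho|^2 \rho$ dissipates along the Fokker-Planck flow as $\partial_s \cl{I}[\rho_s] = -2\int |D^2 \log \rho_s|^2 \rho_s + \text{l.o.t.}(V)$ (De Bruijn/Bakry-Émery). Perturbing the one-step minimization by the heat semigroup applied to $\rho_{k+1}^\tau$, the minimality inequality combined with the geodesic convexity of $\cl{I}$ along the heat flow yields, after telescoping,
\begin{equation*}
\sum_{k=0}^{N} \tau \int_\Omega |D^2 \log \rho_{k+1}^\tau|^2\, \rho_{k+1}^\tau \;\le\; C(\cl{I}[\rho_0], V, \Omega, T).
\end{equation*}
Using the uniform bounds $c \le \rho_k^\tau \le C$ from step one, one converts this into a uniform bound $\|\rho^\tau\|_{L^2([0,T]; H^2(\Omega))} \le C$. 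Finally, to upgrade from weak $L^2 H^2$ compactness to strong convergence, I would combine this with the standard BV-in-time bound $\sum_k W_2(\rho_k^\tau,\rho_{k+1}^\tau)^2/\tau \le 2(\cl{F}[\rho_0]-\inf\cl{F})$ and the piecewise-constant Aubin-Lions lemma (Theorem \ref{thm: Aubin_Lions}) to obtain strong $L^2 H^{2-\delta}$ compactness, then identify the limit as the unique Fokker-Planck solution; convergence of the discrete dissipation sum to the continuous one delivers $\|\rho^\tau\|_{L^2 H^2} \to \|\rho\|_{L^2 H^2}$, which together with weak convergence gives strong $L^2 H^2$ convergence.

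The main obstacle is the flow-interchange step producing the summable Hessian bound: the non-geodesic nature of the heat flow on $\bb{W}_2$, the sign-handling of the $V$-dependent cross terms appearing under perturbation, and the need to keep the estimate uniform in $\tau$ all require careful bookkeeping. The uniform $L^\infty$-bounds established in the first step are what allow the $V$-cross terms to be absorbed as perturbations, so the chain of estimates is tightly coupled and must be closed simultaneously rather than sequentially.
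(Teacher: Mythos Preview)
This theorem is not proved in the present paper at all: it is quoted verbatim from \cite{L2H2} (Santambrogio--Toshpulatov) and used as a black box. The paper's contribution in this subsection is the \emph{next} theorem, the local-in-time $L^2_tH^2_x$ convergence on the torus, whose proof consists solely of verifying the three bullet-point hypotheses listed after the cited statement (finite energy, uniform Lipschitz and $L^\infty$ bounds at time $t_0$, and convergence of the Fisher information at $t_0$) using the Li--Yau--Hamilton machinery developed earlier. There is therefore no ``paper's own proof'' of the cited theorem to compare your proposal against.

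As to the substance of your sketch: the flow-interchange strategy you describe is indeed the method of \cite{L2H2}, so at the level of broad architecture you are on the right track. One point of caution: your phrase ``geodesic convexity of $\cl{I}$ along the heat flow'' is not quite right --- the Fisher information is not displacement convex, and the flow-interchange argument does not use convexity of $\cl{I}$. Rather, one perturbs $\rho_{k+1}^\tau$ by the auxiliary Fokker--Planck semigroup, uses minimality of $\rho_{k+1}^\tau$ in the JKO step together with the Evolution Variational Inequality for the auxiliary flow to control the Wasserstein term, and reads off the dissipation of the JKO functional $\cl F$ along the auxiliary flow; it is the \emph{second} time-derivative (the dissipation of the dissipation) that produces the $\int |D^2\log\rho|^2\rho$ term. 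Getting the signs and boundary terms right on a uniformly convex domain, and showing that the $V$-cross terms are genuinely lower order, is precisely the work carried out in \cite{L2H2}; your proposal identifies this as the main obstacle, which is accurate, but the sketch as written does not yet close it.
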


    A careful inspection of the proof shows that one can obtain the same convergence in \\ $L^2([t_0,T] ; H^2(\bb{T}^d))$, for $t_0 > 0$, still in the case $W = 0$, provided that :
    \begin{itemize}
        \item $\scr{F}[\rho_0] < +\infty$ (in order to have convergence of the JKO scheme). 
        \item $\rho_{t_0}^\tau$ satisfies is bounded in Lipschitz norm, and bounded away from $0$ and $+\infty$, uniformly in $\tau \ll 1$. 
        \item The Fisher's information $\int |\nabla \log \rho_{t_0}^\tau + V|^2 \diff \rho_{t_0}^\tau$ converges to the Fisher's information at time $t_0$ of the solution to the Fokker's Planck equation starting from $\rho_0$. 
    \end{itemize}

    Hence providing these points will give us the following strong local in time convergence of the flow:

    \begin{theorem}[$L^2_{t,loc} H^2_x((0,T) \times \bb{T}^d)$ convergence]
        Suppose $\rho_0$ is such that $\scr{F}[\rho_0] < +\infty$, let $(\rho_t^\tau)_{t \geq 0}$ piecewise constant interpolation for the JKO flow starting from $\rho_0$. Let $t_0 > 0$, then if $(\rho_t)_{t \geq 0}$ is the solution to the Fokker-Planck equation starting from $\rho_0$, one has $\rho^\tau \to \rho$ in $L^2_t H^2_x([t_0,T] \times \bb{T}^d)$
    \end{theorem}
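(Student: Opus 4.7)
The plan is to reduce the statement to the regular-initial-data result of Santambrogio--Toshpulatov stated above by re-initialising the JKO scheme at time $t_0 > 0$. Since the JKO iteration is Markovian, the shifted sequence $(\rho_k^\tau)_{k \geq k_\tau}$ with $k_\tau = \lceil t_0/\tau \rceil$ is itself a JKO flow starting from $\rho_{k_\tau \tau}^\tau$, so it suffices to verify, uniformly in $\tau \ll 1$, the three hypotheses listed immediately after the quoted theorem, with $t_0$ playing the role of the initial time.

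The first hypothesis, $\scr{F}[\rho_0] < +\infty$, is part of the assumption. The second hypothesis -- a uniform Lipschitz bound on $\rho_{t_0}^\tau$ together with uniform upper and strictly positive lower bounds -- is exactly the content of Proposition \ref{prop: universal_bounds}, whose proof relies on the asymptotic Li--Yau--Hamilton estimate combined with Lemmas \ref{lemma: gradient_estimate} and \ref{lemma: Log-Lip_to_L_infty}, and yields constants depending only on $t_0$ and $V$ once $\tau$ is sufficiently small. Passing to the uniform limit (guaranteed by the $L^p_{t,\loc} C^{0,\alpha}$-convergence of the previous subsection) the limit density $\rho_{t_0}$ inherits the same Lipschitz and two-sided bounds, which in particular ensures $\scr{F}[\rho_{t_0}] < +\infty$.

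The main obstacle is the third hypothesis, namely the convergence of Fisher's informations
\[
    \scr{I}[\rho_{t_0}^\tau] := \int_{\bb{T}^d} \left| \nabla \log \rho_{t_0}^\tau + \nabla V \right|^2 \diff \rho_{t_0}^\tau \xrightarrow[\tau \to 0]{} \scr{I}[\rho_{t_0}].
\]
To obtain it I would invoke the pointwise a.e.\ convergence $\nabla \rho_{t_0}^\tau \to \nabla \rho_{t_0}$ supplied by the $L^p_{t,\loc} C^{0,\alpha}$-convergence theorem (if this convergence only holds for a.e.\ $t_0 \in (0,T)$, one selects $t_0$ in that full-measure set and uses monotonicity of the $L^2([t_0,T]; H^2)$-norm in $t_0$ to extend the conclusion to every $t_0 > 0$). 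Combined with the uniform upper bound on $|\nabla \rho_{t_0}^\tau|_\infty$ and the uniform positive lower bound $\rho_{t_0}^\tau \geq c > 0$ from the previous step, the integrand
\[
    \left| \frac{\nabla \rho_{t_0}^\tau}{\rho_{t_0}^\tau} + \nabla V \right|^2 \rho_{t_0}^\tau
\]
is dominated uniformly in $\tau$ by an $L^\infty(\bb{T}^d)$ function and converges a.e.\ to its counterpart at the limit; dominated convergence then delivers the desired convergence of Fisher's information. Feeding these three ingredients into the adapted Santambrogio--Toshpulatov theorem concludes the proof.
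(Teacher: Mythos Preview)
Your proposal is correct and follows essentially the same architecture as the paper's proof: verify the three hypotheses needed to invoke the adapted Santambrogio--Toshpulatov theorem, with the uniform Lipschitz and two-sided bounds coming from Proposition~\ref{prop: universal_bounds}, and the Fisher-information convergence obtained via dominated convergence once a.e.\ convergence of gradients is in hand.

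The one point where the paper's argument differs slightly in execution is the justification of the a.e.\ gradient convergence at time $t_0$. You cite the ``Furthermore, $\nabla \rho_t^\tau \to \nabla \rho_t$ a.e.\ for all $t>0$'' clause of the $L^p_{t,\loc}C^{0,\alpha}$-convergence proposition, and hedge with an a.e.-in-$t_0$ workaround. The paper instead gives a direct argument that works for \emph{every} $t_0$: the functions $\log\rho_{t_0}^\tau + V$ are uniformly Lipschitz and uniformly $(-C)$-convex (again by the asymptotic Li--Yau--Hamilton estimate), hence converge uniformly to $\log\rho_{t_0}+V$ by Arzel\`a--Ascoli plus identification of the limit; uniform convergence of a sequence of uniformly semi-convex functions forces pointwise convergence of their (sub)gradients, which yields $\nabla(\log\rho_{t_0}^\tau + V)\to\nabla(\log\rho_{t_0}+V)$ a.e.\ along any subsequence. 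This is a nice second use of the Hessian lower bound and avoids your a.e.-$t_0$ detour, but the endgame (uniform domination, dominated convergence) is identical to yours.
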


    \begin{proof}
        The uniform Lipschitz, and away from zero and infinity bounds follows from the estimates \ref{prop: universal_bounds}. For the last point we have:
        \begin{enumerate}
            \item As $|\nabla (\log \rho_{t_0}^\tau + V)|$ is uniformly bounded, and $\rho_0^{t_0}$ is uniformly bounded away from $0$ and $+\infty$ for $\tau$ small enough, $\log \rho_{t_0}^\tau + V$ is converging uniformly in $\bb{T}^d$ up to subsequence. But since $\rho_{t_0}^\tau \to \rho_{t_0}$ a.e., we must have $\log \rho_{t_0}^\tau + V \to \log \rho_{t_0} + V$ uniformly. 
            \item We can find a constant (depending on $t_0$) such that for all $\tau$ small enough, we have $\log \rho_{t_0}^\tau + V$ is $-C$-convex. This implies that any sequence of sub-gradient for $\log \rho_{t_0}^\tau + V$ converges to a sub-gradient for $\log \rho_{t_0} + V$. Working with a countable subsequence, we obtain that $\nabla (\rho_{t_0}^{\tau_k} + V) \to \nabla (\rho_{t_0} + V)$ a.e. for any subsequence $(\tau_k)_{k \geq 0}$. 
            \item Hence we have $|\nabla (\log \rho_{t_0}^{\tau_k} + V)|^2 \rho_{t_0}^{\tau_k} \to |\nabla (\log \rho_{t_0} + V)|^2 \rho_{t_0}$ a.e. along any subsequence. Since those terms are bounded uniformly in $\tau$, we can apply the dominated convergence to deduce that
            \[
                \int |\nabla (\log \rho_{t_0}^{\tau_k} + V)|^2 \diff \rho_{t_0}^{\tau_k} \to  \int |\nabla (\log \rho_{t_0} + V)|^2 \diff \rho_{t_0}
            \]
            and the convergence then holds along
            $\tau \to 0$. 
        \end{enumerate}
    \end{proof}

\bibliography{reference}{}
\bibliographystyle{plain}
    
\appendix

\section{The One-Step estimate in the non-regular case}

    The goal of this section is to provide a proof of the one-step estimate in the non-regular case. In the case $\lambda_0 = +\infty$ there is nothing to prove, as the Kantorovich potential is $1$-concave, we have $D^2 \log \rho \succeq -1/\tau$. For the case $\lambda_0 < +\infty$, we start with the following observation : in the proof in the regular case, all computations after obtaining the inequality \ref{eq: Optimality_Equation} in the \emph{Maximum Principle} step can be done only assuming that $\eta$ satisfies $D^2 u[\eta] > -\lambda_0 > -\infty$, as in this case we have $\rho,\psi,\phi$ are of class $C^{2,\alpha}$ for all $\alpha < 1$, and all the computations only uses at most second order quantities. 

Hence to prove the result, it is sufficient to show the following:

\begin{proposition} \label{prop: sufficient_prop}
    Suppose $D^2 u[\eta] \geq -\lambda_0 > -\infty$. Let $\rho \in \Prox_\tau[\eta]$. Then keeping the notation of the proof of theorem \ref{thm: One-Step_Improvement}, there exists a maximum $(x,\nu)$ of $D^2 v(x)[\nu,\nu]$, such that denoting by $M$ the value of this maximum, one has at $x$
    \begin{equation}
        -\lambda_0 \leq M^2 u[\rho]_{\nu \nu}(\nabla v) + V_{\nu \nu} - M^2 V_{\nu \nu}(\nabla v) + W_{\nu \nu} * \eta - M^2 W_{\nu \nu} * \rho(\nabla v)
    \end{equation}
\end{proposition}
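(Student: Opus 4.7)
The plan is to replicate the smooth argument but with second-order derivatives of $v$ (and of $q = \log \eta$) replaced by symmetric second-order finite differences along a direction $\nu$, so that the only regularity actually used on $\eta$ is the $\lambda_0$-semi-convexity of $u[\eta]$. First, observe that $D^2 u[\eta] \succeq -\lambda_0 > -\infty$ implies that $\eta$ is Lipschitz and bounded away from $0$. Combined with the JKO regularity proposition and Caffarelli's theorem, this gives $\rho, \psi, \phi \in C^{2,\alpha}(\bb{T}^d)$ for every $\alpha < 1$, hence $v = \frac{1}{2}|x|^2 - \phi \in C^{2,\alpha}$, and the Monge--Amp\`ere equation $\log \det D^2 v = q - p \circ \nabla v$ with $p = \log \rho$ holds pointwise.

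For $h > 0$ small, consider $L_h(x,\nu) := v(x + h\nu) + v(x - h\nu) - 2 v(x)$ on $\bb{T}^d \times \bb{S}^d$. Since $v \in C^{2,\alpha}$, $L_h/h^2$ converges uniformly to $D^2 v(x)[\nu,\nu]$; let $(x_h,\nu_h)$ maximize $L_h$ and extract a subsequence with $(x_h,\nu_h) \to (x_*,\nu_*)$, which must maximize $D^2 v[\nu,\nu]$ with value $M$. The key observation is that the second-order optimality condition in $x$ gives $D^2 v(x_h + h\nu_h) + D^2 v(x_h - h\nu_h) \preceq 2 D^2 v(x_h)$ in the Loewner sense, so by concavity and Loewner monotonicity of $\log \det$ on positive definite matrices one gets $\delta_h^2 \log \det D^2 v(x_h,\nu_h) \leq 0$, where $\delta_h^2 f(x,\nu) := f(x + h\nu) + f(x - h\nu) - 2 f(x)$. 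This plays the role of the inequality obtained by differentiating the Monge--Amp\`ere equation twice in the smooth proof. Through the equation itself, it yields $\delta_h^2 q(x_h,\nu_h) \leq \delta_h^2 (p \circ \nabla v)(x_h,\nu_h)$.

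To expand the left-hand side, write $q = u[\eta] - V - W * \eta$: the $\lambda_0$-semi-convexity of $u[\eta]$ directly gives $\delta_h^2 u[\eta] \geq -\lambda_0 h^2$, while $V, W * \eta \in C^{2,1}$ yields $\delta_h^2 V(x_h,\nu_h)/h^2 = V_{\nu_h \nu_h}(x_h) + O(h)$ and similarly for $W * \eta$. For the right-hand side, the first-order optimality in $x$ gives $\nabla v(x_h + h\nu_h) + \nabla v(x_h - h\nu_h) - 2 \nabla v(x_h) = 0$, so the linear term in the Taylor expansion of $p$ around $\nabla v(x_h)$ cancels, and with $\Delta := \nabla v(x_h + h\nu_h) - \nabla v(x_h) = h D^2 v(x_h) \nu_h + O(h^{1+\alpha})$ one obtains $\delta_h^2(p \circ \nabla v)(x_h,\nu_h) = D^2 p(\nabla v(x_h))[\Delta,\Delta] + o(h^2)$. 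The first-order optimality in $\nu$ on the sphere, together with $L_h(x_h,\nu_h)/h^2 \to M$, forces $\nu_h$ to be an approximate eigenvector of $D^2 v(x_h)$ with eigenvalue $M$, so that $\delta_h^2 (p \circ \nabla v)(x_h,\nu_h)/h^2 \to M^2 p_{\nu_* \nu_*}(\nabla v(x_*))$. Passing to the limit $h \to 0$ and substituting $p = u[\rho] - V - W * \rho$ gives the desired inequality.

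The main obstacle lies in the asymmetry between the two sides: the bound on $\delta_h^2 q$ is only a one-sided lower bound coming from semi-convexity, whereas $\delta_h^2 (p \circ \nabla v)$ must be expanded to its genuine second-order behavior to extract the factor $M^2$, and this relies on the eigenvector property of $\nu_*$. Since $v$ is merely $C^{2,\alpha}$, the first-order condition in $\nu$ only yields the eigenvector relation up to an error of order $h^\alpha$, and some care is needed to confirm that this error is sharp enough for the quadratic form $D^2 p(\nabla v)[\Delta,\Delta]$ to still converge to $h^2 M^2 p_{\nu_* \nu_*}(\nabla v(x_*))$ in the limit.
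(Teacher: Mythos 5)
Your approach is essentially the same as the paper's: maximize the second-order finite difference $\Delta_{h,\nu} v(x) = v(x+h\nu)+v(x-h\nu)-2v(x)$, use concavity and Loewner monotonicity of $\log\det$ together with the second-order condition in $x$ to kill the Monge--Amp\`ere term, and pass to a subsequential limit $h\to 0$. The one place where you flag a potential difficulty --- that the eigenvector relation for $\nu_h$ is only known ``up to an error of order $h^\alpha$'' and that this might spoil the quadratic-form limit --- is actually a non-issue once you combine the two first-order optimality conditions carefully. The first-order condition in $x$ gives $\nabla v(x_h+h\nu_h)+\nabla v(x_h-h\nu_h)=2\nabla v(x_h)$, and the first-order condition in $\nu$ on the sphere gives, by Lagrange multipliers, that $\nabla v(x_h+h\nu_h)-\nabla v(x_h-h\nu_h)=2\alpha_h\nu_h$ for an \emph{exact} scalar $\alpha_h\ge 0$; adding and subtracting these yields the exact identity
\[
    \nabla v(x_h\pm h\nu_h)=\nabla v(x_h)\pm\alpha_h\nu_h.
\]
So $\Delta:=\nabla v(x_h+h\nu_h)-\nabla v(x_h)$ is exactly $\alpha_h\nu_h$, not merely $h D^2 v(x_h)\nu_h+O(h^{1+\alpha})$, and consequently $\Delta_{h,\nu_h}(f\circ\nabla v)(x_h)=\Delta_{\alpha_h,\nu_h}f(y_h)$ for any $f$, with $y_h=\nabla v(x_h)$: the compositional finite difference is itself a clean symmetric second-order finite difference in the image variable, at step $\alpha_h$. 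There is therefore no Taylor expansion of $\nabla v$ to perform at all. The only place where you need the $C^{2,\alpha}$ regularity of $v$ is to conclude $\alpha_h/h\to M$ along the subsequence, which follows from $\delta_{h,\nu_h}\nabla v(x_h)/(2h)\to D^2 v(x_*)\nu_*$; and then $\Delta_{\alpha_h,\nu_h}f(y_h)/h^2 = (\alpha_h/h)^2\cdot\alpha_h^{-2}\Delta_{\alpha_h,\nu_h}f(y_h)\to M^2 f_{\nu_*\nu_*}(y_*)$ for each $C^{2,\alpha}$ function $f$ among $u[\rho]$, $V$, $W*\rho$. Once you notice this, the proof closes without the caveat in your last paragraph.
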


We shall rely on a combination of finite-differences to circumvent the lack of regularity, combined with a limiting argument, to obtain that one can find an inequality at a maximum point for the eigenvalues of the Hessian involving only quantities of order $1$ and $2$ without using two derivatives. 

The use of finite-differences approximation to use maximum principle when one cannot use it directly (either because of a lack of regularity, or because the maximum does not exist) can be tracked back to Caffarelli in his proof of his famous Caffarelli's contraction theorem \cite{CaffContract}. It has be then widely used in similar contexts: for instance, in \cite{MomentMeasure} for moment measures, and in \cite{CarlierSantamFig} to prove a modified version of the contraction theorem. 

\subsection{Finite Differences and Approximation} 

    For a function $f$, possibly vector valued, and for $\nu \in \bb{S}^d$, $h > 0$ we define the first and second order finite differences
    \begin{align}
        &\delta_{h,\nu} f(x) := f(x + h \nu) - f(x - h \nu) \\
        &\Delta_{h,\nu} f(x) = f(x + h \nu) + f(x-h \nu) - 2 f(x)
    \end{align}
    those are approximation of the classical first and second order derivative of $f$ in direction $\nu$. More precisely, we have the following
    
    \begin{lemma} \label{lemma: finite_diff_approx}
        Suppose $f$ is of class $C^{2,\alpha}(\bb{T}^d)$, then 
        \begin{align*}
            &\frac{1}{2 h} \delta_{h,\nu} f(x) \to \nabla f_\nu(x) \\
            &\frac{1}{h^2} \Delta_{h,\nu} f(x) \to D^2 f(x)[\nu,\nu] 
        \end{align*}
        uniformly in $(x,\nu) \in \bb{T}^d \times \bb{S}^d$. \\
        Furthermore, if $(x_h,\nu_h)$ is a minimizer of $\Delta_{h,\nu} f(x)$ for $h > 0$ (which exists by continuity), then one can find a subsequence $(x_k,\nu_k,h_k)$, with $h_k \to 0$ converging to $(x_0,\nu_0)$ minimizers of $D^2 f(x)[\nu,\nu]$ as $k \to +\infty$. 
    \end{lemma}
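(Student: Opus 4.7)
The plan is to combine Taylor's theorem with integral remainder, applied to the one-dimensional slices $s\mapsto f(x+s\nu)$, with a standard compactness argument on $\bb{T}^d\times\bb{S}^d$. For the first part, the hypothesis $f \in C^{2,\alpha}(\bb{T}^d)$ gives a finite Hölder seminorm $[D^2 f]_\alpha$ by compactness of the torus, and a quantitative Taylor remainder estimate is what produces the \emph{uniform} convergence on the compact product $\bb{T}^d\times\bb{S}^d$.

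For the first part, I would write, for every $(x,\nu)\in\bb{T}^d\times\bb{S}^d$ and $h>0$,
$f(x\pm h\nu) = f(x) \pm h\,\nabla f(x)\cdot \nu + \tfrac{h^2}{2}D^2 f(x)[\nu,\nu] + R_\pm(x,\nu,h),$
with remainder
$R_\pm(x,\nu,h) = \int_0^h (h-s)\bigl(D^2 f(x\pm s\nu)[\nu,\nu] - D^2 f(x)[\nu,\nu]\bigr)\diff s.$
The $C^{2,\alpha}$ hypothesis then gives $|D^2 f(x\pm s\nu)-D^2 f(x)| \leq [D^2 f]_\alpha s^\alpha$, so $|R_\pm(x,\nu,h)| \leq C h^{2+\alpha}$ with $C$ independent of $(x,\nu)$. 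Subtracting the two expansions yields $\delta_{h,\nu}f(x) = 2h\,\nabla f_\nu(x) + O(h^{2+\alpha})$ and summing yields $\Delta_{h,\nu}f(x) = h^2 D^2 f(x)[\nu,\nu] + O(h^{2+\alpha})$, both uniformly in $(x,\nu)$. Dividing by $2h$ and $h^2$ respectively delivers the two claimed uniform convergences.

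For the second part, set $g_h(x,\nu):=\tfrac{1}{h^2}\Delta_{h,\nu}f(x)$ and $g_0(x,\nu):=D^2 f(x)[\nu,\nu]$. The continuity of $g_h$ together with the compactness of $\bb{T}^d\times\bb{S}^d$ ensures the minimum is attained by some $(x_h,\nu_h)$. Given any sequence $h_k\to 0$, compactness allows extracting a subsequence (still denoted by $h_k$) along which $(x_{h_k},\nu_{h_k})\to(x_0,\nu_0)$. For any fixed $(x,\nu)$, the minimality inequality $g_{h_k}(x_{h_k},\nu_{h_k})\leq g_{h_k}(x,\nu)$ passes to the limit using the uniform convergence $g_{h_k}\to g_0$ combined with the continuity of $g_0$, yielding $g_0(x_0,\nu_0)\leq g_0(x,\nu)$. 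As $(x,\nu)$ was arbitrary, $(x_0,\nu_0)$ is a global minimizer of $D^2 f(\cdot)[\cdot,\cdot]$.

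The argument is essentially routine; the only subtle point is the uniform quantitative bound on the Taylor remainder. This forces the use of the integral form of the remainder together with the Hölder control on $D^2 f$, rather than Peano's form, which would only yield pointwise little-$o$ statements and therefore be insufficient to pass to the limit uniformly in $(x,\nu)$ on the compact product $\bb{T}^d\times\bb{S}^d$.
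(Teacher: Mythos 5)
Your proof is correct and follows essentially the same route as the paper: a quantitative remainder estimate (you use Taylor with integral remainder, the paper invokes the mean value theorem on the one-dimensional slices) gives the uniform convergence, and the second part is the standard compactness-plus-passing-to-the-limit-in-the-minimality-inequality argument in both cases. The only difference is cosmetic, and your final remark is slightly overstated — $C^2$ regularity alone already yields uniform convergence on the compact product via uniform continuity of $D^2 f$, so the Hölder exponent only provides a rate, not the uniformity itself.
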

    
    \begin{proof}
        Fix $h > 0$, $x \in \bb{T}^d$ and $\nu \in \bb{S}^d$. Since $u$ is of class $C^{2,\alpha}$, one can find $\xi,\eta \in [-h,h]$ such that $(2 h)^{-1} \delta_{h,\nu} f(x) = \nabla f_\nu(x + \xi \nu) = D^2 f(x + \xi \nu) \cdot \nu$ and $h^{-2} \Delta_{h,\nu} f(x) = D^2 f(x + \eta \nu)[\nu,\nu]$. But then we can bound
        \begin{align*}
            &|(2 h)^{-1} \delta_{h,\nu} f(x) - \nabla f_\nu(x)| \leq ||D^2 f(x + \xi \nu) - D^2 f(x)|| \leq C |\xi|^\alpha \leq C h^\alpha \\
            &|h^{-2} \Delta_{h,\nu} f(x) - D^2 f(x)[\nu,\nu]| \leq ||D^2 f(x + \eta \nu) - D^2 f(x)|| \leq C |\eta|^\alpha \leq C h^\alpha
        \end{align*}
        which concludes the uniform convergence. 
    
        Now using this uniform convergence, it is easy to see that $h^{-2} \min_{x,\nu} \Delta_{h,\nu} f(x) \to \min_{x,\nu} D^2 f(x)[\nu,\nu]$. Since $\bb{T}^d \times \bb{S}^d$ is compact, to any sequence of minimizers $(x_h,\nu_h)$ we can extract a converging subsequence along $h_k \to 0$, and passing to the limit in the above inequality we deduce that the limit point if a minimizer. 
    \end{proof}

\subsection{An Inequality at a Maximum Point}

    From now on, we fix a probability measure $\eta$ on the torus, with $D^2 u[\eta] \geq -\lambda_0$ in the weak sense. This implies in particular that $\eta \in C^{0,1}(\bb{T}^d)$ and $\eta > 0$, and we let $\rho \in \Prox_\tau[\eta]$. We also consider $(\psi,\phi)$ pair of Kantorovich potential from $\rho$ to $\eta$. Since $\rho > 0$ and $\tau u[\rho] = -\psi$ is of class $C^{0,1}(\bb{T}^d)$, we get that $\rho$ is of class $C^{0,1}(\bb{T}^d)$ positive, hence the Kantorovich potentials are of class $C^{2,\alpha}(\bb{T}^d)$ for all $\alpha < 1$ and satisfies $D^2 \phi, D^2 \psi \preceq 1-\epsilon$ for some $\epsilon > 0$.

    We let $v := |x|^2 / 2 - \phi$. By the above considerations, we see that the Monge-Ampere equation holds everywhere, that is,
    \[
        \log \det D^2 v + \log \rho(\nabla v) = \log \eta
    \]

    \begin{proposition} \label{prop: ineq_finite_diff_opt}
        Let $(x_h,\nu_h)$ minimizing $\Delta_{h,\nu} \phi(x)$, there exists $\alpha_h \leq 0$ such that $\delta_{h,\nu_h} \nabla \phi(x_h) = (h-\alpha_h )\nu_h$, and if $y_h = \nabla v(x_h)$, one has
        \begin{equation} \label{eq: ineq_finite_diff}
            -h^2 \lambda_0 \leq \Delta_{\alpha_h,\nu_h} u[\rho](y_h) + \Delta_{h,\nu_h} V(x_h) - \Delta_{\alpha_h,\nu_h} V(y_h) + \Delta_{h,\nu_h} W * \eta(x_h) - \Delta_{\alpha_h,\nu_h} W * \rho(y_h)
        \end{equation}
    \end{proposition}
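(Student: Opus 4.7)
The strategy is to mimic, at the level of finite differences, the smooth-case argument of Theorem~\ref{thm: One-Step_Improvement}: the quantity $D^2 v[\nu,\nu]$ is replaced by $h^{-2}\Delta_{h,\nu}v(x) = 1 - h^{-2}\Delta_{h,\nu}\phi(x)$, whose maximum over $(x,\nu) \in \bb{T}^d \times \bb{S}^{d-1}$ is achieved at some $(x_h,\nu_h)$ by compactness and the $C^{2,\alpha}$-regularity of $\phi$.

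First I would extract optimality information from $(x_h, \nu_h)$: differentiating $\Delta_{h,\nu}\phi(x)$ in $x$ at $x_h$ gives $\nabla \phi(x_h + h\nu_h) + \nabla \phi(x_h - h\nu_h) = 2\nabla \phi(x_h)$, which via $\nabla v = \id - \nabla \phi$ translates to $y^+ + y^- = 2 y_h$; while tangential differentiation in $\nu$ on the sphere gives $\delta_{h,\nu_h}\nabla \phi(x_h) \parallel \nu_h$, defining the scalar $\alpha_h$ appearing in the statement. Combined, these two identities place $y^+$ and $y^-$ on a line through $y_h$ in the direction $\nu_h$, so that $\Delta_{\alpha_h,\nu_h}$ evaluated at $y_h$ produces, up to the appropriate identification of step size, the finite difference of $\rho$-side quantities between $y^-, y_h, y^+$. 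The sign of $\alpha_h$ follows from the $c$-concavity bound $D^2 \phi \preceq (1-\epsilon)\,\mathrm{I}$ together with the periodicity-enforced bound $\max \Delta_{h,\nu}v \geq h^2$.

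Next I would use the pointwise Monge--Amp\`ere equation $\log \eta(x) = \log \rho(\nabla v(x)) + \log \det D^2 v(x)$, which holds on all of $\bb{T}^d$ in this regularity. Applying $\Delta_{h,\nu_h}$ at $x_h$ gives the decomposition
\begin{equation*}
    \Delta_{h,\nu_h} \log \eta(x_h) = \Delta_{\alpha_h,\nu_h} \log \rho(y_h) + \Delta_{h,\nu_h} \log \det D^2 v(x_h).
\end{equation*}
Expanding $\log \rho = u[\rho] - V - W * \rho$ and $\log \eta = u[\eta] - V - W * \eta$ and rearranging, the target inequality of the proposition reduces to
\begin{equation*}
    \Delta_{h,\nu_h} \log \det D^2 v(x_h) \le \Delta_{h,\nu_h} u[\eta](x_h) + h^2 \lambda_0.
\end{equation*}
The right-hand side is nonnegative by the semi-convexity hypothesis $D^2 u[\eta] \succeq -\lambda_0$ translated into a finite-difference bound, so it is sufficient to show the left-hand side is nonpositive.

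The main obstacle, replacing the smooth-case differentiation of Monge--Amp\`ere, is exactly this last inequality $\Delta_{h,\nu_h} \log \det D^2 v(x_h) \leq 0$. The key observation is that second-order finite differences in different directions commute: for any $\zeta \in \bb{S}^{d-1}$ and $h' > 0$ one has $\Delta_{h',\zeta}[\Delta_{h,\nu_h} v](x_h) = \Delta_{h,\nu_h}[\Delta_{h',\zeta} v](x_h)$. Since $x_h$ is an $x$-maximum of $\Delta_{h,\nu_h}v$ (for the fixed $\nu_h$), the left-hand side is $\leq 0$; dividing by $(h')^2$ and letting $h' \to 0$ using $v \in C^{2,\alpha}$, one obtains the matrix inequality $\tfrac12(D^2 v(x^+) + D^2 v(x^-)) \preceq D^2 v(x_h)$ on symmetric positive-definite matrices. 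Combining this with the concavity of $\log \det$ on the positive-definite cone yields $\log \det D^2 v(x^+) + \log \det D^2 v(x^-) \leq 2 \log \det D^2 v(x_h)$, which is exactly the required finite-difference inequality and closes the proof.
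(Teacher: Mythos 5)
Your proposal is correct and follows essentially the same route as the paper: apply $\Delta_{h,\nu_h}$ to the pointwise Monge--Amp\`ere equation, use the first-order optimality conditions in $x$ and $\nu$ to identify the image points $\nabla v(x_h\pm h\nu_h)$ as $y_h\pm\alpha_h\nu_h$, rearrange via $\log\rho=u[\rho]-V-W*\rho$ and $\log\eta=u[\eta]-V-W*\eta$, and close by lower-bounding $\Delta_{h,\nu_h}u[\eta]$ by $-h^2\lambda_0$ and upper-bounding $\Delta_{h,\nu_h}\log\det D^2v$ by $0$. The only cosmetic differences are (i) the detour through commutativity of finite differences and the $h'\to 0$ limit is unnecessary, since $v\in C^{2,\alpha}$ already gives $D^2\Delta_{h,\nu_h}v(x_h)=\Delta_{h,\nu_h}D^2v(x_h)\preceq 0$ directly from the second-order condition at a maximum, and (ii) you invoke the Jensen form of concavity of $\log\det$ plus its monotonicity on the SPD cone (the latter is used but not stated), whereas the paper uses the equivalent gradient-inequality form $\Delta J\le\Tr\,[(D^2v)^{-1}\Delta D^2v]$ followed by positivity of the trace pairing; both are standard and interchangeable.
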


    \begin{proof}
        
        We proceed into several steps. We shall drop the dependency on $h$ of $x_h,\nu_h,\alpha_h$. First note that since $\Delta_{h,\nu} |\cdot|^2 = 2 h^2$ minimizing $\Delta_{h,\nu} \phi(x)$ is the same as maximizing $\Delta_{h,\nu} v(x)$. 

        \begin{itemize}
        
            \item \emph{Maximum Principle : } By optimality at $x$ we obtain
            \[
                D^2 \Delta_{h,\nu} v(x) \preceq 0 \quad \nabla v(x + h \nu) + \nabla v(x-h \nu) = 2 \nabla v(x)
            \]
            and by optimality in $\nu$ we obtain that there exists $\beta$ real such that 
            \[
                \delta_{h,\nu} \nabla v(x) = \nabla v(x + h \nu) - \nabla v(x-h \nu) = 2 \alpha \nu 
            \]
            Since $\nabla v = x - \nabla \phi$ we also obtain $\delta_{h,\nu} \nabla \phi = 2 (h-\alpha) \nu$. Note furthermore that, by convexity, $t \to v_\nu(x + t \nu) - v_\nu(x-t \nu)$ is non decreasing, hence $\alpha \geq 0$.  
            Finally, combining the first order condition in $x$ and $\nu$ we get
            \[
                \nabla v(x \pm h\nu) = \nabla v(x) \pm \alpha \nu
            \]

            \item \emph{Inequality by Monge-Ampere equation : } We rewrite the Monge-Ampere equation as
            \[
                J[D^2 v] + u[\rho](\nabla v) + V - V(\nabla v) + W * \eta - W * \rho(\nabla v) = u[\eta]
            \]
            with $J[B] = \log \det B$. By concavity of the log-determinant we observe that
            \[
                \Delta_{h,\nu} J[D^2 v] \leq \Tr [D^2 v]^{-1} D^2 \Delta_{h,\nu} v 
            \]
            in particular, by second order optimality condition this term is negative at $x$. On the other hand, since $u[\eta]$ is $-\lambda_0$-convex, one can derive 
            \[
                \Delta_{h,\nu} u[\eta] \geq -h^2 \lambda_0
            \]
            Hence we obtain
            \[
                -h^2 \lambda_0 \leq \Delta_{h,\nu} u[\rho] \circ \nabla v + R
            \]
            with $R_h$ being the operator $\Delta_{h,\nu}$ applied to the remaining functions. More precisely
            \[
                R = \Delta_{h,\nu} V - \Delta_{h,\nu} V \circ \nabla v + \Delta_{h,\nu} W * \eta - \Delta_{h,\nu} W * \rho \circ \nabla v
            \]

            \item \emph{Rewriting the Rest : } Using that $\nabla v(x \pm h \nu) = \nabla v(x) \pm \alpha \nu$, if we let $y := \nabla v(x)$ we get
            \[
                R = \Delta_{h,\nu} V(x) - \Delta_{\alpha,\nu} V(y) + \Delta_{h,\nu} W * \eta(x) - \Delta_{\alpha,\nu} W * \rho(y) 
            \]
            which is exactly the remainder that we have above. 

            \item \emph{The $u[\rho]$ term : } Again, using that $\nabla v(x \pm h \nu) = \nabla v(x) \pm \alpha \nu$ we obtain $\Delta_{h,\nu} u[\rho] \circ \nabla v = \Delta_{\alpha,\nu} u[\rho](x)$. 
        \end{itemize}
        Combining all of this gives the inequality.
    \end{proof}

\subsection{Proof of proposition \ref{prop: sufficient_prop}} 

    Using the previous proposition, we will obtain proposition \ref{prop: sufficient_prop} by passing to the limit along a converging subsequence. 

    \begin{proof}[Proof of proposition \ref{prop: sufficient_prop}]
        Let's consider $(x_h,y_h,\nu_h,\alpha_h)$ as given by proposition \ref{prop: ineq_finite_diff_opt}. Since $\phi$ is of class $C^{2,\alpha}$, we can take a subsequence such that $x_k,\nu_k$ converging to a point $(x,\nu)$ minimizing $D^2 \phi(x)[\nu,\nu]$. Or equivalently, $(x,\nu)$ maximizes $D^2 v(x)[\nu,\nu]$. From now on, all limit are taken along this particular subsequence. 

        We then have the following convergences
        \begin{itemize}
            \item By continuity of $\nabla v$, $y_h = \nabla v(x_h) \to \nabla v(x) =: y$. 
            \item By $C^{2,\alpha}$-continuity of $\phi$ we have $h^{-1} \delta_{h,\nu_h} \nabla \phi(x_h) \to \nabla \phi_\nu(x)$. 
            \item As $h^{-1} \delta_{h,\nu_h} \nabla \phi(x_h) = (1 - h^{-1} \alpha_h) \nu_h$ we must have $\nabla \phi_\nu(x) = \phi_{\nu \nu}(x) \nu$, and $h^{-1} \alpha_h \to 1 - \phi_{\nu \nu}(x) = v_{\nu \nu}(x) = M$. Furthermore $\alpha_h \to 0$. 
        \end{itemize}

        We then divides the inequality \ref{eq: ineq_finite_diff} by $h^2$ which gives
        \begin{align*}
            -\lambda_0 &\leq h^{-2} \Delta_{\alpha_h,\nu_h} u[\rho](y_h) \\
            &+ h^{-2} \Delta_{h,\nu_h} V(x_h) - h^{-2} \Delta_{\alpha_h,\nu_h} V(y_h) + h^{-2} \Delta_{h,\nu_h} W * \eta(x_h) - h^{-2} \Delta_{\alpha_h} W * \rho(y_h)
        \end{align*}

        First, as $V,W$ are $C^{2,1}(\bb{T}^d)$, we have, by lemma \ref{lemma: finite_diff_approx}, 
        \[ 
            h^{-2} \Delta_{h,\nu_h} V(x_h) \to V_{\nu \nu}(x) \qquad h^{-2} \Delta_{h,\nu_h} W * \eta(x_h) \to W_{\nu \nu} * \eta(x)
        \]
        On the other hand, since $u[\rho]$ is $C^{2,\alpha}$ one has 
        \[ 
            h^{-2} \Delta_{\alpha_h,\nu_h} u[\rho](y_h) = \frac{\alpha_h^2}{h^2} \frac{1}{\alpha_h^2} \Delta_{\alpha_h,\nu_h} u[\rho](y_h) \to M^2 u[\rho]_{\nu \nu}(y)
        \]
        similarly 
        \[
            h^{-2} \Delta_{\alpha_h,\nu_h} V(y_h) \to M^2 V_{\nu \nu}(y) \qquad h^{-2} \Delta_{\alpha_h,\nu_h} W * \rho(y_h) \to M^2 W_{\nu \nu} * \rho(y)
        \]
        Hence we obtain at the point $x$, in direction $\nu$, maximum of $D^2 v(x)[\nu,\nu]$, that
        \[
            \lambda_0 \leq M^2 u[\rho]_{\nu \nu}(y) + V_{\nu \nu}(x) - M^2 V_{\nu \nu}(y) + W_{\nu \nu} * \eta(x) - M^2 W_{\nu \nu} * \rho(y)
        \]
        which concludes since $y = \nabla v(x)$. 
    \end{proof}

\section{Asymptotic for Comparison Sequence}

    Here we give the proofs of proposition \ref{prop: Asymptotic_Estimate_Heat} and \ref{prop: Asymptotic_Estimate_Fokker_Planck_Granular_Medium}. We shall first give a proof of the first one \ref{prop: Asymptotic_Estimate_Heat}, then we shall show some preliminary results about the comparison sequence which will be used in the final part, where we prove the second proposition \ref{prop: Asymptotic_Estimate_Fokker_Planck_Granular_Medium}. Let's recall the value of $G$ for the sake of readability:
\[
    G[E,\tau] = \frac{E}{(1-E)^2}(1-\tau (2 \lambda^* + L^*) + \tau (\lambda^* + L^*) E)
\]

\subsection{Proof of proposition \ref{prop: Asymptotic_Estimate_Heat}: The Heat Case}

    Let us first consider the case $\lambda_0 = +\infty$. Note that in the Heat case one has 
    \[
        G[E,\tau] = G[E] = \frac{E}{(1-E)^2}
    \]
    
    \begin{lemma} \label{lemma: Asymptotic_Estimate_Heat_Infinite}
        If $\lambda_0 = +\infty$, then $E_k^\tau$ does not depends on $\tau$, and one has $E_k \sim \frac{1}{2k}$ as $k \to +\infty$.
    \end{lemma}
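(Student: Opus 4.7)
The plan is to exploit the key simplification that, in the heat case where $V=W=0$, we have $\Lambda = 0$, $\lambda^* = L^* = 0$, so $G[E,\tau] = G[E] = E/(1-E)^2$ is independent of $\tau$. The defining recursion $G[E_{k+1}^\tau,\tau] = E_k^\tau$ therefore defines a $\tau$-independent sequence, which I will simply write $(E_k)_{k \geq 1}$. The initial condition $E_0 = \tau \lambda_0 = +\infty$ is interpreted through the extension $G[1] = +\infty$, giving $E_1 = 1$. The sequence is then determined by the monotone inverse of $G : [0,1) \to [0,+\infty)$.

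Next I would establish that $E_k \downarrow 0$. Since $G[E]/E = (1-E)^{-2} > 1$ for $E \in (0,1)$, the identity $E_k = G[E_{k+1}]$ gives $E_k > E_{k+1}$, so $(E_k)_{k \geq 1}$ is strictly decreasing and bounded below by $0$, hence converges to some $E^\infty \in [0,1)$. Passing to the limit in the recursion yields $E^\infty = E^\infty/(1-E^\infty)^2$, whose only solution in $[0,1)$ is $E^\infty = 0$.

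The heart of the proof is a change of variable that linearises the recursion asymptotically. Set $y_k := 1/E_k$, so $y_k \to +\infty$. The relation $E_k = E_{k+1}/(1-E_{k+1})^2$ rewrites as
\begin{equation}
    y_k \;=\; y_{k+1}\bigl(1 - 1/y_{k+1}\bigr)^2 \;=\; y_{k+1} - 2 + \frac{1}{y_{k+1}},
\end{equation}
that is, $y_{k+1} - y_k = 2 - 1/y_{k+1}$. Since $y_{k+1} \to +\infty$, the increments $y_{k+1}-y_k$ converge to $2$. A Cesaro averaging argument (or direct telescoping $y_n - y_1 = \sum_{k=1}^{n-1}(y_{k+1}-y_k)$ and dividing by $n$) then yields $y_n/n \to 2$, equivalently $E_n \sim 1/(2n)$ as $n \to +\infty$.

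No step presents a genuine obstacle: the whole argument is driven by the substitution $y_k = 1/E_k$, which is the standard trick for recursions of the form $x_{k+1} = x_k - c\, x_k^2 + o(x_k^2)$. The only mildly delicate point is justifying that the boundary case $E_1 = 1$ fits consistently into the recursion (via the extension of $G$ at $E = 1$), but this is precisely the convention built into Definition \ref{def: Comparison_Sequence}.
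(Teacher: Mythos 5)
Your proof is correct and follows essentially the same route as the paper's: $\tau$-independence from the fact that $G$ does not depend on $\tau$ in the heat case, $E_k \to 0$ by monotonicity and the fixed-point argument, and then the key computation $\frac{1}{E_{k+1}} - \frac{1}{E_k} = 2 - E_{k+1} \to 2$ followed by Cesàro/telescoping to conclude $1/E_k \sim 2k$. The only cosmetic difference is that you package the reciprocal as an explicit change of variable $y_k = 1/E_k$, while the paper manipulates $1/E_k$ directly; the underlying argument is identical.
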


    \begin{proof}
        The non-dependency on $\tau$ is trivial as the initial data is $+\infty$ and $G$ does not depend on $\tau$. By a fixed point and monotonicity argument, we easily see that $E_k \to 0$ as $k \to +\infty$. On the other hand we observe that
        \[
            \frac{1}{E_{k+1}} - \frac{1}{E_k} = \frac{1 - (1-E_{k+1})^2}{E_{k+1}} = 2 - E_{k+1} \underset{{k \to +\infty}}{\to} 2
        \]
        using Cesaro's lemma we obtain that
        \[
            \frac{1}{E_k} - \frac{1}{E_0} = \sum_{i=0}^{k-1} \frac{1}{E_{k+1}} - \frac{1}{E_k} \sim 2 k
        \]
        which shows that $E_k \sim \frac{1}{2k}$. 
    \end{proof}

    For the $\lambda_0 < +\infty$ case, we recall that we want to prove:

    \begin{lemma} \label{lemma: Asymptotic_Estimate_Heat_Finite}
        Suppose $\tau \lambda_0 \leq 1$, then for all $k \geq 0$ we have
        \[
            \frac{1}{\tau} E_k^\tau \leq \frac{\lambda_0}{\tau k \lambda_0 (2 - \tau \lambda_0) + 1}
        \]
    \end{lemma}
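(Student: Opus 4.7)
The plan is a direct induction on $k$ that compares $1/E_k^\tau$ to $1/F_k$, where $F_k := \frac{\tau\lambda_0}{\tau k\lambda_0(2-\tau\lambda_0)+1}$ is the candidate upper bound. In the heat case $G[E,\tau]=E/(1-E)^2$ does not depend on $\tau$, so the recurrence $G[E_{k+1}^\tau]=E_k^\tau$ rearranges cleanly in the variable $1/E$: multiplying by $(1-E_{k+1}^\tau)^2/E_{k+1}^\tau$ gives the crucial identity $\frac{1}{E_{k+1}^\tau}-\frac{1}{E_k^\tau}=2-E_{k+1}^\tau$. This is the engine of the whole argument.

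First I would observe that $E_k^\tau$ is decreasing. Since $G[E]\geq E$ on $[0,1)$ (because $(1-E)^2\leq 1$), and $G$ is increasing, the relation $G[E_{k+1}^\tau]=E_k^\tau$ forces $E_{k+1}^\tau\leq E_k^\tau$; iterating, $E_k^\tau\leq E_0^\tau=\tau\lambda_0$ for every $k\geq 0$. A direct computation shows $F_k$ satisfies the analogous identity $\frac{1}{F_{k+1}}-\frac{1}{F_k}=2-\tau\lambda_0$, with equality at $k=0$: $F_0=\tau\lambda_0=E_0^\tau$.

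Subtracting the two telescoping identities yields
\[
    \frac{1}{E_{k+1}^\tau}-\frac{1}{F_{k+1}}=\left(\frac{1}{E_k^\tau}-\frac{1}{F_k}\right)+(\tau\lambda_0-E_{k+1}^\tau).
\]
Since $E_{k+1}^\tau\leq\tau\lambda_0$ by the first step, the last term is non-negative, so by induction starting from equality at $k=0$ we obtain $1/E_k^\tau\geq 1/F_k$, i.e. $E_k^\tau\leq F_k$, for all $k\geq 0$. Dividing by $\tau$ gives the claimed bound. There is essentially no obstacle: the only input needed beyond the defining recurrence is the monotonicity of the comparison sequence, which is immediate from $G[E]\geq E$. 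The assumption $\tau\lambda_0\leq 1$ is only used to ensure $E_0^\tau\in[0,1]$ so that the sequence is well defined in the first place.
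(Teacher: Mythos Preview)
Your proof is correct and takes a genuinely different, more elementary route than the paper. The paper argues via a discrete maximum principle: it introduces auxiliary functions $f(z)=z-\sqrt{z(z-1)}$ and $g(z)=f(z)/z$, fixes a parameter $K\geq 1$, and shows by a case analysis (whether the maximum of $(k+K)E_k^\tau$ over a finite horizon $\{0,\ldots,k_0\}$ is attained at $k^*=0$ or at an interior point) that $E_k^\tau\leq \max(\tau K\lambda_0,\,f(K))/(k+K)$; the stated bound then drops out after choosing $K=g^{-1}(\tau\lambda_0)=1/(\tau\lambda_0(2-\tau\lambda_0))$ so that the two terms in the max coincide.

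Your approach bypasses all of this by working directly with the reciprocal $1/E_k^\tau$. The identity $\tfrac{1}{E_{k+1}^\tau}-\tfrac{1}{E_k^\tau}=2-E_{k+1}^\tau$ (which the paper itself uses, but only in the $\lambda_0=+\infty$ case to get the asymptotic $E_k\sim 1/(2k)$) together with the monotonicity $E_{k+1}^\tau\leq E_0^\tau=\tau\lambda_0$ immediately gives a one-line induction against the explicit comparison $F_k$. This is both shorter and avoids the auxiliary optimization in $K$; in fact it makes transparent why the constant $2-\tau\lambda_0$ appears in the denominator. The paper's method, on the other hand, is closer in spirit to Lee's original argument and might generalize more readily to situations where the clean reciprocal identity is unavailable (e.g.\ when $G$ depends nontrivially on $\tau$, as in the $V,W\neq 0$ case treated later).
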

    
    we shall make use of the following functions: we define for $z \geq 1$, $f(z) := z - \sqrt{z(z-1)}$, and $g(z) = z^{-1} f(z)$. One observe that $f$ is decreasing, valued in $(1/2,1]$, and that $g$ is a decreasing diffeomorphism from $[1,+\infty)$ to $(0,1]$ with inverse $g^{-1}(z) = \frac{1}{z(2-z)}$. We shall rely on the following lemma:

    \begin{proof}
        Let $K \geq 1$, then for all $k \geq 0$ we have
        \begin{equation}
            E_k^\tau \leq \frac{\max(\tau K \lambda_0, f(K)}{k + K}
        \end{equation}
    \end{proof}

    \begin{proof}
        Consider a step horizon $k_0 \geq 1$, and a time step $k^* \in \{0,\ldots,k_0\}$ such that $(k+K) E_k^\tau$ is maximal. We distinguish two cases:
        \begin{itemize}
            \item If $k^* = 0$, then we get that $(k+K) E_k^\tau \leq \tau K \lambda_0$ for all $0 \leq k \leq k_0$.
            \item Else, we have $k^* \geq 1$, let $M$ be the value of the maximum, so that $(k^* + K) E_{k^*}^\tau = M$ and $(k^*-1+K) E_{(k^*-1)}^\tau \leq M$. Using the inductive definition of $(E_k^\tau)_{k \geq 0}$ this gives
            \[
                (k^* - 1 + K) \frac{E_{k^*}}{(1-E_{k^*})^2} \leq M
            \]
            replacing $E_{k^*}$ by $M / (k^* + K)$ and after algebraic manipulation we obtain
            \[
                (k^* + K - M)^2 M \geq (k^*+K)(k^*+K-1) M
            \]
            either $M = 0$ and there is nothing to prove, or using that $E_{k^*}^\tau \leq 1$, implying that $M \leq k^* + K$ to get rid of the square root, we obtain $M \leq f(k^* + K)$, hence $(k+K) E_k^\tau \leq f(K)$ for all $k \leq k_0$. 
        \end{itemize}
        Now observe that if $f(K) < \tau K \lambda_0$, we must be on the first case, since else we would also have $\tau \lambda_0 K \leq f(K)$. Thus we deduce that in both cases we have
        \[
            E_k^\tau \leq \frac{\max(\tau K \lambda_0, f(K)}{k+K}
        \]
        for all $k \leq k_0$, letting $k_0 \to +\infty$ gives the final result. 
    \end{proof}

    \begin{proof}[Proof of lemma \ref{lemma: Asymptotic_Estimate_Heat_Finite}]
        We want to chose $K$ such attaining the maximum, i.e. such that $g(K) = \tau \lambda_0$, this gives, under the assumptions that $\tau \lambda_0 \leq 1$, $K = g^{-1}(\tau \lambda_0) = \frac{1}{\tau \lambda_0 (2 - \tau \lambda_0)}$. Plugging this value into the previous estimate gives
        \[
            E_k^\tau \leq \frac{\tau \lambda_0}{\tau k \lambda_0 (2 - \tau \lambda_0) + 1}
        \]
        which is what we wanted to show. 
    \end{proof}

\subsection{Preliminary results for the Proof of the Granular-Medium case}

    We shall now prove some preliminary results for the Granular-Medium and Fokker-Planck case. From now on, we assume that at least one of the potential is non-zero. In particular, this implies that $\lambda^*,L^* > 0$. We shall also always assume $\tau < \tau^*$ so that the comparison sequence is well-defined. We also fix $\lambda_0 \in [0,+\infty]$. 

    The first result is a give the behaviour, at $\tau$ fixed, of the comparison sequence as $k \to +\infty$.

    \begin{lemma}
        Define the critical value
        \begin{equation} \label{eq: Critical_Value}
            E_c^\tau = \left ( 1 + \tau \frac{\lambda^* + L^*}{2} \right ) \left ( 1 - \sqrt{1 - 4 \tau \frac{2 \lambda^* + L^*}{(2 + \tau (\lambda^* + L^*))^2}} \right )
        \end{equation}
        for all $\tau < \tau^{**} =: \min(\tau^*, \frac{1}{\lambda^*})$, so that $E_c^\tau$ is a well-defined element of $[0,1)$. 
        Then:
        \begin{itemize}
            \item If $\tau \lambda_0 \in \{0,E_c^\tau \}$, then $E_k^\tau$ is constant.
            \item If $\tau \lambda_0 \in (0,E_c^\tau)$, then $E_k^\tau$ is increasing converging to $E_c^\tau$.
            \item If $\tau \lambda_0 > E_c^\tau$, then $E_k^\tau$ is decreasing converging to $E_c^\tau$. 
        \end{itemize}
    \end{lemma}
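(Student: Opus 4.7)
The plan is to identify $E_c^\tau$ as the unique nontrivial fixed point of $E \mapsto G[E,\tau]$ on $[0,1)$ and then carry out a standard phase-line analysis of the discrete dynamical system $E_{k+1}^\tau = G[\cdot,\tau]^{-1}(E_k^\tau)$. Recall from the existence argument for the comparison sequence that, for $\tau < \tau^*$, the map $G[\cdot,\tau]$ is a strictly increasing diffeomorphism from $[0,1)$ onto $[0,+\infty)$, so its inverse is well-defined and also strictly increasing.

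First I would solve $G[E,\tau] = E$ on $(0,1)$. Clearing denominators reduces this to the quadratic
\[
E^2 - (2 + \tau(\lambda^*+L^*))E + \tau(2\lambda^*+L^*) = 0,
\]
whose roots $E_\pm$ sum to $2 + \tau(\lambda^*+L^*)$ and multiply to $\tau(2\lambda^*+L^*)$; a direct simplification identifies the smaller root with $E_c^\tau$ as defined in \eqref{eq: Critical_Value}. A short computation shows $E_c^\tau < 1$ is equivalent to $\tau\lambda^* < 1$, which is guaranteed by $\tau < \tau^{**}$, while the larger root $E_+ = 2 + \tau(\lambda^*+L^*) - E_c^\tau$ then exceeds $1$. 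Hence the only fixed points of $G[\cdot,\tau]$ in $[0,1)$ are $0$ and $E_c^\tau$. Combining this with the asymptotics $G[E,\tau] - E \sim -\tau(2\lambda^*+L^*)\,E$ as $E \to 0^+$ and $G[E,\tau] \to +\infty$ as $E \to 1^-$, continuity yields the sign pattern
\[
G[E,\tau] < E \text{ on } (0,E_c^\tau), \qquad G[E,\tau] > E \text{ on } (E_c^\tau, 1).
\]

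With these two facts in hand, each case is immediate. The fixed-point cases $\tau\lambda_0 \in \{0,E_c^\tau\}$ follow by induction since $G[\cdot,\tau]^{-1}$ fixes $0$ and $E_c^\tau$. For $\tau\lambda_0 \in (0,E_c^\tau)$, monotonicity of $G[\cdot,\tau]^{-1}$ shows inductively that $E_k^\tau$ stays in $(0,E_c^\tau)$, and the chain $G[E_k^\tau,\tau] < E_k^\tau = G[E_{k+1}^\tau,\tau]$ forces $E_{k+1}^\tau > E_k^\tau$; the resulting monotone increasing sequence bounded above by $E_c^\tau$ must converge to a fixed point in $(0,E_c^\tau]$, necessarily $E_c^\tau$. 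The case $\tau\lambda_0 > E_c^\tau$ (which subsumes $\lambda_0 = +\infty$ via the convention $G[1,\tau] = +\infty$, giving $E_1^\tau = 1$, with subsequent iterates lying in $(E_c^\tau, 1)$) is symmetric: the sequence decreases, remains above $E_c^\tau$, and converges to $E_c^\tau$. There is no real obstacle here; the main bookkeeping is the identification of $E_c^\tau$ with the smaller quadratic root and the strict bound $E_c^\tau < 1$, which also explains why the condition $\tau < \tau^{**}$ (strictly tighter than $\tau < \tau^*$) is needed for the statement to make sense.
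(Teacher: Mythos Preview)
Your proof is correct and follows essentially the same approach as the paper: both reduce the fixed-point equation $G[E,\tau]=E$ to the quadratic $E^2-(2+\tau(\lambda^*+L^*))E+\tau(2\lambda^*+L^*)=0$, use the values at $E=0$ and $E=1$ together with the sum of the roots to locate the smaller root $E_c^\tau$ in $(0,1)$ precisely when $\tau\lambda^*<1$, and then conclude by a standard fixed-point/phase-line argument. Your write-up is in fact somewhat more explicit than the paper's in spelling out the monotone-convergence step for each case.
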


    \begin{proof}
        One compute that
        \[
            G[E,\tau] - E = -\frac{E}{(1-E)^2}(E^2 - (2 + \tau(\lambda^* + L^*)) E + \tau (2 \lambda^* + L^*))
        \]
        which has the reverse sign as the polynomial $P(E) = E^2 - (2 + \tau (\lambda^* + L^*)) E + \tau (2 \lambda^* + L^*)$. Since $P(0) = \tau (2 \lambda^* + L^*) > 0$, and $P(1) = \tau \lambda^* - 1 < 0$ if $\tau < \frac{1}{\lambda^*}$, we have at least one root in $[0,1)$ for $\tau$ small enough, and as the sum of the root is $2 + \tau (\lambda^* + L^*) > 1$, only the smallest one is in this set. This root is then exactly
        \[
            E_c^\tau = \left ( 1 + \tau \frac{\lambda^* + L^*}{2} \right ) \left ( 1 - \sqrt{1 - 4 \tau \frac{2 \lambda^* + L^*}{(2 + \tau (\lambda^* + L^*))^2}} \right )
        \]
        This shows that $G[E,\tau] < E$ on $(0,E_c^\tau)$, $G[E,\tau] > E$ on $(E_c^\tau,1]$ and $G[E,\tau] = E$ on $\{0,E_c^\tau\}$, and we conclude about the monotonicity and limit of $E_k^\tau$ by fixed point argument. 
    \end{proof}

    From now on we will always assume that $\tau < \tau^{**}$. 

    \begin{remark}
        As $\tau \to 0$, one has $E_c^\tau \sim \tau \frac{2 \lambda^* + L^*}{2} = \tau \frac{\Lambda}{2}$, which is the correct behavior one would expect by looking at the continuous case. 
    \end{remark}

    This result shows that $E_k^\tau$ shall be converging to 0 when $\tau \to 0$ and $k \to +\infty$, but to make this observation precise, we shall need some kind of uniform convergence in those two variables. This is the content of the following result.

    \begin{lemma}[Uniform convergence to $0$] \label{lemma: Uniform_Convergence_0}
        Let $\delta > 0$, then there exists $\tau(\delta) > 0$ and $k(\delta) \geq 0$ such that for all $\tau < \tau(\delta)$ and $k \geq k(\delta)$, we have $E_k^\tau \leq \delta$. 
    \end{lemma}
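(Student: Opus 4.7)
The strategy combines two ingredients: a universal upper bound on $E_k^\tau$ valid for all $k$ large enough, and a uniform per-step decrease estimate away from the fixed point. Both leverage the fact, noted just above, that $E_c^\tau \sim \tau \Lambda/2 \to 0$ as $\tau \to 0$.

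Fix $\delta > 0$, which we may assume is $\leq 1/2$. The first step will be to choose $\tau(\delta) \in (0, \tau^{**})$ small enough so that three conditions hold simultaneously: (i) $E_c^\tau \leq \delta/2$; (ii) $G^{-1}[1,\tau] \leq 1/2$, which is possible by continuity since at $\tau = 0$ one computes $G^{-1}[1,0] = (3-\sqrt{5})/2 < 1/2$; and (iii) $\tau(2\lambda^* + L^*) \leq \delta^2/4$. Since $E_k^\tau \in [0,1]$ for every $k \geq 1$ by the defining relations of the comparison sequence, the monotonicity of $G[\cdot,\tau]$ together with the relation $E_{k-1}^\tau = G[E_k^\tau,\tau]$ then forces the universal bound $E_k^\tau \leq G^{-1}[1,\tau] \leq 1/2$ for every $k \geq 2$ and every $\tau \leq \tau(\delta)$.

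The second step is to establish the decrease estimate: for all $E \in [\delta, 1/2]$ and $\tau \leq \tau(\delta)$,
\[
    G[E,\tau] - E \geq \frac{\delta^2}{2}.
\]
This will follow by decomposing $G[E,\tau] - E = (G[E,0] - E) + (G[E,\tau] - G[E,0])$ and estimating both pieces separately. The first piece equals $E^2(2-E)/(1-E)^2$, which is at least $\delta^2$ on $[\delta, 1/2]$. The second piece simplifies to $-\tfrac{E \tau}{(1-E)^2}[(2\lambda^*+L^*) - (\lambda^*+L^*)E]$, bounded in absolute value by $2\tau(2\lambda^*+L^*) \leq \delta^2/2$ on this range by (iii).

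The conclusion will then follow from the monotonicity properties recalled in the preceding lemma. If $E_0^\tau \leq E_c^\tau$, the sequence is non-decreasing and bounded above by $E_c^\tau \leq \delta/2$, so trivially $E_k^\tau \leq \delta$ for all $k$. Otherwise the sequence is non-increasing; from $k = 2$ on it lies in $[E_c^\tau, 1/2]$, and the decrease estimate applied to $E_k^\tau = G[E_{k+1}^\tau,\tau]$ gives $E_k^\tau - E_{k+1}^\tau \geq \delta^2/2$ as long as $E_{k+1}^\tau \in [\delta, 1/2]$. Summing telescopically forces the sequence to drop below $\delta$ in at most $\lceil 1/\delta^2 \rceil$ steps, so taking $k(\delta) := 2 + \lceil 1/\delta^2 \rceil$ works and monotonicity keeps the sequence below $\delta$ thereafter. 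The only mildly delicate point is ingredient (ii), but it is a routine continuity argument: $G$ is smooth and strictly increasing in $E$ near the unique point $E_* \in [0,1)$ where $G[E_*, 0] = 1$, so $G^{-1}[1, \tau]$ varies continuously in $\tau$ near $0$.
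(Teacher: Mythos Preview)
Your argument is correct and takes a genuinely different route from the paper's. The paper first proves a separate monotonicity-in-$\tau$ lemma (Lemma~\ref{lemma: Generalized_Comparison}): if $\tau \leq \eta$ then $E_k^\tau \leq E_k^\eta$ for all $k$. With this in hand, the proof is almost trivial: choose $\tau(\delta)$ so that $E_c^{\tau(\delta)} \leq \delta/2$, use the convergence $E_k^{\tau(\delta)} \to E_c^{\tau(\delta)}$ at that \emph{single} fixed step size to find $k(\delta)$ with $E_k^{\tau(\delta)} \leq \delta$ for $k \geq k(\delta)$, and then the monotonicity in $\tau$ propagates this bound to all smaller $\tau$. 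Your approach bypasses the comparison-in-$\tau$ lemma entirely, replacing it by a direct quantitative per-step decrease $G[E,\tau]-E \geq \delta^2/2$ on $[\delta,1/2]$, after first trapping the sequence in $[0,1/2]$ via $E_2^\tau \leq G^{-1}[1,\tau]$. The paper's method is slicker and isolates a monotonicity fact of independent interest; yours is more self-contained and has the bonus of yielding an explicit $k(\delta) = 2 + \lceil 1/\delta^2 \rceil$ that is uniform in $\lambda_0$, whereas the paper's $k(\delta)$ is obtained abstractly from convergence at $\tau(\delta)$ and in principle depends on $\lambda_0$.
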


    In order to prove this result, we first need the following lemma.

    \begin{lemma} \label{lemma: Generalized_Comparison}
        Suppose $\tau \leq \eta$, then for all $k \geq 0$, one has $E_k^\tau \leq E_k^\eta$. 
    \end{lemma}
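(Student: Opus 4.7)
The plan is to proceed by induction on $k$, driven by the monotonicity of $G[E,\tau]$ in $\tau$ at fixed $E\in[0,1)$. First I would rewrite
\[
    G[E,\tau] = \frac{E}{(1-E)^2}\bigl(1 - \tau(\lambda^*(2-E) + L^*(1-E))\bigr),
\]
from which the computation
\[
    \partial_\tau G[E,\tau] = -\frac{E}{(1-E)^2}\bigl(\lambda^*(2-E) + L^*(1-E)\bigr) \leq 0 \qquad \text{on } [0,1)
\]
is immediate, since $\lambda^*,L^*\geq 0$ and $2-E,1-E>0$. This is the one genuine identity that needs to be checked; the rest of the argument is formal.

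Next I would combine this with the already-established fact (from the proof of the discrete comparison principle) that $G[\,\cdot\,,\tau]$ is an increasing diffeomorphism from $[0,1)$ onto $[0,+\infty)$, to deduce that its inverse $H[\,\cdot\,,\tau] := G[\,\cdot\,,\tau]^{-1}$ is non-decreasing in each of its two arguments separately. Monotonicity in the first slot is immediate from monotonicity of $G$ in $E$; for the second, if $\tau\leq\eta$ and $G[E_0,\tau]=y$, then by the sign of $\partial_\tau G$ one has $G[E_0,\eta]\leq y$, and monotonicity of $G[\,\cdot\,,\eta]$ in $E$ gives $E_0\leq H[y,\eta]$.

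With this in hand the induction is a one-liner. The base case is $E_0^\tau = \tau\lambda_0 \leq \eta\lambda_0 = E_0^\eta$, with the convention that both sides equal $+\infty$ when $\lambda_0=+\infty$ (in which case $E_1^\tau = E_1^\eta = 1$ and one simply starts the induction at $k=1$). For the inductive step, assuming $E_k^\tau\leq E_k^\eta$:
\[
    E_{k+1}^\tau = H[E_k^\tau,\tau] \leq H[E_k^\eta,\tau] \leq H[E_k^\eta,\eta] = E_{k+1}^\eta,
\]
the first inequality using monotonicity of $H$ in its first argument, the second using monotonicity in $\tau$. There is no real obstacle here; the only item that demands minor care is the extended-real initial condition when $\lambda_0=+\infty$, which is pure bookkeeping.
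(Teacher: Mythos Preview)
Your proof is correct and follows essentially the same approach as the paper. Both arguments rest on the two monotonicity facts: $G[\,\cdot\,,\tau]$ is increasing in $E$, and $G[E,\cdot]$ is non-increasing in $\tau$ (the paper phrases the latter as $R[E]\leq 0$ after writing $G[E,\tau]=G[E]+\tau R[E]$, which is exactly your $\partial_\tau G\leq 0$). The only difference is organizational: the paper argues by contradiction, taking a minimal $k$ with $E_k^\eta<E_k^\tau$ and pushing the strict inequality back to $k-1$ via the recursion $E_{k-1}=G[E_k,\cdot]$, whereas you name the inverse and run a direct forward induction. These are equivalent presentations of the same idea. One small remark: the paper already reserves the symbol $H[X,\tau]$ for a different function (the one governing the inverse comparison sequence $X_k^\tau$), so if you integrate your argument you should pick a different name for $G[\,\cdot\,,\tau]^{-1}$.
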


    \begin{proof}
        Let us write $G[E,\tau] = G[E] + \tau R[E]$ with $G[E] = \frac{E}{(1-E)^2}$ and $R[E] = \frac{E}{(1-E)^2}((\lambda^* + L^*) E - (2 \lambda^*+L^*))$. We notice that $R[E] \leq 0$ on $[0,1)$.
        
        We argue by contradiction. We assume that we can find some $k$ such that $E_k^\eta < E_k^\tau$, and consider the minimal such $k$. As $E_0^\tau = \tau \lambda_0 \leq \eta \lambda_0 = E_0^\eta$ we must have $k \geq 1$. But then using that $G$ is increasing we have
        \[
            E_{k-1}^\eta = G[E_k^\eta,\eta] < G[E_k^\tau,\eta] = G[E_k^\tau,\tau] + (\eta-\tau) R[E_k^\tau] = E_{k-1}^\tau + (\eta-\tau) R[E_k^\tau]
        \]
        but as $\eta \geq \tau$, and $R[E_k^\tau] \leq 0$, we deduce that $E_{k-1}^\eta < E_{k-1}^\tau$, which contradicts the minimality of $k$. Hence we deduce that $E_k^\tau \leq E_k^\eta$ for all $k \geq 0$. 
    \end{proof}

    \begin{proof}[Proof of lemma \ref{lemma: Uniform_Convergence_0}]
        As $E_c^\tau \to 0$ when $\tau \to 0$, we can first find $\tau(\delta)$ such that $E_c^{\tau(\epsilon)} \leq \frac{\delta}{2}$. But then using that $E_k^{\tau(\delta)} \to E_c^{\tau(\delta)}$, we can find $k(\delta)$ such that $E_k^{\tau(\delta)} \leq E_c^{\tau(\delta)} + \frac{\delta}{2} \leq \delta$ for all $k \geq k(\delta)$. Then using lemma \ref{lemma: Generalized_Comparison} above, we have that for all $\tau < \tau(\delta)$ and $k \geq k(\delta)$ :
        \[
            E_k^\tau \leq E_k^{\tau(\delta)} \leq \delta
        \]
    \end{proof}
    
\subsection{Proof of proposition \ref{prop: Asymptotic_Estimate_Fokker_Planck_Granular_Medium}: The Granular-Medium Case}

    To prove this case, we shall need to look into another sequence, defined as $X_k^\tau := \frac{\tau}{E_k^\tau}$ (assuming $\lambda_0 \neq 0$). We shall see that this modified sequence can be compared quantitatively to the solution of some ODE as $\tau \to 0$ and $k \tau \sim t$. This estimate will then in turn be used to prove our final proposition. We introduce the following new function:
    \begin{equation}
        H[X,\tau] := \frac{(X-\tau)^2}{[1 - \tau (2 \lambda^* + L^*)] X + \tau^2 (\lambda^* + L^*)}
    \end{equation}
    
    \begin{definition}[Inverse Comparison-Sequence]
        If $\lambda_0 > 0$, we define the inverse comparison sequence by $X_k^\tau := \frac{\tau}{E_k^\tau}$. It satisfies
        \begin{equation}
            \left \{ \begin{array}{ll} 
                X_0 = \frac{1}{\lambda_0} \in [0,+\infty) & \\
                X_k^\tau \in [\tau,+\infty) & \forall k \geq 1 \\
                H[X_{k+1}^\tau,\tau] = X_k^\tau & \forall k \geq 0
            \end{array} \right .
        \end{equation}      
    \end{definition}

    The fundamental observation is that one has $H[X,0] = X$, and $\partial_\tau H[X,0] = \Lambda X - 2$. If we linearize we then expect that $X_k^\tau \simeq X_{k+1}^\tau + \tau \partial_\tau H[X_{k+1}^\tau,0]$, hence if $X_t^\tau$ is the piecewise constant interpolation of the values of $X_k^\tau$, one has have
    \[
        \dot{X}_t^\tau \simeq \frac{X_{k+1}^\tau - X_k^\tau}{\tau} \simeq -\partial_\tau H[X_t^\tau,0] = 2 - \Lambda X_t^\tau
    \]
    hence $X_t^\tau$ should not be too far from the solution to $\dot{X}_t = 2 - \Lambda X_t$, which is exactly the equation we obtained in the continuous case. 

    Our aim is thus to prove rigorously this fact. To do so we shall introduce $R[X,\tau] := H[X,\tau] - X - \tau \partial_\tau H[X,0]$, measuring the error we make in our linearization. We then have the following estimate

    \begin{lemma}[Estimating the rest]
        There exists a constant $C > 0$ such that for all $\tau \ll 1$, and $X > 0$
        \begin{equation}
            |R[X,\tau]| \leq C \tau^2 \frac{1+X^2}{X}
        \end{equation}
    \end{lemma}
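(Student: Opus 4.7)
The plan is a direct Taylor-in-$\tau$ expansion of $H[X,\tau]$ with explicit second-order remainder. I first check the base values: $H[X,0] = X^2/X = X$, and the quotient rule at $\tau=0$ gives
\[
    \partial_\tau H[X,0] = \frac{-2X\cdot X - X^2 \cdot (-\Lambda X)}{X^2} = \Lambda X - 2,
\]
which is consistent with the informal linearization $\dot X_t = 2 - \Lambda X_t$ derived just above the statement.

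Next, I place $R[X,\tau] = H[X,\tau] - X - \tau(\Lambda X - 2)$ over the common denominator $D(X,\tau) := X(1 - \Lambda \tau) + (\lambda^* + L^*)\tau^2$ and expand the numerator
\[
    N(X,\tau) := (X - \tau)^2 - \bigl[X + \tau(\Lambda X - 2)\bigr]\, D(X,\tau).
\]
By construction of $R$, every contribution of order $\tau^0$ and $\tau^1$ in $N$ must cancel. A routine expansion (grouping terms by powers of $\tau$) confirms this and yields
\[
    N(X,\tau) = \tau^2\bigl[1 - \bigl((\lambda^*+L^*) + 2\Lambda\bigr)X + \Lambda^2 X^2\bigr] + \tau^3(\lambda^*+L^*)\bigl[2 - \Lambda X\bigr].
\]
Taking absolute values and using $\tau \leq 1$ to absorb $\tau^3$ into $\tau^2$, one gets $|N(X,\tau)| \leq \tau^2 P(X)$ for an explicit polynomial $P(X) = c_0 + c_1 X + c_2 X^2$ whose coefficients depend only on $\lambda^*$ and $L^*$.

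For the denominator, I restrict to $\tau$ small enough that $\Lambda \tau \leq 1/2$ (this is the quantitative content of ``$\tau \ll 1$''), giving $D(X,\tau) \geq X(1-\Lambda\tau) \geq X/2$. Dividing,
\[
    |R[X,\tau]| \leq \frac{2\tau^2 P(X)}{X} = 2\tau^2\left(\frac{c_0}{X} + c_1 + c_2 X\right).
\]
The proof then concludes with the elementary observation that $(1+X^2)/X = 1/X + X \geq 2$ for every $X > 0$ (AM--GM), so each of $1/X$, $1$ and $X$ is dominated by a constant multiple of $(1+X^2)/X$; summing gives the claim with $C = 2(c_0 + c_1 + c_2)$.

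There is no real obstacle here: the computation is mechanical and no clever cancellation is needed once the correct linearization $\Lambda X - 2$ is used. The only thing worth watching is the small-$X$ regime, where the crude lower bound $D \geq X/2$ is very far from sharp (since the $(\lambda^*+L^*)\tau^2$ term dominates); however the $1/X$ factor already built into the right-hand side of the target inequality gives enough slack that this loss is harmless, so no refined estimate on $D$ is required.
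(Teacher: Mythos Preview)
Your argument is correct and follows essentially the same route as the paper: put $R$ over the common denominator $D(X,\tau)$, check that the $\tau^0$ and $\tau^1$ contributions in the numerator cancel, bound the denominator from below by $X/2$ when $\tau \leq \frac{1}{2(2\lambda^*+L^*)}$, and bound the remaining quadratic-in-$X$ numerator by $C(1+X^2)$. Your explicit expansion of $N(X,\tau)$ is in fact cleaner than the paper's (the paper's displayed formula seems to have a misprint, writing $\Lambda^2 X$ where your $\Lambda^2 X^2$ is correct).
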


    \begin{proof}
        We have the following explicit expression:
        \[
            R[X,\tau] = \tau^2 \frac{1 - 2 \Lambda X + \Lambda^2 X - \beta X + 2 \tau \beta - \tau \beta \Lambda X}{[1 - \tau (2 \lambda^* + L^*)] X + \tau^2 (\lambda^* + L^*)}
        \]
        where $\beta = \lambda^* + X^*$
        The denominator can be bounded from below by $\frac{1}{2} X$ as long as $\tau < \frac{1}{2 (2 \lambda^* + L^*)}$ for instance. Taking $\tau \leq 1$ allows to bound the numerator by $C (1 + X^2)$ for some large enough constant depending only on $\Lambda,\beta$, from which the result follows. 
    \end{proof}

    We can then use this estimate to derive the following quantitative linearization of the induction

    \begin{proposition} \label{prop: Comparison_With_ODE}
        Let $X_k^\tau$ be a modified comparison sequence starting from $\lambda_0 > 0$. Suppose that for some $k_0 \geq 0$, one has $0 < m \leq X_k^\tau \leq M$ for all $k \geq k_0$. Let $X_t$ be the solution to $\dot{X}_t = 2 - \Lambda X_t$ with initial data $X_0 = X_{k_0}^\tau$. Then for all $k \geq k_0$ we have
        \begin{equation}
            |X_{(k-k_0) \tau} - X_k^\tau| \leq \Lambda \tau \left | X_{k_0}^\tau - \frac{2}{\Lambda} \right | + \tau C \frac{1+M^2}{\Lambda m}
        \end{equation}
        for some constant $C$ depending only on $\lambda^*,L^*$. 
    \end{proposition}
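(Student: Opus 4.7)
The plan is to linearize the recursion $H[X_{k+1}^\tau,\tau] = X_k^\tau$ around the continuous linear ODE $\dot{X}=2-\Lambda X$ and then control the two resulting sources of error separately, using the dissipation $\Lambda>0$ to prevent accumulation over iterations. By the definition of $R$ one has $H[X,\tau] = X + \tau(\Lambda X - 2) + R[X,\tau]$, so substituting $u_k := X_k^\tau - 2/\Lambda$ recasts the recursion as
\[
    u_{k+1} = \frac{u_k - R[X_{k+1}^\tau,\tau]}{1+\tau\Lambda},
\]
which one recognizes as a perturbed implicit Euler step for $\dot{u} = -\Lambda u$. On the continuous side the exact solution with $X_0 = X_{k_0}^\tau$ satisfies $v_t := X_t - 2/\Lambda = u_{k_0} e^{-\Lambda t}$, and its sampled values $w_j := v_{(j-k_0)\tau}$ obey the clean recursion $w_{j+1} = w_j e^{-\Lambda\tau}$.

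Setting $\epsilon_k := u_k - w_k$, so that $\epsilon_{k_0}=0$, and subtracting the two recursions yields
\[
    \epsilon_{k+1} = \frac{\epsilon_k}{1+\tau\Lambda} + w_k\Bigl(\frac{1}{1+\tau\Lambda} - e^{-\Lambda\tau}\Bigr) - \frac{R[X_{k+1}^\tau,\tau]}{1+\tau\Lambda}.
\]
Solving this first-order linear recursion explicitly expresses $\epsilon_k$ as a sum of two contributions: one coming from the mismatch between the implicit Euler step and the true exponential on each step (weighted by $w_j$), and one coming from the remainder term $R$. The claimed estimate then follows by the triangle inequality.

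The mismatch contribution is bounded using the elementary expansion $|1-(1+\tau\Lambda)e^{-\Lambda\tau}| = \frac{(\tau\Lambda)^2}{2} + O(\tau^3)$ together with $|w_j| = |u_{k_0}|e^{-\Lambda\tau(j-k_0)}$; the resulting geometric sum is bounded by $|u_{k_0}|\cdot\frac{(\tau\Lambda)^2}{2}\cdot(1-e^{-\Lambda\tau})^{-1}$, and since $(1-e^{-\Lambda\tau})^{-1}\leq 1/(\Lambda\tau) + O(1)$ this yields the first term $\Lambda\tau|u_{k_0}|$ for $\tau$ small enough. The $R$-contribution is bounded by combining the remainder estimate $|R[X_{j+1}^\tau,\tau]|\leq C\tau^2(1+M^2)/m$ (which uses directly the assumed bounds $m\leq X_j^\tau\leq M$) with the geometric sum $\sum_{l\geq 1}(1+\tau\Lambda)^{-l} = 1/(\tau\Lambda)$, giving the second term $C\tau(1+M^2)/(\Lambda m)$.

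The main point, and really what makes the argument work, is that both per-step errors are $O(\tau^2)$ but the contraction factor $(1+\tau\Lambda)^{-1}$ exactly dampens the $1/(\tau\Lambda)$ blow-up of the geometric summation, so the total error is of order $\tau$ rather than the naive $O((k-k_0)\tau^2)$; this is why the hypothesis $\Lambda > 0$ enters in an essential way. There is no serious obstacle; the only care required is to arrange the constant in the mismatch term so that it lands with the clean prefactor $\Lambda\tau$ as stated, which works because the leading order of $|1-(1+\tau\Lambda)e^{-\Lambda\tau}|$ is $(\tau\Lambda)^2/2$ and the factor $1/2$ is absorbed in the small-$\tau$ regime.
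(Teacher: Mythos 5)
Your argument is correct and rests on the same core mechanism as the paper's: linearize $H$ around $\tau=0$ to obtain the per-step error $R$, exploit the contraction factor $(1+\tau\Lambda)^{-1}$, and observe that the $O(\tau^2)$ per-step errors sum to $O(\tau)$ via the geometric series $\sum_l(1+\tau\Lambda)^{-l}=1/(\tau\Lambda)$. The paper reaches the same conclusion by comparing $X_k^\tau$ to $X_{(k-k_0)\tau}$ directly via a mean-value expansion and the discrete Grönwall lemma \ref{lemma: Discrete_Gronwall_Lemma}, whereas you shift to $u_k = X_k^\tau - 2/\Lambda$, compare with the exact exponential $w_j$, and solve the resulting affine recursion explicitly; these are two bookkeepings of the same estimate, and your route is arguably the cleaner of the two since it bypasses the auxiliary Grönwall lemma.
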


    \begin{proof}
        Define $\Delta_k^\tau := |X_{(k-k_0) \tau} - X_k^\tau|$. We recall that
        \[
            X_k^\tau = H[X_{k+1}^\tau,\tau] = X_{k+1}^\tau + \tau (\Lambda X_{k+1}^\tau - 2) + R[X_{k+1}^\tau,\tau]
        \]
        On the other hand, there exists $\xi_k^\tau \in [(k-k_0) \tau, (k+1-k_0) \tau)$ such that 
        \[
            X_{(k+1-k_0) \tau} = X_{(k-k_0) \tau} + \tau \dot{X}_{\xi_k^\tau} = X_{(k-k_0) \tau} + 2 \tau - \tau \Lambda X_{\xi_k^\tau} 
        \]
        Combining these relations we obtain
        \begin{align*}
            X_k^\tau - X_{(k-k_0) \tau} = (1 + \tau \Lambda) (X_{k+1}^\tau - X_{(k+1-k_0) \tau}) + R[X_{k+1}^\tau, \tau] + \tau \Lambda [X_{(k+1-k_0) \tau} - X_{\xi_k^\tau}] 
        \end{align*}
        this gives the bound
        \begin{align*}
            (1 + \tau \Lambda) \Delta_{k+1}^\tau &\leq \Delta_k^\tau + |R[X_{k+1}^\tau,\tau]| + \tau \Lambda |X_{(k+1-k_0) \tau} - X_{\xi_k^\tau}| \\
            &\leq \Delta_k^\tau + C \tau^2 \frac{1+M^2}{m} + \Lambda \tau^2 \left | X_{k_0}^\tau - \frac{2}{\Lambda} \right |
        \end{align*}
        where we use the explicit formula $X_t = \left ( X_{k_0}^\tau - \frac{2}{\Lambda} \right ) e^{-\Lambda t} + \frac{2}{\Lambda}$ which implies that $X_t$ is $\left | X_{k_0}^\tau - \frac{2}{\Lambda} \right |$ to bound the last term. 

        One can then use the discrete Grönwall lemma \ref{lemma: Discrete_Gronwall_Lemma} below, we deduce that
        \[
            \Delta_k^\tau \leq \left ( \Lambda \tau \left | X_{k_0}^\tau - \frac{2}{\Lambda} \right | + \tau C \frac{1+M^2}{m \Lambda} \right ) \left ( 1 - \left ( \frac{1}{1 + \Lambda \tau} \right )^{k-k_0} \right ) 
        \]
        if we bound crudely the last term by $1$ we obtain the result. 
    \end{proof}

    \begin{lemma}[Discrete Grönwall Lemma] \label{lemma: Discrete_Gronwall_Lemma}
        Let $u_k$ be a sequence such that for some constant $\alpha > 0, A \geq 0$ one has
        \begin{equation}
            (1 + \alpha) u_{k+1} \leq u_k + A
        \end{equation}
        and $u_0 = 0$, then 
        \begin{equation}
            u_k \leq \frac{A}{\tau} \left ( 1 - \left ( \frac{1}{1+\alpha} \right )^k \right )
        \end{equation}
    \end{lemma}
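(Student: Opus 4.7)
The plan is to view the recurrence as a contracting linear iteration and iterate it explicitly. First, I would rewrite the assumption by dividing through by $1+\alpha$, obtaining the equivalent inequality $u_{k+1} \leq \beta u_k + \beta A$ with $\beta := (1+\alpha)^{-1} \in (0,1)$, which makes the geometric structure manifest.

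Next, I would prove by induction on $k$ that $u_k \leq \beta A \sum_{i=0}^{k-1} \beta^i$. The base case $k=0$ uses the hypothesis $u_0 = 0$. For the inductive step, applying the rewritten inequality and the induction hypothesis gives
\begin{equation*}
u_{k+1} \leq \beta u_k + \beta A \leq \beta \cdot \beta A \sum_{i=0}^{k-1} \beta^i + \beta A = \beta A \sum_{i=0}^{k} \beta^i,
\end{equation*}
which closes the induction.

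Finally, I would evaluate the finite geometric sum $\sum_{i=0}^{k-1} \beta^i = \frac{1-\beta^k}{1-\beta}$. Using $1 - \beta = \alpha/(1+\alpha) = \alpha \beta$, this simplifies the bound to $u_k \leq \frac{A}{\alpha}\bigl(1 - (1+\alpha)^{-k}\bigr)$, which matches the conclusion of the lemma (the $\tau$ in the stated denominator appears to be a typographical slip for $\alpha$, as no $\tau$ enters the hypotheses). No step presents any real obstacle; the only mild subtlety is bookkeeping the factor $\beta$ when converting $1/(1-\beta)$ back to $1/\alpha$.
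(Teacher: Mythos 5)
Your proof is correct and reaches the same bound by essentially the same elementary computation: the paper subtracts the fixed point $A/\alpha$ to reduce to a pure geometric decay, whereas you unroll the affine recurrence directly and sum the geometric series, but these are cosmetically different presentations of the same argument. You are also right that the $\tau$ in the stated conclusion is a typographical slip for $\alpha$.
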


    \begin{proof}
        This is classical. Consider $v_k := u_k - \frac{A}{\alpha}$, then $(1+\alpha) v_{k+1} \leq v_k$, so that $v_k \leq \frac{1}{(1+\alpha)^k} v_0 = -\frac{1}{(1+\alpha)^k} \frac{A}{\alpha}$. Replacing $v_k$ by its value gives the result. 
    \end{proof}

    We can finally prove proposition \ref{prop: Asymptotic_Estimate_Fokker_Planck_Granular_Medium}. 

    \begin{proof}[Proof of proposition \ref{prop: Asymptotic_Estimate_Fokker_Planck_Granular_Medium}]

        We consider the cases $\lambda_0 < +\infty$ and $\lambda_0 = +\infty$ separately. 

        In all the proof, $C$ will denote a constant depending only on $\lambda_0,\lambda^*,L^*$, which might change from line to line. 

        \begin{itemize} 
        
            \item \emph{The case $\lambda_0 < +\infty$: } If $\lambda_0 = 0$ there is nothing to prove, else we can work with the inverse comparison sequence starting from $\lambda_0$. Since $\lambda_0 < +\infty$, the sequence $E_k^\tau$ remains bounded between $\tau \lambda_0$ and $E_c^\tau$. As $\frac{1}{\tau} E_c^\tau \to \frac{\Lambda}{2}$, we deduce that $(X_k^\tau)_k$ remains bounded away from $0$ and $+\infty$ uniformly on $\tau$. Consider $X_t$ solution to $\dot{X}_t = 2 - \Lambda X_t$ with initial data $X_0 = \frac{1}{\lambda_0}$. Then by proposition \ref{prop: Comparison_With_ODE}, using the uniform bounds from above and below for $(X_k^\tau)_k$, then for all $k \geq 0$:
            \[
                X_k^\tau \geq X_{k \tau} - \tau C 
            \]
            
            Now let $t \geq 0$, and consider $k \geq 0$ such that $t \in [k \tau, (k+1) \tau)$, using that $X_t$ is Lipschitz, we have $X_k^\tau \geq X_t - \tau C$. Let $m$ be such that $X_t \geq m > 0$ for all $t \geq 0$ (which exists as $\lambda_0 > 0$). Fix $\epsilon > 0$, then 
            \[
                \frac{X_t- \tau C}{X_t} = 1 - \frac{\tau C}{X_t} \geq 1 - \frac{\tau C}{m} \geq \frac{1}{1+\epsilon}
            \]
            for all $\tau < \tau(\epsilon)$ small enough. We deduce that
            \[
                \frac{1}{\tau} E_t^\tau = \frac{1}{\tau} E_k^\tau \leq \frac{1+\epsilon}{X_t} = (1+\epsilon) \frac{\Lambda \lambda_0}{\Lambda e^{-\Lambda t} + 2 \lambda_0 (1-e^{-\Lambda t})}
            \]
            for all $t \geq 0$ and $\tau < \tau(\epsilon)$, which concludes. 

            \item \emph{The case $\lambda_0 = +\infty$: } In this case, we have the inequality $X_k^\tau \leq \frac{\tau}{E_c^\tau}$, which gives an uniform upper bound on $X_k^\tau$. The lower bound is a bit trickier since we have $X_0^\tau = 0$. 
            
            To solve this issue, let's consider $\delta > 0$ to be chosen later on. By lemma \ref{lemma: Uniform_Convergence_0}, we can find $\tau(\delta) > 0$ and $k(\delta) \geq 0$ such that $X_k^\tau \geq \frac{\tau}{\delta}$ for all $k \geq k(\delta)$ and $\tau < \tau(\delta)$. Let's consider $X_t$ the solution to $\dot{X}_t = 2 - \Lambda X_t$ starting from $X_{k(\delta)}^\tau$. Then using proposition \ref{prop: Comparison_With_ODE} (eventually modifying $\tau(\delta)$ for it to be smaller) with some $M$ given by an upper bound on $X_c^\tau$, and $m = \frac{\tau}{\delta}$, then for all $k \geq k(\delta)$ 
            \[
                |X_{(k-k(\delta)) \tau} - X_k^\tau| \leq \Lambda \tau \left ( |X_{k_0}^\tau| + \frac{2}{\Lambda} \right ) + \tau C \frac{1+M^2}{m} \leq C (\tau + \delta)
            \]
            Furthermore, let's consider $t \in [k \tau, (k+1) \tau)$, then 
            \[
                X_{(k-k(\delta)) \tau} = (X_{k(\delta)}^\tau - \frac{2}{\Lambda}) e^{-\Lambda (k-k(\delta) \tau} + \frac{2}{\Lambda} \geq \frac{2}{\Lambda}(1 - e^{-\Lambda t}) - \Lambda k(\delta) \tau
            \]
            Hence we end up with
            \[
                X_k^\tau \geq \frac{2}{\Lambda}(1-e^{-\Lambda t}) - C (\tau + \delta + \tau k(\delta))
            \]
            for all $k \geq k(\delta), \tau \leq \tau(\delta)$, and $t \in [k \tau, (k+1) \tau)$. 

            Now consider $\epsilon > 0$ and $t_0 > 0$. As $\frac{2}{\Lambda}(1-e^{-\Lambda t})$ is bounded uniformly from below on $[t_0,+\infty)$, we can find $\eta > 0$, depending on $t_0,\lambda^*,L^*,\epsilon$ such that $\frac{1}{1+\epsilon} \frac{2}{\Lambda}(1-e^{-\Lambda t}) \leq \frac{2}{\Lambda}(1-e^{-\Lambda t}) - 3 \eta$, we shall first chose $\delta$ such that $C \delta \leq \eta$, then we chose $\tau$ small enough such that $C k(\delta) \tau \leq \eta$,  $C \tau \leq \eta$, $\tau < \tau(\delta)$ and $\tau k(\delta) \leq t_0$. Then if $t \in [t_0,+\infty)$, and $k$ is such that $t \in [k \tau, (k+1) \tau)$, we must have $k \geq k(\delta)$, therefore we obtain
            \begin{align*}
                X_k^\tau &\geq \frac{2}{\Lambda}(1-e^{-\Lambda t}) - C (\tau + \delta + \tau k(\delta)) \\
                &\geq \frac{2}{\Lambda}(1-e^{-\Lambda t}) - 3 \eta \\
                &\geq \frac{1}{1 + \epsilon} \frac{2}{\Lambda}(1-e^{-\Lambda t})
            \end{align*}
            this shows that
            \[
                E_t^\tau = \frac{1}{\tau} E_k^\tau \leq (1+\epsilon) \frac{\Lambda}{2(1-e^{-\Lambda t})}
            \]
            for all $\tau < \tau(\epsilon,t_0)$ and $t \geq t_0$, concluding the proof.
        \end{itemize}
    \end{proof}

\end{document}